\newcommand{\bd}{\begin{description}}
\newcommand{\ed}{\end{description}}
\newcommand{\bi}{\begin{itemize}}
\newcommand{\ei}{\end{itemize}}
\newcommand{\be}{\begin{enumerate}}
\newcommand{\ee}{\end{enumerate}}
\newcommand{\beq}{\begin{equation}}
\newcommand{\eeq}{\end{equation}}
\newcommand{\beqs}{\begin{eqnarray*}}
\newcommand{\eeqs}{\end{eqnarray*}}
\definecolor{DarkGreen}{rgb}{0.2, 0.6, 0.3}
\newtheorem{theorem}{Theorem}[section]
\newtheorem{lemma}{Lemma}[section]
\newtheorem{definition}{Definition}
\newtheorem{corollary}[theorem]{Corollary}
\newtheorem{case}{Case}
\newtheorem{claim}{Claim}
\newtheorem{proposition}{Proposition}[section]
\newtheorem{example}{Example}
\newtheorem{observation}{Observation}[section]
\begin{document}
\title{\textbf{On the distance-edge-monitoring numbers of graphs} \footnote{Supported by the National
Science Foundation of China (Nos. 12061059, 11601254, 11551001,
11161037, 61763041, 11661068, and 11461054), the Qinghai Key
Laboratory of Internet of Things Project (2017-ZJ-Y21) and
Science \& Technology development Fund of Tianjin Education Commission for Higher Education, China (2019KJ090).
} }

\author{Chengxu Yang
\footnote{School of Computer, Qinghai Normal
University, Xining, Qinghai 810008, China. {\tt
cxuyang@aliyun.com}},
 \ \ Ralf
Klasing, \footnote{Corresponding author: Universit\'{e} de Bordeaux, Bordeaux INP, CNRS, LaBRI, UMR 5800, Talence, France.
{\tt ralf.klasing@labri.fr}}
 \ \ Yaping
Mao, \footnote{Academy of Plateau Science and Sustainability, Xining,
Qinghai 810008, China.
{\tt maoyaping@ymail.com}}
\ \ Xingchao Deng
\footnote{School of Mathematical Science, Tianjin Normal University, Tianjin, 300387, China. {\tt dengyuqiu1980@126.com}}
}
\date{}
\maketitle

\begin{abstract}
Foucaud {\it et al.}~[{\it Discrete Appl.~Math.}~319 (2022), 424–438] recently introduced and initiated the study of a new graph-theoretic concept in the area of network monitoring.
For a set $M$ of vertices and an edge $e$ of a graph $G$, let $P(M, e)$ be the set of pairs $(x, y)$ with a vertex $x$ of
$M$ and a vertex $y$ of $V(G)$ such that $d_G(x, y)\neq  d_{G-e}(x, y)$. For a vertex $x$, let $EM(x)$ be the set of edges $e$ such that there exists a vertex $v$ in $G$ with $(x, v) \in  P(\{x\}, e)$. A set $M$ of vertices of a graph $G$ is \emph{distance-edge-monitoring set} if every edge $e$ of $G$ is monitored by some vertex of $M$,
that is, the set $P(M, e)$ is nonempty. The \emph{distance-edge-monitoring number} of a graph $G$,
denoted by $dem(G)$,
is defined as the smallest size of distance-edge-monitoring sets of $G$.
The vertices of $M$ represent distance probes in a network modeled by $G$; when the edge $e$ fails, the distance from $x$ to $y$ increases, and thus we are able to detect the failure. It turns out that not only we can detect it, but we can even correctly locate the failing edge.
In this paper, we continue the study of \emph{distance-edge-monitoring sets}. In particular,
we give upper and lower bounds of $P(M,e)$,
$EM(x)$, $dem(G)$, respectively, and extremal graphs attaining the bounds are characterized.
We also characterize the graphs with $dem(G)=3$.\\[2mm]
{\bf Keywords:}
Distance; Metric dimension;
Distance-edge-monitoring set.\\[2mm]
{\bf AMS subject classification 2020:} 05C12; 11J83; 35A30; 51K05.
\end{abstract}

\section{Introduction}

Foucaud {\it et al.}~\cite{FKKMR21} recently introduced a new concept of network monitoring using distance probes, called \emph{distance-edge-monitoring}. Networks are naturally modeled by finite undirected simple connected graphs, whose vertices represent computers and whose edges represent connections between them. We wish to be able to monitor the network in the sense that when a connection (an edge) fails, we can detect this failure. We will select a (hopefully) small set of vertices of the network, that will be called \emph{probes}. At any given moment, a probe of the network can measure its graph distance to any other vertex of the network. The goal is that, whenever some edge of the network fails, one of the measured distances changes, and thus the probes are able to detect the failure of any edge.
Probes that measure distances in graphs are present in real-life networks, for instance this is useful in the fundamental task of \emph{routing}~\cite{DABVV06,GT00}. They are also frequently used for problems concerning \emph{network verification}~\cite{BBDGKP15,BEEHHMR06,BEMW10}.

We will now present the formal definition of the concept of \emph{distance-edge-monitoring sets}, as introduced by Foucaud {\it et al.}~\cite{FKKMR21}.
Graphs considered are finite, undirected and simple.
Let $G=(V,E)$ be a graph with vertex set $V$ and edge set $E$, respectively.
We denote by $d_G(x,y)$ the distance between two vertices $x$ and $y$ in a graph $G$.
For an edge $e$ of $G$, we denote by $G-e$ the graph obtained by deleting $e$ from $G$. 


\begin{definition}\label{Defination:$P(M, e)$}
For a set $M$ of vertices and an edge $e$ of a graph $G$, let $P(M, e)$ be the set of pairs $(x, y)$ with a vertex $x$ of
$M$ and a vertex $y$ of $V(G)$ such that $d_G(x, y)\neq  d_{G-e}(x, y)$. In other words, $e$ belongs to all shortest paths between $x$ and $y$
in $G$.
\end{definition}

\begin{definition}
For a vertex $x$, let $EM(x)$ be the set of edges $e$ such that there exists a vertex $v$ in $G$ with $(x, v) \in  P(\{x\}, e)$, that is
$EM(x)=\{e\,|\,e \in E(G) \textrm{~and~ }
\exists v \in V(G)\textrm{~such that~}
d_G(x,v)\neq d_{G-e}(x,v)\},$
or $EM(x)=\{e\,|\,e \in E(G) \textrm{and }
P(\{x\}, e) \neq \emptyset \}$.
If $e \in EM(x)$,
we say that \emph{$e$ is monitored by $x$}.
\end{definition}

\begin{definition}
A set $M$ of vertices of a graph $G$ is \emph{distance-edge-monitoring set} if every edge $e$ of $G$ is monitored by some vertex of $M$,
that is, the set $P(M, e)$ is nonempty. Equivalently, $\bigcup\limits_{x\in M}EM(x)=E(G)$.
\end{definition}

One may wonder about the existence of such an edge detection set $M$. The
answer is affirmative. If
we take $M=V(G)$, then
$$
E(G) \subseteq  \bigcup\limits_{x\in V(G)}N(x) \subseteq \bigcup\limits_{x\in V(G)}EM(x).
$$
Therefore, we consider the smallest cardinality of $M$ and give the following parameter.

\begin{definition}
The \emph{distance-edge-monitoring number} $dem(G)$ of a graph $G$ is defined as the smallest size of a \emph{distance-edge-monitoring set} of $G$, that is
$$
dem(G)=\min\left\{|M|\Big| \bigcup_{x\in M}EM(x)=E(G)\right\}.
$$
\end{definition}

The vertices
of $M$ represent distance probes in a network modeled by $G$, \emph{distance-edge-monitoring sets} are very effective in network fault tolerance testing. For example, a distance-edge-monitoring set can detect a failing edge, and
it can correctly locate the failing edge by distance from $x$ to $y$, because the
distance from $x$ to $y$ will increases when the edge $e$ fails.
Concepts related to {\it distance-edge-monitoring sets} have been considered e.g.~in
\cite{BBDGKP15,BBKS17,BEEHHMR06,BR06,HM76,mixedMD,edgeMD,geodetic,strong-resolving,ST04,S75}.
A detailed discussion of these concepts can be found in \cite{FKKMR21}.

Foucaud {\it et al.}~\cite{FKKMR21} introduced and initiated the study of distance-edge-monitoring sets.
They showed that
for a nontrivial connected graph $G$ of order $n$, $1\leq dem(G)\leq n-1$ with $dem(G)=1$ if and only if $G$ is a tree,
and $dem(G)=n-1$ if and only if it is a complete graph. They derived the exact value of $dem$ for grids, hypercubes, and complete bipartite graphs.
Then, they related $dem$ to other standard graph parameters. They showed that $dem(G)$ is lower-bounded by the arboricity of the graph, and upper-bounded by its vertex cover number. It is also upper-bounded by twice its feedback edge set number. Moreover, they characterized connected graphs $G$ with $dem(G)=2$.
Then, they showed that determining $dem(G)$ for an input graph $G$ is an NP-complete problem, even for apex graphs. There exists a polynomial-time logarithmic-factor approximation algorithm, however it is NP-hard to compute an asymptotically better approximation, even for bipartite graphs of small diameter and for bipartite subcubic graphs. For such instances, the problem is also unlikely to be fixed parameter tractable when parameterized by the solution size.

In this paper, we continue the study of \emph{distance-edge-monitoring sets}. In particular,
we give upper and lower bounds of $P(M,e)$,
$EM(x)$, $dem(G)$, respectively, and extremal graphs attaining the bounds are characterized.
We also characterize the graphs with $dem(G)=3$.

\section{Preliminaries}

Graphs considered are finite, undirected and simple.
Let $G=(V,E)$
be a graph with vertex set $V$
and edge set $E$, respectively.
The \emph{neighborhood
set} of a vertex $v\in V(G)$ is $N_G(v)=\{u\in V(G)\,|\,uv\in
E(G)\}$.
Let $N_G[v]=N_G(v)\cup \{v\}$.
The \emph{degree} of a vertex $v$ in $G$ is denoted by
$d(v)=|N_{G}(v)|$. $\delta(G)$, $\Delta(G)$ is the minimum,
maximum degree of the graph $G$, respectively.
For a vertex subset
$S\subseteq V(G)$, the subgraph induced by $S$ in $G$ is denoted by
$G[S]$ and similarly $G[V\setminus S]$ for $G\setminus S$ or $G-S$.
$v^{k+}$ is a vertex $v$ whose degree is at least $k$.
In a graph $G$, a vertex is a \emph{core vertex} if it is $v^{3+}$.
A path with all internal vertices of degree $2$ and whose end-vertices are \emph{core vertices} is called a \emph{core path} (note that we
allow the two end-vertices to be equal, but all other vertices must be distinct). A core path that is a cycle (that is, both
end-vertices are equal) is a \emph{core cycle}.
The \emph{base graph $G_b$} of a graph $G$ is the graph obtained from $G$ by iteratively removing
vertices of degree $1$. Clearly, $dem(G) = dem(G_b)$.

Foucaud {\it et al.}~\cite{FKKMR21} showed that
$1 \leq dem(G) \leq n-1$ for any $G$ with order $n$, and characterized graphs with $dem(G)=1,2,n-1$.

\begin{theorem}{\upshape\cite{FKKMR21}}\label{th-dem-1}
Let $G$ be a connected graph with at least one edge. Then $dem(G) = 1$ if and only if $G$ is a tree.
\end{theorem}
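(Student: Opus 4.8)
The plan is to prove the two directions separately, handling the ``only if'' direction by its contrapositive.

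For the ``if'' direction (every tree has $dem(G)=1$), I would fix an arbitrary vertex $x$ and argue that the singleton $M=\{x\}$ monitors every edge. Given any edge $e=uv$ of the tree, $e$ is a bridge, so $G-e$ has exactly two components; choosing a vertex $y$ in the component not containing $x$ gives $d_{G-e}(x,y)=\infty \neq d_G(x,y)$, hence $(x,y)\in P(\{x\},e)$ and $e\in EM(x)$. This shows $dem(G)\le 1$, and since $G$ has at least one edge we trivially have $dem(G)\ge 1$, so $dem(G)=1$.

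For the ``only if'' direction I would prove the contrapositive: if the connected graph $G$ is not a tree, then $dem(G)\ge 2$, i.e.\ no single vertex $x$ can monitor all of $E(G)$. Since $G$ is connected and not a tree, it contains a cycle $C=v_0v_1\cdots v_{k-1}v_0$. Fix any candidate monitor $x$ and record the distances $d_i=d_G(x,v_i)$; for adjacent vertices consecutive values differ by at most $1$. I would then locate an unmonitored edge of $C$ by examining how these distances behave, splitting into two cases according to whether some edge of $C$ joins two vertices equidistant from $x$.

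The first case, an edge $e=v_iv_{i+1}$ with $d_i=d_{i+1}$, is the easy one: such an edge lies on no shortest path emanating from $x$ (along a shortest path the distance from $x$ strictly increases), so deleting it changes no distance from $x$ and $e\notin EM(x)$. The main obstacle is the complementary case, where every edge of $C$ joins vertices at different distances from $x$; then the distance sequence around $C$ has a strict local maximum at some vertex $v_j$, with both cycle-neighbours $v_{j-1},v_{j+1}$ at distance $d_j-1$. Here I must show that the edge $e=v_{j-1}v_j$ is unmonitored even though it does lie on a shortest $x$-$v_j$ path. The key point is that $v_j$ is also reachable by a shortest path ending $\cdots v_{j+1}v_j$ that never traverses $e$; given any vertex $w$ and any shortest $x$-$w$ path through $e$, I would splice out its prefix up to $v_j$ and replace it by this alternative shortest prefix, producing a walk of the same length that avoids $e$, and hence a shortest $x$-$w$ path avoiding $e$. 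Thus $d_{G-e}(x,w)=d_G(x,w)$ for every $w$, so $e\notin EM(x)$. In either case $x$ fails to monitor some edge of $C$; as $x$ was arbitrary, $dem(G)\ge 2$, completing the contrapositive.
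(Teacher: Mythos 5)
Your proof is correct, including the delicate splicing step in the second case (the traversal of $e=v_{j-1}v_j$ on any shortest path from $x$ must go from $v_{j-1}$ to $v_j$ since distances increase along shortest paths, the alternative prefix through $v_{j+1}$ cannot use $e$ because $d_G(x,v_{j+1})=d_j-1$, and $e$ cannot reappear in the suffix of a simple path). Note, however, that this paper states the theorem as a cited result of Foucaud et al.\ and gives no proof of its own, so there is nothing in-text to match; the natural route \emph{within} the paper's toolkit is much shorter than yours: the forward direction is immediate from Theorem~\ref{Th-Bridge} (every bridge belongs to $EM(x)$ for every $x$, and in a tree all edges are bridges), and the converse is immediate from Theorem~\ref{Th-forest} (the edge set $EM(x)$ induces a forest, so $EM(x)=E(G)$ forces $G$ to be a connected forest, i.e.\ a tree). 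Your case analysis around a cycle --- either an edge of $C$ joins two vertices equidistant from $x$, or the distance sequence has a strict local maximum and the edge into it can be bypassed --- is in effect a self-contained proof of the special case of the forest lemma needed here (no cycle lies entirely in $EM(x)$), and it is also essentially the content of the characterization in Theorem~\ref{th-EM}: in your first case neither endpoint has the other as its unique lower-level neighbor, and in your second case $v_j$ has two neighbors $v_{j-1},v_{j+1}$ in $N_{d_j-1}(x)$. So your argument buys self-containedness at the cost of redoing, for one cycle, work the paper outsources to the cited lemmas; both are sound.
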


For two vertices $u,v$ of a graph $G$ and two non-negative integers $i,j$, we denote by $B_{i,j}(u, v)$ the set of vertices at
distance $i$ from $u$ and distance $j$ from $v$ in $G$.

\begin{theorem}{\upshape\cite{FKKMR21}}\label{th-dem-2}
Let $G$ be a connected graph with at least one cycle, and let $G_b$ be the base graph of $G$. Then, $dem(G) = 2$ if and
only if there are two vertices $u$, $v$ in $G_b$ such that all of the following conditions $(1)$-$(4)$ hold in $G_b$:

$(1)$ for all $i, j \in \{0, 1, 2, \cdots \}$, $B_{i,j}(u, v)$ is an independent set.

$(2)$ for all $i, j \in \{0, 1, 2, \cdots \}$, every vertex $x$ in
      $B_{i,j}(u, v)$ has at most one neighbor in each of the four sets
$B_{i-1,j}(u, v) \cup B_{i-1,j-1}(u, v)$, $B_{i-1,j}(u, v) \cup  B_{i-1,j+1}(u, v)$, $B_{i,j-1}(u, v) \cup $ $B_{i-1,j-1}(u, v)$ and $B_{i,j-1}(u, v) \cup B_{i+1,j-1}(u, v)$.

$(3)$ for all $i, j \in \{1, 2, \cdots \}$, there is no $4$-vertex path
      $zxyz'$ with $z \in  B_{i-1,a}(u, v)$, $z'\in B_{a',j}(u, v)$, $x \in  B_{i,j}(u, v)$, $y \in  B_{i-1,j+1}(u, v)$,
$a \in \{j-1, j + 1\}$, $a' \in \{i-2, i\}$.

$(4)$ for all $i, j \in \{1, 2, \cdots \}$, $x \in  B_{i,j}(u, v)$ has neighbors in at most two sets among
$B_{i-1,j+1}(u, v)$, $B_{i-1,j-1}(u, v)$, $B_{i+1,j-1}(u, v)$.
\end{theorem}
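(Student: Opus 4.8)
The plan is to prove both directions through a single structural lemma describing when one vertex monitors an edge, and then to translate the global requirement ``$\{u,v\}$ monitors every edge'' into the local cell conditions $(1)$--$(4)$. Since $dem(G)=dem(G_b)$ and $dem(G)=1$ holds exactly for trees by Theorem~\ref{th-dem-1}, it suffices to assume $G=G_b$ has minimum degree at least $2$ and to show that $\{u,v\}$ is a distance-edge-monitoring set of $G_b$ if and only if $(1)$--$(4)$ hold; the equivalence with $dem(G)=2$ then follows because $G$ contains a cycle, so $dem(G)\geq 2$.

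First I would isolate the one-vertex criterion. I claim that for $w\in V(G)$, an edge $e=ab$ with $d_G(w,a)<d_G(w,b)$ lies in $EM(w)$ if and only if $d_G(w,b)=d_G(w,a)+1$ and $a$ is the unique neighbour of $b$ at distance $d_G(w,a)$ from $w$ (the ``unique parent'' condition in the BFS layering from $w$). Indeed, if $b$ has a second such neighbour $a'$, then any shortest $w$--$z$ path through $ab$ can be rerouted through $a'b$ of equal length, so $e$ lies on no family of ``all shortest $w$--$z$ paths''; conversely, a unique parent forces $e$ onto every shortest $w$--$b$ path, giving $(w,b)\in P(\{w\},e)$. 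In particular, an edge with $d_G(w,a)=d_G(w,b)$ is monitored by no such $w$. This reduces the whole problem to counting, for each vertex, its neighbours in the preceding distance layers from $u$ and from $v$ simultaneously.

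Next I would set up the grid of cells $B_{i,j}(u,v)$ and note that the endpoints of any edge lie in cells whose indices differ by at most $1$ in each coordinate. Classifying edges by the relative position of their two cells gives a short list of types: those differing only in the $u$-index, those differing only in the $v$-index, those increasing in both coordinates, and the two skew types where one coordinate increases while the other decreases. For each type the one-vertex criterion says exactly which of $u,v$ can possibly monitor it and under what uniqueness-of-neighbour requirement; an edge inside a single cell (equal indices) is monitored by neither, which is precisely condition $(1)$. The remaining edge-types force the at-most-one-neighbour statements of condition $(2)$ and the ``at most two directions'' statement of condition $(4)$, each obtained by demanding that every edge of that type be monitored by $u$ or by $v$. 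For the forward direction I would argue contrapositively: violating $(1)$, $(2)$, or $(4)$ exhibits a concrete edge monitored by neither probe. For the backward direction I would verify that $(1)$--$(4)$ cover every edge-type.

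The hard part will be condition $(3)$, the forbidden $4$-vertex path $zxyz'$. Unlike the others it is not local at a single vertex: it describes a length-$3$ path which, although none of the at-most-one-neighbour counts of $(2)$ is violated, still creates an alternative shortest path that simultaneously destroys the unique-parent property needed for $u$ at one end and for $v$ at the other, leaving the middle edge unmonitored. The delicate step is to check that the precise cell memberships ($x\in B_{i,j}$, $y\in B_{i-1,j+1}$, with $z,z'$ in the indicated adjacent cells and $a\in\{j-1,j+1\}$, $a'\in\{i-2,i\}$) are exactly the configurations producing such an alternative path, and then to confirm that once $(1)$, $(2)$, $(4)$ and the absence of this path are all assumed, every edge of every type indeed acquires a unique parent from $u$ or from $v$. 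Establishing this completeness claim --- that $(1)$--$(4)$ together leave no unmonitored edge --- is where the bulk of the case analysis lies.
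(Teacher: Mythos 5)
The paper itself does not prove this statement---it is quoted from~\cite{FKKMR21}---but your plan is essentially the same argument as the original proof and as this paper's proof of the $dem(G)=3$ analogue (Theorem~\ref{th-dem-3}): reduce to $G_b$, use the unique-parent BFS criterion (Theorem~\ref{th-EM}) as the one-vertex monitoring lemma, derive $(1)$, $(2)$, $(4)$ as local necessary conditions and $(3)$ as the one non-local configuration in which $z$ destroys the unique parent of $x$ with respect to $u$ while $z'$ destroys the unique parent of $y$ with respect to $v$, leaving the middle edge $xy$ unmonitored, and finish with a type-by-type sufficiency case analysis over the possible cell displacements. Your reading of every condition is correct, and the case analysis you defer is exactly the routine verification carried out in Cases~\ref{Case:1}--\ref{Case:5} of the paper's $dem(G)=3$ proof.
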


\begin{theorem}{\upshape\cite{FKKMR21}}
\label{th-dem-n}
$dem(G) = n-1$ if and only if $G$ is the complete graph of order $n$.
\end{theorem}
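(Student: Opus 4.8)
The plan is to rest the entire argument on one elementary observation that holds in any connected graph $G$: for every vertex $x$, each edge incident to $x$ belongs to $EM(x)$. Indeed, if $e=xw\in E(G)$, then $d_G(x,w)=1$ while every $x$--$w$ path in $G-e$ avoids $e$ and so has length at least $2$ (or $w$ becomes unreachable from $x$, i.e.\ the distance is infinite); in either case $d_G(x,w)\neq d_{G-e}(x,w)$, so $(x,w)\in P(\{x\},e)$ and hence $e\in EM(x)$. The immediate consequence I will use repeatedly is that $\bigcup_{x\in M}EM(x)$ always contains every edge having at least one endpoint in $M$.

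For the direction ``$G=K_n\Rightarrow dem(G)=n-1$'', I would first pin down $EM(x)$ exactly in the complete graph. The inclusion above already gives that every edge incident to $x$ is monitored by $x$; for the reverse inclusion, take a non-incident edge $e=yz$ with $x\notin\{y,z\}$. Deleting $yz$ removes no edge incident to $x$, so every vertex remains at distance $1$ from $x$ in $K_n-e$, no distance from $x$ changes, and thus $e\notin EM(x)$. Hence in $K_n$ the set $EM(x)$ is precisely the set of edges incident to $x$. It then follows that $M$ is a distance-edge-monitoring set of $K_n$ if and only if $M$ is a vertex cover of $K_n$; since the minimum vertex cover of $K_n$ has size $n-1$ (its complement is an independent set, and $K_n$ has no independent set of size $2$), we conclude $dem(K_n)=n-1$.

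For the converse ``$dem(G)=n-1\Rightarrow G=K_n$'', I would argue the contrapositive and show that a connected non-complete graph $G$ of order $n$ satisfies $dem(G)\leq n-2$. Since $G\neq K_n$, choose two non-adjacent vertices $u,v$ and set $M=V(G)\setminus\{u,v\}$, so $|M|=n-2$. By the opening observation, $\bigcup_{x\in M}EM(x)$ covers every edge incident to a vertex of $M$; the only edges that could escape are those with both endpoints in $\{u,v\}$, but $uv\notin E(G)$, so no such edge exists. Therefore $M$ is a distance-edge-monitoring set and $dem(G)\leq n-2<n-1$, completing both directions.

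I do not expect a serious obstacle in this argument; the one step that genuinely carries the result is verifying in the $K_n$ case that a vertex monitors nothing beyond its incident edges. That fact is what forces every monitoring set to be a vertex cover and thereby fixes the value at exactly $n-1$ rather than something smaller, so I would take care to state and justify it explicitly rather than treat it as obvious.
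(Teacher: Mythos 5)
The paper does not prove this statement at all---it is quoted from Foucaud et al.~\cite{FKKMR21} as Theorem~\ref{th-dem-n}---so there is no in-paper proof to compare against; judged on its own, your argument is correct and is essentially the standard one, resting on exactly the two facts the paper itself records as Theorem~\ref{Th-Ncover} (every edge incident to $x$ lies in $EM(x)$, so any vertex cover is a distance-edge-monitoring set, as in Theorem~\ref{Theorem:Upperbond}) together with your converse inclusion. Your key verification---that in $K_n$ a vertex monitors nothing beyond its incident edges, since deleting a non-incident edge leaves every vertex at distance $1$ from $x$---is sound, and it correctly forces every monitoring set of $K_n$ to be a vertex cover (hence of size at least $n-1$), while the complement of a non-adjacent pair gives a monitoring set of size $n-2$ in any non-complete graph, completing both directions.
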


\section{Results for $P(M, e)$}

For the parameter $P(M, e)$, we have the following monotonicity property.
\begin{proposition}\label{Pro-P-1}
For two vertex sets $M_1,M_2$ and an edge $e$ of a graph $G$, if $M_1 \subset M_2$, then
$P(M_1, e) \subset P(M_2, e)$.
\end{proposition}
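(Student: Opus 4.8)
The plan is to verify the inclusion directly from the definition of $P(M,e)$, exploiting the fact that the defining condition on a pair $(x,y)$ splits into two independent requirements, only one of which refers to the vertex set $M$. Recall that
\[
P(M,e)=\{(x,y)\,|\,x\in M,\ y\in V(G),\ d_G(x,y)\neq d_{G-e}(x,y)\}.
\]
The key observation is that the predicate $d_G(x,y)\neq d_{G-e}(x,y)$ depends only on the vertices $x,y$ and on the fixed graph $G$ and edge $e$; it carries no reference to the set $M$ whatsoever. Membership of a pair $(x,y)$ in $P(M,e)$ is therefore the conjunction of the $M$-independent condition ``$d_G(x,y)\neq d_{G-e}(x,y)$'' with the single $M$-dependent condition ``$x\in M$''.

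First I would take an arbitrary pair $(x,y)\in P(M_1,e)$. By definition this means $x\in M_1$ and $d_G(x,y)\neq d_{G-e}(x,y)$. Since $M_1\subset M_2$, we immediately get $x\in M_2$, while the distance inequality is unaffected by the choice of vertex set. Hence $(x,y)$ satisfies both requirements for membership in $P(M_2,e)$, so $(x,y)\in P(M_2,e)$. As $(x,y)$ was arbitrary, this yields $P(M_1,e)\subseteq P(M_2,e)$, which is the claimed monotonicity.

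There is essentially no hard step here: the entire content is that the distance condition is ``monotone'' in $M$ in the trivial sense of not involving $M$ at all, so enlarging the first coordinate's admissible range can only enlarge the set of qualifying pairs. The only point worth flagging is that the inclusion need not be proper even when $M_1\subsetneq M_2$: if every vertex of $M_2\setminus M_1$ fails to monitor $e$ (that is, $d_G(x,y)=d_{G-e}(x,y)$ for all such $x$ and all $y\in V(G)$), then no new pairs are created and $P(M_1,e)=P(M_2,e)$. Thus the symbol $\subset$ in the statement is to be read as the (possibly non-strict) set inclusion $\subseteq$, and the argument above establishes exactly that.
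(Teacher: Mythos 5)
Your proof is correct and follows essentially the same element-chasing argument as the paper: take $(x,y)\in P(M_1,e)$, note $x\in M_2$ by the hypothesis while the distance condition is independent of $M$, and conclude $(x,y)\in P(M_2,e)$. Your closing remark that the inclusion may fail to be strict (so that $\subset$ must be read as $\subseteq$) is a sensible clarification, since the paper's statement and proof use $\subset$ loosely in exactly this non-strict sense.
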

\begin{proof}
For any $(x, y)\in P(M_1, e)$ with $x\in M_1$ and $y\in V(G)$, we have $d_G(x, y)\neq  d_{G-e}(x, y)$. Since $M_1 \subset M_2$, it follows that $x \in M_2$. Since $d_G(x,y)\neq  d_{G-e}(x, y)$, we have $(x, y)\in P(M_2, e)$, and so $P(M_1, e) \subset P(M_2, e)$.
\end{proof}

From Proposition \ref{Pro-P-1}, one may think $P(M_1, e)\nsubseteq P(M_2, e)$ if $M_1 \nsubseteq M_2$.
\begin{proposition}\label{Pro-P-2}
For two vertex sets $M_1,M_2$ and an edge $e$ of a graph $G$, if $P(M_1 \cap M_2, e)\neq \emptyset$, then
$M_1 \cap M_2=\emptyset$
if and only if
$P(M_1, e) \cap  P(M_2, e)=\emptyset$.
\end{proposition}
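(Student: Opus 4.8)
The plan is to reduce the whole statement to one set identity: intersecting the pair-sets ought to correspond exactly to intersecting the vertex sets. First I would prove
\[P(M_1,e)\cap P(M_2,e)=P(M_1\cap M_2,e).\]
This follows directly from the definition of $P(\cdot,e)$: a pair $(x,y)$ lies in $P(M_1,e)\cap P(M_2,e)$ exactly when $(x,y)\in P(M_1,e)$ and $(x,y)\in P(M_2,e)$, i.e.\ when its first coordinate $x$ belongs to $M_1$ and to $M_2$ simultaneously while $y\in V(G)$ satisfies $d_G(x,y)\neq d_{G-e}(x,y)$; since $x\in M_1$ and $x\in M_2$ together mean $x\in M_1\cap M_2$, both inclusions are merely a rewriting of the same conjunction. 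Along the way I would record the trivial remark that $P(\emptyset,e)=\emptyset$, because the empty vertex set offers no admissible first coordinate.

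With this identity in hand, the biconditional splits into two short implications. For the forward direction, assume $M_1\cap M_2=\emptyset$; then the identity gives $P(M_1,e)\cap P(M_2,e)=P(\emptyset,e)=\emptyset$, which is exactly the conclusion (and this direction needs no hypothesis at all). For the reverse direction I would invoke the standing assumption $P(M_1\cap M_2,e)\neq\emptyset$: were $P(M_1,e)\cap P(M_2,e)=\emptyset$, the identity would force $P(M_1\cap M_2,e)=\emptyset$, contradicting the assumption. Hence under the hypothesis the antecedent $P(M_1,e)\cap P(M_2,e)=\emptyset$ cannot occur, so that implication holds vacuously, and the two directions together yield the equivalence.

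The one point I would treat carefully in the write-up is the precise role of the hypothesis $P(M_1\cap M_2,e)\neq\emptyset$, since it makes the equivalence somewhat degenerate rather than a genuine coincidence, and this is where the reasoning is easiest to state sloppily. Indeed that hypothesis already forces $M_1\cap M_2\neq\emptyset$ (otherwise $P(M_1\cap M_2,e)=P(\emptyset,e)=\emptyset$), so under it the left-hand condition $M_1\cap M_2=\emptyset$ is always false; and by the identity the right-hand condition $P(M_1,e)\cap P(M_2,e)=\emptyset$ is likewise always false. The biconditional is therefore an equivalence of two conditions that are simultaneously false, and I would state this explicitly so the reader sees it as an immediate consequence of the set identity together with $P(\emptyset,e)=\emptyset$, rather than as a claim requiring any distance computation or structural argument about $G$.
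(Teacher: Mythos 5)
Your proof is correct and takes essentially the same route as the paper's: both arguments hinge on the identity $P(M_1,e)\cap P(M_2,e)=P(M_1\cap M_2,e)$ together with the hypothesis $P(M_1\cap M_2,e)\neq\emptyset$ (the paper obtains one inclusion from its monotonicity Proposition~\ref{Pro-P-1} and asserts the reverse inclusion as obvious, whereas you verify the identity directly from the definition, which is cleaner and sidesteps the paper's loose use of $\subset$ for $\subseteq$). Your closing remark that the hypothesis forces both sides of the biconditional to be simultaneously false is a correct expository sharpening, not a different argument.
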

\begin{proof}
If $M_1 \cap M_2=\emptyset$, then it follows from the definition of $P(M,e)$ that
$P(M_1, e) \cap  P(M_2, e)=\emptyset$.
Conversely, we suppose that $P(M_1, e) \cap  P(M_2, e)=\emptyset$.
Assume that $M_1 \cap M_2\neq \emptyset$. Let $M_1 \cap M_2=M$. Clearly,
$M \subset M_1$ and $M \subset M_2$.
From
Proposition \ref{Pro-P-1},
we have
$P(M, e) \subset P(M_1, e)$ and
$P(M, e) \subset P(M_2, e)$,
and hence
$P(M, e) \subseteq  P(M_1, e) \cap P(M_2, e)$.
Obviously,
$P(M_1, e) \cap P(M_2, e) \subseteq  P(M, e)$ and hence $P(M_1, e) \cap P(M_2, e)=P(M, e)$.
Since $P(M, e) \neq \emptyset$, it follows that
$P(M_1, e) \cap P(M_2, e) \neq \emptyset$,
a contradiction. So, we have $M_1 \cap M_2=\emptyset$.
\end{proof}

\subsection{Upper and lower bounds}

The following observation is immediate.
\begin{observation}{\upshape\cite{FKKMR21}}
Let $M$  be a distance-edge-monitoring set of a graph $G$. Then, for any two distinct edges $e_1$ and $e_2$ in $G$, we have
$P(M, e_1) \neq  P(M, e_2)$.
\end{observation}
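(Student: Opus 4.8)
The plan is to argue by contradiction. Suppose there exist distinct edges $e_1\neq e_2$ of $G$ with $P(M,e_1)=P(M,e_2)$. Since $M$ is a distance-edge-monitoring set, every edge of $G$ is monitored, so in particular $P(M,e_1)\neq\emptyset$; I would choose a pair $(x,y)\in P(M,e_1)=P(M,e_2)$ with $x\in M$. Recalling that $(x,v)\in P(\{x\},e)$ holds precisely when $e$ lies on every shortest path between $x$ and $v$, this choice forces \emph{both} $e_1$ and $e_2$ to lie on every shortest $x$-$y$ path of $G$.

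Next I would exploit the linear order that a single shortest path imposes on these two edges. Fix a shortest $x$-$y$ path $P=p_0p_1\cdots p_k$ with $p_0=x$ and $p_k=y$. Both $e_1$ and $e_2$ occur on $P$, say $e_1=p_ip_{i+1}$ and $e_2=p_jp_{j+1}$ with $i\neq j$; without loss of generality $i<j$. Set $b=p_{i+1}$. The initial segment $P[x,b]=p_0\cdots p_{i+1}$ is itself a shortest $x$-$b$ path, it traverses $e_1$, but it does not traverse $e_2$ (whose edge sits at position $j\geq i+1$). Hence $e_2$ does not lie on all shortest $x$-$b$ paths, so $(x,b)\notin P(\{x\},e_2)$ and therefore $(x,b)\notin P(M,e_2)$.

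To reach the contradiction it remains to show $(x,b)\in P(M,e_1)$, i.e.\ that $e_1$ lies on \emph{every} shortest $x$-$b$ path; this is the step I expect to be the crux. Suppose instead that $R$ is a shortest $x$-$b$ path avoiding $e_1$. Concatenating $R$ with the tail $P[b,y]=p_{i+1}\cdots p_k$ produces a walk from $x$ to $y$ of length $d_G(x,b)+d_G(b,y)=d_G(x,y)$, the last equality holding because $b$ lies on the shortest path $P$. A walk whose length equals the distance is a shortest path, and neither $R$ nor $P[b,y]$ traverses $e_1$, so this is a shortest $x$-$y$ path missing $e_1$, contradicting the fact that $e_1$ lies on all shortest $x$-$y$ paths. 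Thus every shortest $x$-$b$ path uses $e_1$, giving $(x,b)\in P(M,e_1)\setminus P(M,e_2)$ and contradicting $P(M,e_1)=P(M,e_2)$.

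I would flag two points to treat carefully in the write-up. First, the monitoring hypothesis is genuinely needed: it is exactly what guarantees $P(M,e_1)\neq\emptyset$, without which two unmonitored edges could trivially share the empty detection set. Second, in the concatenation step one should observe that a walk of length $d_G(x,y)$ cannot repeat a vertex (a repetition would permit a strictly shorter walk), so $R\cup P[b,y]$ is a genuine simple path and the bookkeeping that $e_1$ is not traversed at the junction vertex $b$ is legitimate, since $R$ reaches $b$ from a vertex different from $p_i$.
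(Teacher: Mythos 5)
Your proof is correct. Note that the paper itself supplies no argument for this statement: it is quoted from \cite{FKKMR21} and prefaced only by ``The following observation is immediate,'' so there is no in-paper proof to compare against. Your write-up fleshes out the intended argument soundly: nonemptiness of $P(M,e_1)$ comes exactly from the monitoring hypothesis, the pair $(x,y)$ forces both edges onto every shortest $x$--$y$ path, and the separating pair $(x,b)$ with $b$ the far endpoint of the earlier edge $e_1$ on a fixed shortest path does lie in $P(M,e_1)\setminus P(M,e_2)$ by your concatenation argument. Two small remarks: the final caveat about the junction vertex $b$ is superfluous, since a walk of length $d_G(x,y)$ can be shortcut to a shortest path using only a subset of the walk's edges, and that edge set already omits $e_1$ (neither $R$ nor $P[b,y]$ contains it); and it is worth saying explicitly that membership of the fixed pair $(x,b)$ in $P(M,e)$ depends only on whether $d_G(x,b)\neq d_{G-e}(x,b)$, which is why $(x,b)\notin P(\{x\},e_2)$ immediately gives $(x,b)\notin P(M,e_2)$. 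Incidentally, your argument also yields the slightly stronger fact that $P(M,e_1)\not\subseteq P(M,e_2)$ whenever $P(M,e_1)\neq\emptyset$, since the case where neither edge is monitored is the only way two edges can share the same (empty) set of detecting pairs.
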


For any graph $G$ with order $n$, if
$|M|=1$, then we have the following observation.
\begin{observation}
Let $G$ be a graph with order $n$, and $v \in V(G)$. Then
$$
0\leq |P(\{v\},uw)|\leq n-1.
$$
Moreover, the bounds are sharp.
\end{observation}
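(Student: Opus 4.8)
The plan is to establish both bounds on $|P(\{v\}, uw)|$ and then exhibit graphs meeting each extreme.

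First I would prove the lower bound, which is trivial: $P(\{v\}, uw)$ is by definition a set of pairs, so its cardinality is at least $0$. To see that $0$ is attained, take any edge $uw$ that lies on no shortest path emanating from $v$; for instance, in a cycle $C_n$ with $n$ odd and any vertex $v$, the edge $uw$ antipodal to $v$ (the unique edge whose two endpoints are both at maximum distance from $v$) satisfies $d_G(v, y) = d_{G-uw}(v, y)$ for every $y$, since deleting $uw$ leaves an alternate shortest path of the same length. Hence $P(\{v\}, uw) = \emptyset$ and the lower bound is sharp.

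Next I would prove the upper bound $|P(\{v\}, uw)| \le n - 1$. The key observation is that $P(\{v\}, uw)$ consists of pairs $(v, y)$ with a \emph{fixed} first coordinate $v$, so the pairs are in bijection with the vertices $y \in V(G)$ for which $d_G(v, y) \neq d_{G-uw}(v, y)$. There are $n$ vertices in total, but $y = v$ can never contribute, since $d_G(v, v) = 0 = d_{G-uw}(v, v)$ regardless of which edge is deleted. Therefore at most $n - 1$ vertices $y$ can yield a pair, giving $|P(\{v\}, uw)| \le n - 1$. For sharpness I would take a star $K_{1,n-1}$ with center $v$ and delete a pendant edge; more robustly, since we need a graph with at least one edge on which many shortest paths from $v$ depend, I would consider a graph where $uw$ is a cut edge separating $v$ from all other vertices, so that removing $uw$ disconnects (or strictly lengthens the distance to) all $n - 1$ remaining vertices, forcing every pair $(v, y)$ with $y \neq v$ into $P(\{v\}, uw)$.

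I do not anticipate a genuine obstacle here, since both bounds follow from the fixed-first-coordinate structure of $P(\{v\}, uw)$ and the triviality that $y = v$ is excluded. The only point requiring a little care is the sharpness of the upper bound: I must choose $uw$ so that its deletion affects the distance from $v$ to \emph{every} other vertex simultaneously, which is why a cut edge incident with $v$ (equivalently, choosing $G$ a tree or a graph with $v$ of degree $1$ attached by a bridge) is the natural witness. With these two explicit constructions in hand, both the claimed bounds and their sharpness are established.
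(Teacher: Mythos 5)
The paper offers no proof of this observation (it is stated as immediate), so there is no argument to match against; your proof is the natural one and is essentially correct. The lower bound and its witness are fine: in $C_{2k+1}$ the edge joining the two vertices at distance $k$ from $v$ lies on no shortest path from $v$, so $P(\{v\},uw)=\emptyset$. The upper bound is also argued correctly: the first coordinate is pinned to $v$, and $y=v$ never contributes since $d_G(v,v)=0=d_{G-uw}(v,v)$, giving at most $n-1$ pairs.

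One concrete slip: your first sharpness witness for the upper bound, the star $K_{1,n-1}$ \emph{with center $v$} and a pendant edge deleted, does not work. Deleting a pendant edge $vw$ there changes only $d(v,w)$ (the other $n-2$ leaves remain at distance $1$ through $v$), so $|P(\{v\},vw)|=1$, not $n-1$. Your fallback construction is the right one and should replace it: take $v$ of degree $1$ attached by a bridge $uw$ (with $v\in\{u,w\}$), e.g., $v$ a leaf of a star or an endpoint of a path $P_n$ with $uw$ the edge incident to $v$; then removing $uw$ strictly increases (indeed makes infinite) the distance from $v$ to each of the other $n-1$ vertices, so all $n-1$ pairs $(v,y)$ lie in $P(\{v\},uw)$. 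Note also that ``a cut edge incident with $v$'' alone is not sufficient as you phrase it parenthetically --- vertices in the component of $G-uw$ containing $v$ keep their distances --- so the degree-$1$ condition you state is what actually does the work. With the first example deleted, the proof is complete.
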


In terms of order of a graph $G$, we can derive the following upper and lower bounds.
\begin{proposition}\label{th-Bounds-P1-1}
Let $G$ be a graph of order $n$. For a vertex set $M$ and an edge $e$ of a graph $G$,
we have
$$
 0 \leq |P(M, e)| \leq n(n-1).
$$
Moreover, the bounds are sharp.
\end{proposition}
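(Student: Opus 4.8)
The plan is to establish the two inequalities separately and then exhibit graphs meeting each bound. The lower bound $0 \le |P(M,e)|$ is immediate, since $P(M,e)$ is a set and its cardinality is a nonnegative integer. For the upper bound, I would first observe from the definition of $P(M,e)$ that $P(M,e) \subseteq M \times V(G)$, so $P(M,e)$ consists only of ordered pairs $(x,y)$ with $x \in M$ and $y \in V(G)$; the number of such pairs is at most $|M| \cdot n$.

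The key step is to tighten this count by excluding the diagonal. For every vertex $x$ we have $d_G(x,x) = 0 = d_{G-e}(x,x)$, so the pair $(x,x)$ never belongs to $P(M,e)$. Hence for each fixed $x \in M$ there are at most $n-1$ admissible second coordinates $y$, which gives $|P(M,e)| \le |M|(n-1)$. Since $M \subseteq V(G)$ forces $|M| \le n$, we obtain $|P(M,e)| \le n(n-1)$, as required.

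It then remains to verify sharpness. For the lower bound I would note that whenever $e$ lies on no shortest path issuing from a vertex of $M$ we have $|P(M,e)| = 0$; concretely, taking $G = C_3$ with $V(G) = \{1,2,3\}$, $M = \{1\}$ and $e = 23$, one checks that deleting $e$ leaves $d(1,2) = d(1,3) = 1$ unchanged, so $|P(M,e)| = 0$. For the upper bound I would take $G = K_2$ with $V(G) = \{u,v\}$, the single edge $e = uv$, and $M = V(G)$. Deleting $e$ disconnects $u$ from $v$, so $d_G(u,v) = 1 \ne \infty = d_{G-e}(u,v)$, and symmetrically for $(v,u)$; thus both ordered pairs of distinct vertices lie in $P(M,e)$, whence $|P(M,e)| = 2 = n(n-1)$.

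I expect the only delicate point to be the sharpness of the upper bound rather than the inequality itself. Equality $|P(M,e)| = n(n-1)$ forces $|M| = n$ and forces $e$ to lie on every shortest path between every ordered pair of distinct vertices. If $G$ contained any edge $xy \ne e$, then $d_G(x,y) = 1 = d_{G-e}(x,y)$, so $(x,y) \notin P(M,e)$; hence $G$ can carry no edge other than $e$, and among connected graphs only $K_2$ attains the bound. I would therefore present $K_2$ as the extremal graph realizing the upper bound, so that both bounds are shown to be sharp.
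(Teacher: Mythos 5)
Your proof is correct and follows essentially the same route as the paper: both arguments reduce the upper bound to counting ordered pairs of distinct vertices (you count $M\times V(G)$ minus the diagonal directly, while the paper first invokes its monotonicity result $P(M,e)\subseteq P(V(G),e)$ and then counts), and both realize the upper bound with $G=K_2$, $M=V(G)$. Your lower-bound witness ($C_3$ with $M=\{1\}$ and $e$ the edge $23$) is a simpler substitute for the paper's join construction $K_n\vee H$, and your closing observation that $K_2$ is the unique connected extremal graph is correct additional content not required by the statement.
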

\begin{proof}
Clearly, $|P(M, e)|\geq 0$. From Proposition \ref{Pro-P-1}, we have $P(M, e) \subset P(V(G), e)$. Let $M=V(G)$. Then
the number of ordered pairs is $n(n-1)$ in $G$, and hence $|P(M, e)| \leq n(n-1)$,
as desired.
\end{proof}

To show the sharpness of the bounds in Proposition~\ref{th-Bounds-P1-1}, we consider the following examples.
\begin{example}
For any graph $H$, let $G=K_n \vee H$. Let $M=V(K_n)$ and $e\in E(H)$.
If $x,y\in M$, then $d_G(x,y)=d_{G-e}(x,y)=1$, and so
$(x,y) \notin P(M,e)$.
If $x\in V(K_n)$
and $y\in V(H)$, then $d_G(x,y)=d_{G-e}(x,y)=1$, and hence $(x,y) \notin P(M,e)$.
Clearly,
$P(M,e)=\emptyset$, and hence
$|P(M,e)|=0$. If $G=K_2$, then
$|P(M, e)| = n(n-1)$, which means that the bounds in Proposition \ref{th-Bounds-P1-1} are sharp.
\end{example}

The \emph{double star} $S(n,m)$ for integers $n\geq m\geq 0$ is the
graph obtained from the union of two stars $K_{1,n}$ and $K_{1,m}$
by adding the edge $e$ between their centers.
\begin{proposition}\label{th-Bounds-P2-2}
Let $G$ be a graph of order $n$
with a cut edge $e$. For any vertex set $M$, we have
$$
 2(n-1)\leq |P(M, e)| \leq 2\lfloor n/2\rfloor \lceil n/2 \rceil.
$$
Moreover, the bounds are sharp.
\end{proposition}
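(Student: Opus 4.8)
The plan is to exploit the defining feature of a cut edge: deleting it splits $G$ into exactly two connected pieces, and this controls $P(M,e)$ completely. Write $e=uw$ and let $G-e$ have components $G_1\ni u$ and $G_2\ni w$, with $|V(G_1)|=n_1$ and $|V(G_2)|=n_2$, so that $n_1+n_2=n$ and $n_1,n_2\ge 1$. The first and central step is to identify which pairs contribute. If $x$ and $y$ lie in the same component of $G-e$, then that component is already connected without $e$, so no shortest $x$--$y$ path can use $e$ and $d_G(x,y)=d_{G-e}(x,y)$; such a pair never lies in $P(M,e)$. If instead $x$ and $y$ lie in different components, then $d_{G-e}(x,y)=\infty\neq d_G(x,y)$, so the pair always lies in $P(M,e)$. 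Hence $(x,y)\in P(V(G),e)$ holds precisely when $x$ and $y$ straddle the cut.

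The second step is a counting computation. By the characterization above, taking $M=V(G)$ (which by Proposition~\ref{Pro-P-1} maximizes $|P(M,e)|$, mirroring the role of $V(G)$ in Proposition~\ref{th-Bounds-P1-1}) counts exactly the ordered cross-cut pairs, giving $|P(V(G),e)|=n_1 n_2+n_2 n_1=2n_1n_2$. It then remains to optimize the integer function $f(n_1)=2n_1(n-n_1)$ over $1\le n_1\le n-1$. Since $f$ is a downward parabola in $n_1$, it is minimized at the endpoints $n_1\in\{1,n-1\}$, giving $f=2(n-1)$, and maximized at the balanced split $n_1=\lfloor n/2\rfloor$, $n_2=\lceil n/2\rceil$, giving $f=2\lfloor n/2\rfloor\lceil n/2\rceil$. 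This yields both inequalities.

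The final step is sharpness, for which the double star $S(n,m)$ introduced just above the statement is tailor-made. For the lower bound, I would take a graph in which one side of the cut is a single vertex, e.g.\ a star $K_{1,n-1}$ with $e$ a pendant edge, so that $n_1=1$ and $2n_1n_2=2(n-1)$. For the upper bound, I would take the double star $S(\lceil n/2\rceil-1,\lfloor n/2\rfloor-1)$, of order $\lceil n/2\rceil+\lfloor n/2\rfloor=n$, whose central bridge is a cut edge splitting it into parts of sizes $\lceil n/2\rceil$ and $\lfloor n/2\rfloor$, so that $2n_1n_2=2\lfloor n/2\rfloor\lceil n/2\rceil$.

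I do not expect a serious obstacle here. The only place requiring care is the same-component claim in the first step, where one must argue that a bridge lies on no shortest path between two vertices of the same side; this is immediate once one observes that the side is connected in $G-e$ and that its internal distances already realize $d_{G-e}$. A secondary point worth flagging is the phrase \emph{for any vertex set $M$} in the statement: the lower bound clearly fails for small $M$ (for instance $M=\emptyset$ gives $|P(M,e)|=0$), since the finer count is $|P(M,e)|=m_1 n_2+m_2 n_1$ where $m_i=|M\cap V(G_i)|$. The bounds should therefore be read as applying to $M=V(G)$, equivalently as bounds on $\max_M|P(M,e)|$, and the argument above establishes exactly this.
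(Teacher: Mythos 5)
Your proof follows essentially the same route as the paper's: decompose $G-e$ into components $G_1,G_2$ with $|V(G_i)|=n_i$, observe that with $M=V(G)$ the set $P(M,e)$ consists exactly of the ordered cross-cut pairs so that $|P(M,e)|=2n_1n_2=2n_1(n-n_1)$, and then bound this quantity between its endpoint value $2(n-1)$ and its balanced-split value $2\lfloor n/2\rfloor\lceil n/2\rceil$, with the same double-star example for sharpness of the upper bound. Your closing remark is a genuine (and correct) refinement rather than a deviation: the statement's phrase ``for any vertex set $M$'' cannot hold literally (e.g.\ $M=\emptyset$ gives $|P(M,e)|=0$), and the paper's own proof, like yours, in fact only establishes the bounds for $M=V(G)$, i.e.\ for $\max_M|P(M,e)|$.
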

\begin{proof}
Let $G_1,G_2$ be the two components of $G\setminus e$, and let $|V(G_1)|=n_1$ and  $|V(G_2)|=n_2$. For any $x\in V(G_1)$ and
$y\in V(G_2)$, since $e$ is cut edge,
it follows that $d_G(x, y) \neq d_{G-e}(x, y)$.
If $M=V(G)$, then
$P(M, e)=\{(x,y),(y,x)| x\in V(G_1)~ and~y\in V(G_2) \}$, and hence
$|P(M, e)|=2|V(G_1)||V(G_2)|
=2n_1n_2=2n_1(n-n_1) \leq 2\lfloor
\frac{n}{2}\rfloor \lceil \frac{n}{2} \rceil$, and
so $|P(M, e)| \leq 2\lfloor
\frac{n}{2}\rfloor \lceil \frac{n}{2} \rceil$.
Since $|P(M, e)|=2n_1(n-n_1) \geq 2(n-1)$, it follows that $|P(M, e)| \geq 2(n-1)$.
\end{proof}

\begin{example}
Let $G$ be the double star $S(\lfloor n/2\rfloor-1,\lceil n/2 \rceil-1)$. If
$M=V(G)$, then $d_G(x,y)\neq d_{G-e}(x,y)$ for any $x\in V(K_{1,\lfloor n/2\rfloor-1})$ and $y\in V(K_{1,\lfloor n/2\rfloor-1})$. Then $(x,y), (y,x)\in P(M,e)$, and hence $|P(M,e)|\geq 2\lfloor n/2\rfloor \lceil n/2 \rceil$. From Proposition \ref{th-Bounds-P2-2}, we have $|P(M,e)|\leq 2\lfloor n/2\rfloor \lceil n/2 \rceil$ and hence $|P(M,e)|= 2\lfloor n/2\rfloor \lceil n/2 \rceil$.
\end{example}

In fact, we can characterize the graphs attaining the lower bounds in Proposition \ref{th-Bounds-P1-1}.
\begin{proposition}\label{Proposition:P(M,e)extremal value}
Let $G$ be a graph with
$uv \in E(G)$ and
$M \subset V(G)$. Then
$|P(M,uv)|=0$ if and only if one of the following conditions holds.
\begin{itemize}
\item[] $(i)$ $M = \emptyset$;

\item[] $(ii)$ $d_G(x,u)=d_G(x,v)$ for any $x \in M$.

\item[] $(iii)$ for any $x \in M$ and $d_G(x, u)=d_G(x, v)+1$, we have
$d_{G-uv}(x, u)=d_G(x, u)$.
\end{itemize}
\end{proposition}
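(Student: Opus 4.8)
The plan is to reduce the set-version statement to a per-probe statement and then to analyse a single probe $x$ according to how the deletion of $uv$ affects the shortest-path structure rooted at $x$. Directly from the definition of $P(M,e)$ one has the decomposition $P(M,uv)=\bigcup_{x\in M}P(\{x\},uv)$, so $|P(M,uv)|=0$ is equivalent to $P(\{x\},uv)=\emptyset$ for every $x\in M$. If $M=\emptyset$ this holds vacuously, which is case $(i)$; otherwise it suffices to characterise, for a single vertex $x$, when the removal of $uv$ leaves every distance $d_G(x,y)$ unchanged.

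First I would record the elementary constraint that, since $uv\in E(G)$, the endpoints satisfy $|d_G(x,u)-d_G(x,v)|\leq 1$. This splits the analysis into two cases. If $d_G(x,u)=d_G(x,v)$ (condition $(ii)$), then $uv$ lies on no shortest path emanating from $x$: a shortest $x$-$y$ path using $uv$ would traverse it from its endpoint nearer to $x$ to its endpoint farther from $x$, forcing those two distances to differ by exactly $1$. Hence deleting $uv$ changes no distance from $x$, and $P(\{x\},uv)=\emptyset$.

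The substantive case is $|d_G(x,u)-d_G(x,v)|=1$; by the symmetry of the edge $uv$ I may assume $d_G(x,u)=d_G(x,v)+1$, so $u$ is the farther endpoint (the opposite labelling is handled identically, which is the sense in which condition $(iii)$ is to be read for both endpoints). Here I claim $P(\{x\},uv)=\emptyset$ if and only if $d_{G-uv}(x,u)=d_G(x,u)$. The ``only if'' direction is immediate: if $d_{G-uv}(x,u)\neq d_G(x,u)$, then $(x,u)\in P(\{x\},uv)$. For the ``if'' direction, suppose $d_{G-uv}(x,u)=d_G(x,u)$ and let $y$ be arbitrary. If no shortest $x$-$y$ path uses $uv$ there is nothing to prove; otherwise, such a path must cross $uv$ in the direction $v\to u$, so its portion from $u$ to $y$ is a shortest $u$-$y$ path of length $d_G(x,y)-d_G(x,u)=d_G(u,y)$ that does not use $uv$. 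Concatenating a shortest $x$-$u$ path in $G-uv$ (which exists, of length $d_G(x,u)$, by hypothesis) with this portion yields an $x$-$y$ walk of length $d_G(x,u)+d_G(u,y)=d_G(x,y)$ that avoids $uv$; since deletion of an edge never decreases a distance, $d_{G-uv}(x,y)=d_G(x,y)$. Thus $x$ contributes no pair, giving condition $(iii)$.

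Combining the three cases over all $x\in M$ yields the stated equivalence. The main obstacle is the rerouting step in the preceding paragraph: one must argue carefully that merely preserving the distance to the farther endpoint $u$ forces every distance from $x$ to be preserved, which hinges on the fact that any shortest path through $uv$ meets it in a fixed orientation and decomposes cleanly at $u$. A secondary point to handle with care is the implicit symmetry between $u$ and $v$, so that condition $(iii)$ is understood to apply to whichever endpoint is farther from a given probe.
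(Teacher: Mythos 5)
Your proposal is correct and follows essentially the same route as the paper: reduce to single probes, use $|d_G(x,u)-d_G(x,v)|\leq 1$ to split into the equal-distance case (condition $(ii)$) and the case where $u$ is the farther endpoint, prove the forward direction via the pair $(x,u)$, and handle the symmetric labelling of $u,v$ implicitly. Your converse is in fact more complete than the paper's, which merely asserts that $d_G(x,y)=d_{G-uv}(x,y)$ for all $y$ under $(ii)$ or $(iii)$, whereas you supply the missing rerouting argument (any shortest $x$--$y$ path through $uv$ crosses it $v\to u$ and decomposes at $u$, so preserving $d(x,u)$ preserves all distances from $x$).
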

\begin{proof}
Suppose that $|P(M,uv)|=0$.
Since
$$
P(M,uv)=\{(x, y)|d_G(x, y)\neq  d_{G-uv}(x, y),x \in M, y\in V(G)\}=\emptyset,
$$
it follows that $M=\emptyset$
or there exists a vertex set
$M\in V(G)$ and an edge $uv\in E(G)$
such that
$d_G(x, y)=d_{G-uv}(x, y)$ for any $x\in M$ and
$y\in V(G)$. For the fixed $x$, if $y=u$ and $y=v$, then we only need to consider the path
from $x$ to $y$ through $uv$, and hence $d_G(x, u)=d_{G-uv}(x, u)$
and $d_G(x, v)=d_{G-uv}(x, v)$.
Clearly, we have
$|d_G(x, v)-d_G(x, u)|\leq 1$. Without loss of generality, let
$d_G(x, u)\geq d_G(x, v)$.
For any $x\in M$, if
$d_G(x, u)=d_G(x, v)$, then $(ii)$ is true.

\begin{claim}
If $d_G(x, u)=d_G(x, v)+1$, then
$d_{G-uv}(x, u)=d_G(x, u)$.
\end{claim}
\begin{proof}
Assume, to the contrary, that
$d_{G-uv}(x, u) > d_G(x, u)$.
For $u \in V(G)$, we have $d_{G-uv}(x, u) \neq  d_G(x, u)$, and hence
$(x,u) \in  P(M, uv)=\emptyset$, a contradiction.
\end{proof}

Conversely, if $M=\emptyset$, then $|P(M,uv)|=0$. For any $x \in M$, suppose that $d_G(x,u)=d_G(x,v)$, then $d_G(x,y)=d_{G-uv}(x,y)$ for any $y \in V(G)$, and hence
$(x,y) \notin P(M,uv)$.
For any $x \in M$, if $d_G(x, u)=d_G(x, v)+1$ then
$d_{G-uv}(x, u)=d_G(x, u)$ and hence $d_G(x,y)=d_{G-uv}(x,y)$ for any $y \in V(G)$. It follows that $(x,y) \notin P(M,uv)$.
From the definition of $P(M, e)$,
we have $P(M, e)=\emptyset$,
and hence $|P(M, e)|=0$.
\end{proof}

In fact, we can characterize the graphs attaining the upper bounds in Proposition~\ref{th-Bounds-P2-2}.
\begin{proposition}\label{th-P-Upper}
Let $G$ be a graph with a cut edge $v_1v_2\in E(G)$ and $M=V(G)$. Then $|P(M,v_1v_2)|=2\lfloor \frac{n}{2}\rfloor \lceil \frac{n}{2} \rceil$ if and only if there are two vertex disjoint subgraphs $G_1$ and $G_2$ with $V(G)=V(G_1)\cup V(G_2)$ and $||V(G_1)|-|V(G_2)|| \leq 1$, where
$v_i\in V(G_i)$, $i=1,2$. In addition, $G_1$
and $G_2$ is connected by a bridge edge $v_1v_2$.
\end{proposition}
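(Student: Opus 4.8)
The plan is to reduce the statement to the closed-form computation already carried out in the proof of Proposition~\ref{th-Bounds-P2-2}. Since $v_1v_2$ is a cut edge (a bridge), the graph $G - v_1v_2$ splits into exactly two connected components; I would name them $G_1$ and $G_2$, with $v_1 \in V(G_1)$ and $v_2 \in V(G_2)$, and set $n_i = |V(G_i)|$, so that $n_1 + n_2 = n$. The proof of Proposition~\ref{th-Bounds-P2-2} shows that for $M = V(G)$ we have $P(M, v_1v_2) = \{(x,y), (y,x) : x \in V(G_1),\ y \in V(G_2)\}$, whence $|P(M, v_1v_2)| = 2n_1 n_2$. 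Thus the entire question collapses to the following: for which pairs $(n_1, n_2)$ with $n_1 + n_2 = n$ does $2n_1 n_2$ equal $2\lfloor n/2\rfloor \lceil n/2\rceil$?

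The algebraic core is the elementary optimization of the product $n_1 n_2$ subject to $n_1 + n_2 = n$ over positive integers. Writing $f(n_1) = n_1(n - n_1)$, this is a downward parabola maximized at $n_1 = n/2$, and over the integers the maximum value $\lfloor n/2\rfloor\lceil n/2\rceil$ is attained exactly when $\{n_1, n_2\} = \{\lfloor n/2\rfloor, \lceil n/2\rceil\}$. Splitting on the parity of $n$ makes this precise: if $n = 2m$, the maximum $m^2$ is reached only at $n_1 = n_2 = m$; if $n = 2m+1$, the maximum $m(m+1)$ is reached only at $\{n_1, n_2\} = \{m, m+1\}$. In either case the maximizing condition is exactly $||V(G_1)| - |V(G_2)|| = |n_1 - n_2| \leq 1$, while any more unbalanced split yields a strictly smaller product.

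With this in hand, both implications are immediate. For the forward direction, $|P(M, v_1v_2)| = 2\lfloor n/2\rfloor\lceil n/2\rceil$ forces $n_1 n_2 = \lfloor n/2\rfloor\lceil n/2\rceil$, which by the optimization yields $|n_1 - n_2| \leq 1$; the two components $G_1, G_2$ are vertex disjoint, cover $V(G)$, contain $v_1, v_2$ respectively, and are joined only by the bridge $v_1v_2$, giving exactly the asserted decomposition. Conversely, any decomposition of the stated form must coincide with the two components of $G - v_1v_2$, so $|P(M, v_1v_2)| = 2n_1 n_2$, and the balance condition $|n_1 - n_2| \leq 1$ then forces $n_1 n_2 = \lfloor n/2\rfloor\lceil n/2\rceil$ and hence the claimed value. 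The only point requiring care — rather than a genuine obstacle — is the parity case-split in the integer optimization, together with the observation that, because $v_1v_2$ is a bridge, the decomposition into $G_1$ and $G_2$ is unique, so the existential phrasing in the statement really pins down the sizes of the two fixed components.
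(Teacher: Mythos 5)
Your proposal is correct and follows essentially the same route as the paper's proof: both identify $P(M,v_1v_2)$ with the set of ordered cross pairs between the two components of $G-v_1v_2$ (you delegate this to the computation in Proposition~\ref{th-Bounds-P2-2}, the paper re-proves it as a claim) and then observe that $2n_1n_2 = 2\lfloor n/2\rfloor\lceil n/2\rceil$ with $n_1+n_2=n$ holds exactly when $|n_1-n_2|\leq 1$. Your explicit parity case-split and the remark that the bridge makes the decomposition unique are slightly more careful than the paper's one-line equality condition, but they are refinements of the same argument, not a different approach.
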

\begin{proof}
Suppose that $|P(M,v_1v_2)|=2\lfloor \frac{n}{2}\rfloor \lceil \frac{n}{2} \rceil$. Since $M=V(G)$, it follows that there are two induced subgraphs $G_1$ and $G_2$ with $V(G)=V(G_1)\cup V(G_2)$, where
$v_i\in V(G_i)$, $i=1,2$. Note that
$v_1v_2$ is a cut edge of $G$.
\begin{claim}\label{Claim-P-U}
If $x, y\in V(G_i)$, then
$(x,y) \notin P(M, e)$
and $(y, x) \notin P(M, e)$, where $i=1, 2$.
\end{claim}
\begin{proof}
Assume, to the contrary, that $x, y\in V(G_i)$
and $(x,y) \in P(M, e)$, where $i=1,2$.
Then there exists a shortest path
from $x$ to $y$ such that $d_G(x,y)\neq d_{G-v_1v_2}(x,y)$,
where $v_i \in V(G_i)$, $i=1,2$.
Since $v_1v_2$ is a cut edge, it follows that
$d_G(x, y)=d_{G-v_1v_2}(x, y)$, and hence $(x,y)\notin P(M,e)$, a contradiction.
\end{proof}

By Claim \ref{Claim-P-U}, we only consider that $x\in V(G_i)$ and $y\in V(G)-V(G_i)$ ($i=1, 2$).
Since $v_1v_2$ is a cut edge, it follows that
$d_G(x, y) \neq d_{G-v_1v_2}(x, y)$, and hence $(x,y)\in P(M,e)$. It follows that
$|P(M, e)|= 2|V(G_1)||V(G_2)|=2|V(G_1)|(n-|V(G_1)|)
\leq 2\lfloor \frac{n}{2}\rfloor \lceil \frac{n}{2} \rceil$,
where the equality holds if and only
$|V(G_1)|=\lfloor\frac{n}{2}\rfloor$
or $|V(G_1)|=\lceil\frac{n}{2}\rceil$, and hence $||V(G_1)|-|V(G_2)|| \leq 1$.

Conversely, we suppose that there are two vertex disjoint subgraphs $G_1$ and $G_2$ with $V(G)=V(G_1)\cup V(G_2)$ and $||V(G_1)|-|V(G_2)|| \leq 1$, where
$v_i\in V(G_i)$, $i=1,2$.
Then $G_1$ and $G_2$ are connected by a bridge edge, and hence
$|P(M, e)| = |P(V(G), e)|= 2|V(G_1)||V(G_2)|
=2\lfloor n/2\rfloor \lceil n/2 \rceil
$, as desired.
\end{proof}

For $|P(M,e)|$, we give some results for some special graphs.
\begin{lemma}\label{lem:COM}
Let $K_n$ be a complete graph, and let $M\subseteq V(K_n)$. Then
$$
 P(M,uv)=
\begin{cases}
\{(u,v),(v,u)\} & \mbox{if}~u,v\in M,\\
\{(u,v)\} & \mbox{if}~u\in M~\text{and}~v\notin M,\\
\{(v,u)\} & \mbox{if}~v\in M~\text{and}~u\notin M,\\
\emptyset,& \mbox{if}~u,v\notin M,
\end{cases}
$$
where $uv\in E(K_n)$.
\end{lemma}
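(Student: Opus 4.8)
The plan is to exploit the rigidity of distances in a complete graph: every pair of distinct vertices is at distance $1$, and deleting a single edge $uv$ perturbs exactly one of these distances. First I would recall from the definition that, for $x\in M$ and $y\in V(K_n)$, the ordered pair $(x,y)$ lies in $P(M,uv)$ precisely when $d_{K_n}(x,y)\neq d_{K_n-uv}(x,y)$; in particular the first coordinate is always constrained to lie in $M$, which is the source of the orientation-dependence in the four cases.

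Next I would show that the only pair whose distance is affected by the deletion is $\{u,v\}$. If $x=y$, then both distances equal $0$. If $x\neq y$ and $\{x,y\}\neq\{u,v\}$, then the edge $xy$ still belongs to $K_n-uv$, so $d_{K_n}(x,y)=d_{K_n-uv}(x,y)=1$; hence such a pair never enters $P(M,uv)$. For the remaining pair, $d_{K_n}(u,v)=1$ while $d_{K_n-uv}(u,v)\geq 2$ (it equals $2$ through any third vertex when $n\geq 3$, and is $\infty$ when $n=2$), so the distance strictly changes. Consequently the only candidate members of $P(M,uv)$ are the two ordered pairs $(u,v)$ and $(v,u)$, and each is included exactly when its first coordinate lies in $M$: that is, $(u,v)\in P(M,uv)$ iff $u\in M$, and $(v,u)\in P(M,uv)$ iff $v\in M$.

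Finally I would assemble the two membership conditions into the stated case distinction. When $u,v\in M$ both pairs qualify, giving $P(M,uv)=\{(u,v),(v,u)\}$; when exactly one of $u,v$ lies in $M$ we keep only the correspondingly oriented pair; and when neither lies in $M$ the set is empty. The argument is essentially bookkeeping, so there is no real obstacle; the only points requiring care are respecting the ordering of the pairs (only the first coordinate must be in $M$) and the degenerate case $n=2$, where $K_n-uv$ is disconnected — but since an infinite distance still differs from $1$, the conclusion is unaffected.
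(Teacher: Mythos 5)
Your proposal is correct and follows essentially the same route as the paper: both arguments rest on the observation that deleting $uv$ from $K_n$ changes only the distance between $u$ and $v$ (from $1$ to at least $2$), so membership of $(u,v)$ or $(v,u)$ in $P(M,uv)$ is decided purely by whether the first coordinate lies in $M$. If anything, your write-up is slightly more careful than the paper's, since you explicitly rule out all pairs $(x,y)$ with $\{x,y\}\neq\{u,v\}$ and handle the degenerate case $n=2$, where $d_{K_n-uv}(u,v)=\infty$ rather than $2$.
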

\begin{proof}
Let
$V(K_n)=\{v_1,v_2,\cdots,v_n\}$. For any edge $uv$,
if $u \in M$ and $v \notin M$, then
$P(M, uv)=\{(x, y)| x \in M, y \in V(G)  \text{~and~} d_G(x, y)\neq  d_{G-uv}(x, y)\}$. Since
$d_{K_n}(u, v)=1$ and $d_{K_n-uv}(u, v)=2$,
we have
$(u, v) \in  P(M, xy)$.
The result follows for $u \in M$
and $v \notin M$.
Similarly, if $u,v \in M$,
then $P(M, e)=\{(u,v),(v,u)\}$.
Suppose that $u\notin M$ and $v\notin M$.
Let $P_{x,y}$ be the shortest path from $x\in M$
to $y\in V(G)$,
and hence there is no
the shortest path $P_{x,y}$ such that
$uv \notin E(P_{x,y})$,
and hence
$P(M, uv)= \emptyset$.
\end{proof}

The following corollary is immediate.
\begin{proposition}
Let $K_n$ be a complete graph, and let $M\subseteq V(K_n)$. Then
$$
0 \leq |P_G(M,uv)| \leq 2,
$$
where $uv\in E(K_n)$. Furthermore, $|P_G(M,uv)|=0$ if and only if $u,v\notin M$; $|P_G(M,uv)|=2$ if and only if $u,v\in M$; $|P_G(M,uv)|=1$ for otherwise.
\end{proposition}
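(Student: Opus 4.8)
The plan is to read this off directly from Lemma~\ref{lem:COM}, since the proposition is an immediate corollary: the lemma already determines the set $P_G(M,uv)$ exactly in each of the four cases according to whether $u$ and $v$ lie in $M$, so all that remains is to take cardinalities. Here I treat $P_G(M,uv)$ and $P(M,uv)$ as the same object, the subscript merely making the ambient graph explicit.

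First I would invoke Lemma~\ref{lem:COM} to split the argument into its four cases. When $u,v\in M$, the lemma gives $P_G(M,uv)=\{(u,v),(v,u)\}$; since $u\neq v$ (as $uv$ is an edge), these are two distinct ordered pairs, so $|P_G(M,uv)|=2$. When exactly one of $u,v$ lies in $M$, the lemma returns a singleton, either $\{(u,v)\}$ or $\{(v,u)\}$, so $|P_G(M,uv)|=1$. When $u,v\notin M$, the lemma gives $P_G(M,uv)=\emptyset$, so $|P_G(M,uv)|=0$.

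These four evaluations simultaneously yield the stated bound $0\leq |P_G(M,uv)|\leq 2$ and the three equivalences: $|P_G(M,uv)|=0$ exactly when $u,v\notin M$, $|P_G(M,uv)|=2$ exactly when $u,v\in M$, and $|P_G(M,uv)|=1$ in the remaining mixed case. Because the four cases are mutually exclusive and exhaust all possibilities for the membership pattern of the pair $\{u,v\}$ in $M$, each equivalence holds in both directions with no further argument needed.

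I expect essentially no obstacle here; the only point that warrants a moment's care is the observation that $u\neq v$, so that $(u,v)$ and $(v,u)$ are genuinely distinct and the first case contributes $2$ rather than $1$. Everything else is bookkeeping over the four cases supplied by Lemma~\ref{lem:COM}.
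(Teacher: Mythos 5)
Your proposal is correct and follows essentially the same route as the paper's own proof: both read off the four cases directly from Lemma~\ref{lem:COM} and take cardinalities. Your additional remark that $u\neq v$ guarantees $(u,v)$ and $(v,u)$ are distinct is a small point of care the paper leaves implicit.
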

\begin{proof}
For any $uv \in E(G)$ and $M\in V(G)$,
if $u,v \notin M$, then it follows from Lemma \ref{lem:COM} that
$P_G(M,uv)=\emptyset$, and
hence $|P_G(M,uv)|=0$.
If $u,v \in M$, then
it follows from Lemma \ref{lem:COM} that
$P_G(M,uv)=\{(u,v),(v,u)\}$, and so $|P_G(M,uv)|=2$.
Similarly, for other case, we have
$|P_G(M,uv)|=1$.
\end{proof}

\section{Results for $EM(x)$}
For $EM(x)$, we can observe some basic properties of distance-edge-monitoring sets.
Obviously, for any bridge edge $e \in E(G)$, the edge $e\in EM(x)$, which is given by Foucaud {\it et al.}~in \cite{FKKMR21}, see  Theorem \ref{Th-Bridge}.
\begin{theorem}{\upshape\cite{FKKMR21}}
\label{Th-Bridge}
Let $G$ be a connected graph and let $e$ be a bridge edge of $G$. For any vertex $x$ of $G$,
we have $e\in EM(x)$.
\end{theorem}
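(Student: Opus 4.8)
The plan is to exploit the defining property of a bridge: removing it disconnects the graph, which forces certain distances to jump from finite to infinite. Write $e = uv$. Since $e$ is a bridge of the connected graph $G$, the graph $G - e$ has exactly two connected components; call them $C_u$ (the one containing $u$) and $C_v$ (the one containing $v$), so that $V(G) = V(C_u) \cup V(C_v)$ is a partition with $u \in V(C_u)$ and $v \in V(C_v)$.

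First I would fix an arbitrary vertex $x \in V(G)$ and observe that $x$ lies in exactly one of the two components. The argument then splits into two symmetric cases. If $x \in V(C_u)$, I would take the witness vertex to be $w = v$; if $x \in V(C_v)$, I would take $w = u$. In either case, $x$ and $w$ lie in \emph{distinct} components of $G - e$, so there is no $x$--$w$ path in $G - e$ and hence $d_{G-e}(x, w) = \infty$. On the other hand, $G$ is connected, so $d_G(x, w)$ is finite. (Here I am using the standard convention that the distance between vertices in different components is infinite; equivalently, one argues that every $x$--$w$ path in $G$ must traverse $e$, so deleting $e$ destroys all such paths.)

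Combining these two facts gives $d_G(x, w) \neq d_{G-e}(x, w)$, so the pair $(x, w)$ belongs to $P(\{x\}, e)$. By the definition of $EM(x)$, the existence of such a vertex $w$ shows $e \in EM(x)$, which is exactly the desired conclusion. Since $x$ was arbitrary, the claim holds for every vertex of $G$.

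There is essentially no hard obstacle here, as the result is a direct consequence of the bridge property; the only point requiring mild care is the bookkeeping of which endpoint of $e$ to use as the witness depending on the component containing $x$, and making explicit the convention that the distance across the two components is infinite so that the inequality $d_G(x,w) \neq d_{G-e}(x,w)$ is justified rather than merely asserted.
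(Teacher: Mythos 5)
Your proof is correct. Note that the paper itself does not prove this statement---it is quoted verbatim from Foucaud \emph{et al.}~\cite{FKKMR21} as a known result---so there is no in-paper argument to compare against; your argument (both endpoints of the bridge lie in different components of $G-e$, so the endpoint opposite $x$ serves as a witness $w$ with $d_{G-e}(x,w)=\infty \neq d_G(x,w)$) is the standard and essentially the only natural proof, and your explicit handling of the infinite-distance convention, equivalently the observation that every $x$--$w$ path in $G$ must traverse $e$, is exactly the care the definition of $P(\{x\},e)$ requires.
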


The following corollary is immediate.
\begin{corollary}
For a vertex $v$ of a tree $T$,  we have
$EM(v)=E(T)$.
\end{corollary}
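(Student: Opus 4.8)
The plan is to reduce the statement directly to Theorem~\ref{Th-Bridge}. The key structural observation is that in a tree every edge is a bridge: since a tree is connected and acyclic, deleting any edge $e \in E(T)$ leaves a graph with no path between the two endpoints of $e$, so $T - e$ is disconnected and $e$ is a cut edge of $T$.

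With this observation in hand, I would fix an arbitrary vertex $v$ of $T$ and an arbitrary edge $e \in E(T)$. Because $e$ is a bridge of the connected graph $T$, Theorem~\ref{Th-Bridge} applies and yields $e \in EM(v)$. Since $e$ was an arbitrary edge, this shows $E(T) \subseteq EM(v)$.

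For the reverse inclusion I would simply invoke the definition of $EM(v)$, namely that $EM(v) = \{e \mid e \in E(G) \text{ and } P(\{v\}, e) \neq \emptyset\}$, from which $EM(v) \subseteq E(T)$ is immediate. Combining the two inclusions gives $EM(v) = E(T)$, as required.

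I do not anticipate any genuine obstacle here; the only point needing care is making explicit that acyclicity forces every edge of a tree to be a bridge, after which the result is an immediate consequence of Theorem~\ref{Th-Bridge} together with the trivial containment $EM(v) \subseteq E(T)$.
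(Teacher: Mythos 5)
Your proposal is correct and follows essentially the same route as the paper: both note that every edge of a tree is a bridge, apply Theorem~\ref{Th-Bridge} to obtain $E(T)\subseteq EM(v)$, and combine this with the trivial containment $EM(v)\subseteq E(T)$. Your version is in fact slightly more careful, since you explicitly justify why each edge is a bridge, which the paper states without argument.
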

\begin{proof}
For a vertex $v$ of a tree $T$, we have $EM(v)\subset E(T)$. Since any edge $e\in E(T)$ is a bridge edge of $T$, it follows from
Theorem \ref{Th-Bridge} that $e\in EM(v)$ for any vertex $v\in V(T)$, and
hence $E(T)\subset EM(v)$.
\end{proof}

\begin{theorem}{\upshape\cite{FKKMR21}}\label{Th-cover}
Let $G $ be a connected graph with a vertex $x$ of $G$. The following two conditions are equivalent:

$(1)$ $EM(x)$ is the set of edges incident with $x$.

$(2)$ For $y\in V(G)-N_G[x]$,
there exist two shortest paths from $x$ to $y$ sharing at most one edge.
\end{theorem}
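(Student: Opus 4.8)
The plan is to reduce both conditions to the combinatorial structure of shortest paths from $x$, encoded by the layered ``shortest-path digraph'' $D$ obtained by orienting each edge $ab$ from $a$ to $b$ whenever $d_G(x,a)+1=d_G(x,b)$ (edges joining two vertices in the same layer are discarded, since no shortest path leaving $x$ ever uses such an edge). The first step is a characterization of $EM(x)$ in these terms: a forward edge $ab$ with $d_G(x,b)=d_G(x,a)+1$ lies in $EM(x)$ if and only if $b$ has a \emph{unique} in-neighbour in $D$, that is, $a$ is the only neighbour of $b$ at distance $d_G(x,b)-1$ from $x$. To prove this I would observe that $e\in EM(x)$ means some vertex $v$ has \emph{all} of its shortest $x$--$v$ paths through $e$; cutting such a path at $b$ shows this forces every shortest $x$--$b$ path to end with $ab$, which is equivalent to $b$ having no second parent. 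In particular every edge incident with $x$ always belongs to $EM(x)$ (its far endpoint has only $x$ as a closer neighbour), so condition $(1)$ becomes the statement that no vertex at distance at least $2$ from $x$ has a unique parent in $D$.

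For the direction $(1)\Rightarrow(2)$ I would fix $y\notin N_G[x]$ and work inside the sub-digraph $D_y$ of $D$ spanned by the ancestors of $y$. Under $(1)$ every vertex of $D_y$ at distance at least $2$ has in-degree at least $2$; tracing back from any distance-$2$ ancestor of $y$ (one exists because $d_G(x,y)\ge 2$) also produces at least two distinct distance-$1$ ancestors, so no single edge out of $x$ lies on all shortest $x$--$y$ paths either. Hence no single edge is an $x$--$y$ cut in $D_y$, and a Menger-type argument supplies two shortest $x$--$y$ paths meeting the required sharing bound. The routine part is checking that the only edge the two constructed paths can be forced to share is one incident with $x$, which I would handle by selecting, layer by layer from $y$ down towards $x$, two distinct in-neighbours and merging them only at the final step when unavoidable.

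For the converse $(2)\Rightarrow(1)$ I would argue by contraposition. Assuming $(1)$ fails, choose a non-incident edge $ab\in EM(x)$, so its head $b$ (at distance $k\ge 2$) has $a$ as its unique parent, whence \emph{every} shortest $x$--$b$ path ends with $ab$. The candidate witness is $y:=b$, and the goal is to show that any two shortest $x$--$b$ paths are pinned together along enough edges to violate $(2)$. I expect this to be the main obstacle: when $k=2$ the shortest $x$--$b$ path is unique and the contradiction is immediate, but when $k\ge 3$ and $a$ has several parents the two paths can diverge before reaching $a$, so a naive choice of witness may fail. To overcome this I would follow the chain of forced final edges backwards, replacing $b$ by a unique-parent vertex of \emph{minimum} distance from $x$ and tracking how the unique-parent property propagates down the chain; this bookkeeping is where the real difficulty lies. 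Throughout I would invoke Theorem~\ref{Th-Bridge} to dispose of bridge edges at once, since a non-incident bridge automatically lies in $EM(x)$ and forces every shortest path to its far side through it, which streamlines the contrapositive.
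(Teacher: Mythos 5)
The first thing to note is that the paper itself contains no proof to compare against: Theorem~\ref{Th-cover} is quoted without proof from \cite{FKKMR21}, so your proposal has to be judged on its own merits. Its first half stands up. Your unique-parent characterization of $EM(x)$ is exactly Theorem~\ref{th-EM} of the paper (you could simply cite it rather than rederive it), and your Menger argument for $(1)\Rightarrow(2)$ is sound and in fact proves more than you claim: since every parent of a vertex lying on a shortest $x$--$y$ path again lies on one, the ancestor digraph $D_y$ inherits in-degree at least $2$ at every vertex at distance at least $2$ from $x$, and it contains at least two distance-$1$ vertices whenever $d_G(x,y)\geq 2$; hence no single edge lies on all shortest $x$--$y$ paths, the minimum $x$--$y$ edge cut of $D_y$ is at least $2$, and Menger's theorem yields two shortest $x$--$y$ paths sharing \emph{no} edge at all. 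Your worry that the two paths might be forced to share the initial edge is unfounded, and the layer-by-layer merging step is unnecessary.

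The second half, however, has a fatal gap, located precisely where you said ``the real difficulty lies'': no bookkeeping along the chain of forced last edges can succeed, because the implication $(2)\Rightarrow(1)$ is \emph{false} for the statement as printed. Take $V(G)=\{x,a_1,a_2,b,c\}$ and $E(G)=\{xa_1,\,xa_2,\,a_1b,\,a_2b,\,bc\}$, a $4$-cycle through $x$ with a pendant edge at the antipodal vertex $b$. Here $bc$ is a bridge, so $bc\in EM(x)$ by Theorem~\ref{Th-Bridge} (equivalently, $c$ has the unique parent $b$), and $(1)$ fails; yet $(2)$ holds at every $y\in V(G)-N_G[x]$: the paths $xa_1b$ and $xa_2b$ share no edge, and $xa_1bc$ and $xa_2bc$ share exactly one edge, namely $bc$. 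In particular, your unique-parent vertex of minimum distance is $c$ itself, and it fails as a witness, as does every other vertex, so no propagation argument can be completed. The equivalence becomes true---and your outline then works essentially verbatim---if ``sharing at most one edge'' is strengthened to ``sharing no edge'' (evidently the intended reading): your Menger argument already produces edge-disjoint paths for $(1)\Rightarrow(2)$, and for the converse your unique-parent lemma makes the naive witness $y:=b$ work immediately with no backward chain, since every shortest $x$--$b$ path ends with the edge $ab$ and any two of them therefore share it. So the honest verdict is that the literal statement cannot be proved; under the corrected reading your approach is right, and the step you predicted to be hardest becomes trivial.
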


Now, let's investigate the edges of  $EM(x)$ in $G$. Firstly, we introduced the following result, which is given in Foucaud {\it el al.}~\cite{FKKMR21}.
\begin{theorem}{\upshape\cite{FKKMR21}}
\label{Th-Ncover}
Let $G $ be a connected graph with a vertex $x$ of $G$ and for any $y\in N(x)$, then,
we have $xy \in EM(x)$.
\end{theorem}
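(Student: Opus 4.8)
The statement to prove is Theorem~\ref{Th-Ncover}: for any vertex $x$ and any neighbor $y \in N(x)$, the edge $xy \in EM(x)$.

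The plan is to show directly that the edge $e = xy$ is monitored by $x$, i.e., that $P(\{x\}, e) \neq \emptyset$. By the definition of $EM(x)$, it suffices to exhibit a single vertex $v$ for which $d_G(x,v) \neq d_{G-e}(x,v)$. The natural candidate is $v = y$ itself. First I would observe that since $y \in N(x)$, the edge $xy$ exists in $G$, so $d_G(x,y) = 1$. Now I would delete $e = xy$ and analyze $d_{G-e}(x,y)$. Because $G-e$ no longer contains the edge $xy$, any $x$--$y$ path in $G-e$ must use at least two edges (it cannot be the single edge $xy$), so $d_{G-e}(x,y) \geq 2$.

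The only subtlety is whether $x$ and $y$ remain connected in $G-e$ at all; if $xy$ is a bridge, then $d_{G-e}(x,y) = \infty$, but in either case $d_{G-e}(x,y) \geq 2 > 1 = d_G(x,y)$. Thus $d_G(x,y) \neq d_{G-e}(x,y)$, which gives $(x,y) \in P(\{x\}, xy)$, and hence $P(\{x\}, xy) \neq \emptyset$. By the definition of $EM(x)$, this immediately yields $xy \in EM(x)$, completing the argument.

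There is essentially no hard part here: the result is a direct consequence of the fact that the unique length-one connection between adjacent vertices is the edge itself, so removing that edge strictly increases (or makes infinite) their distance. One could alternatively invoke Theorem~\ref{Th-Bridge} when $xy$ happens to be a bridge, but the unified distance comparison via $v = y$ handles both the bridge and non-bridge cases simultaneously and is the cleaner route. I would therefore present the single-line distance inequality $d_{G-e}(x,y) \geq 2 > 1 = d_G(x,y)$ as the core of the proof and conclude.
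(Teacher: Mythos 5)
Your proof is correct: taking $v=y$ as the witness, the inequality $d_{G-xy}(x,y)\geq 2 > 1 = d_G(x,y)$ (with the bridge case $d_{G-xy}(x,y)=\infty$ subsumed) immediately gives $(x,y)\in P(\{x\},xy)$ and hence $xy\in EM(x)$. The paper itself states this theorem as a cited result from Foucaud et al.~\cite{FKKMR21} without reproducing a proof, and your argument is exactly the standard one-line justification underlying it.
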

By Theorem \ref{Th-Ncover}, we can obtain a lower bound on $EM(x)$ for any graph $G$ with minimum degree $\delta$, the description is as follows.
\begin{corollary}\label{cor-3-4}
Let $G$ be a connected graph. For any $x\in V(G)$, we have
$$
|EM(x)|\geq |N_G(x)|\geq \delta(G),
$$
with equality if and only if
$G$ is a regular graph such that
there exist two shortest paths from $u$ to $x$ sharing at most one edge, where $u\in V(G)-N_G[x]$. For example, a balanced complete bipartite graph $K_{n,n}$.
\end{corollary}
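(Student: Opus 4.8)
The plan is to separate the inequality chain from its equality case, handling the two inequalities by two different earlier results. For the lower bound I would first invoke Theorem~\ref{Th-Ncover}: since every edge $xy$ with $y \in N_G(x)$ lies in $EM(x)$, and since $G$ is simple these are exactly $|N_G(x)|$ distinct edges incident with $x$, we immediately obtain $|EM(x)| \geq |N_G(x)|$. The second inequality $|N_G(x)| \geq \delta(G)$ is just the definition of the minimum degree, so the full chain $|EM(x)| \geq |N_G(x)| \geq \delta(G)$ follows with essentially no computation.

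For the equality case I would read the assertion ``equality holds'' as the statement that $|EM(x)| = |N_G(x)| = \delta(G)$ holds for every $x \in V(G)$, which is the only reading compatible with the conclusion that $G$ is regular (for a single fixed vertex, $|N_G(x)| = \delta(G)$ says merely $d(x)=\delta(G)$ and does not force regularity). Under this reading the equality splits into two simultaneous requirements, one per inequality. The requirement $|N_G(x)| = \delta(G)$ for all $x$ says precisely that every vertex has degree $\delta(G)$, i.e.\ that $G$ is regular, and this equivalence is immediate in both directions. The requirement $|EM(x)| = |N_G(x)|$ is the crux: because the $|N_G(x)|$ edges incident with $x$ are already forced into $EM(x)$ by Theorem~\ref{Th-Ncover}, equality of cardinalities is equivalent to $EM(x)$ containing no further edge, that is, to $EM(x)$ being exactly the set of edges incident with $x$. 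By Theorem~\ref{Th-cover} this is in turn equivalent to its condition~$(2)$: for every $y \in V(G)-N_G[x]$ there exist two shortest paths from $x$ to $y$ sharing at most one edge. Combining the two requirements over all $x$ yields the stated ``regular plus two-shortest-paths'' characterization.

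The main obstacle is the bookkeeping in the $|EM(x)| = |N_G(x)|$ step, where I must be careful that the containment supplied by Theorem~\ref{Th-Ncover} together with equal cardinalities genuinely forces set equality, so that Theorem~\ref{Th-cover} applies verbatim; the remainder is routine once the quantifier over $x$ is tracked consistently across both inequalities. Finally, to justify the displayed example I would check on $K_{n,n}$ that it is $n$-regular, so $|N_G(x)| = n = \delta(K_{n,n})$ for every $x$, and that any two nonadjacent vertices $x,u$ (necessarily in the same part) are joined by the $n$ paths $xwu$, one for each $w$ in the opposite part; any two of these share only the endpoints $x$ and $u$ and hence no edge, which verifies condition~$(2)$ of Theorem~\ref{Th-cover} and shows that equality is attained and the characterization nonvacuous.
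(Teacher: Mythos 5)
Your proposal is correct and matches the paper's (implicit) argument: the paper states this corollary without a written proof, presenting it as an immediate consequence of Theorem~\ref{Th-Ncover} (which forces the $|N_G(x)|$ incident edges into $EM(x)$, giving $|EM(x)|\geq |N_G(x)|\geq \delta(G)$) and Theorem~\ref{Th-cover} (which converts $EM(x)$ being exactly the incident edges into the two-shortest-paths condition), and that is precisely your route. Your explicit reading of the equality case as quantified over all $x\in V(G)$ (needed to extract regularity) and your verification on $K_{n,n}$ are careful elaborations of the intended argument rather than deviations from it.
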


\begin{theorem}{\upshape\cite{FKKMR21}} \label{Th-forest}
For a vertex $x$ of a graph $G$, the set of edges $EM(x)$ induces a forest.
\end{theorem}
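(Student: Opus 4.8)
The plan is to analyze $EM(x)$ through the distance layering from $x$. Write $\ell(u)=d_G(x,u)$ and set $L_i=\{u\in V(G):\ell(u)=i\}$. The first observation I would record is that every edge of $EM(x)$ joins two consecutive layers: if $e=ab$ with $\ell(a)=\ell(b)$, then $e$ lies on no shortest path emanating from $x$ (the value $\ell$ strictly increases along such a path), so $d_{G-e}(x,v)=d_G(x,v)$ for every $v$ and therefore $e\notin EM(x)$. Hence I may assume each $e=ab\in EM(x)$ satisfies $\ell(b)=\ell(a)+1$, and I will call $a$ the \emph{lower} endpoint and $b$ the \emph{upper} endpoint.

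The heart of the argument is a local uniqueness lemma: if $e=ab\in EM(x)$ with $\ell(b)=\ell(a)+1$, then $a$ is the \emph{only} neighbour of $b$ in $L_{\ell(b)-1}$. To prove it I would start from the vertex $v$ witnessing $e\in EM(x)$, so that $d_{G-e}(x,v)>d_G(x,v)$ and every shortest $x$--$v$ path uses $e$. Since $\ell$ increases along such a path, each of them traverses $e$ from $a$ to $b$, reaching $b$ at distance $\ell(b)$; thus $d_G(x,v)=\ell(b)+d_G(b,v)$, placing $v$ strictly above $b$. If $b$ had a second neighbour $a'\neq a$ in $L_{\ell(b)-1}$, I would reroute: concatenate a shortest $x$--$a'$ path, the edge $a'b$, and the terminal $b$--$v$ segment of a fixed shortest $x$--$v$ path. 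A short layer check shows the first piece stays in layers $\le \ell(b)-1$ and the last piece in layers $\ge \ell(b)$, so the concatenation is again a shortest $x$--$v$ path but avoids $e$, contradicting the choice of $v$.

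With the lemma in hand the forest property follows from a local-maximum argument. Suppose, for contradiction, that $EM(x)$ contained a cycle $C=w_0w_1\cdots w_{k-1}w_0$ with $k\ge 3$. Along $C$ the value $\ell$ changes by exactly $\pm1$ at each step and returns to its start, so it attains a (strict) maximum at some $w_j$, forcing $\ell(w_{j-1})=\ell(w_{j+1})=\ell(w_j)-1$ with $w_{j-1}\neq w_{j+1}$ (indices mod $k$). Then both $w_{j-1}w_j$ and $w_{j+1}w_j$ belong to $EM(x)$ and both have $w_j$ as upper endpoint, so the lemma forces $w_j$ to have a unique neighbour in $L_{\ell(w_j)-1}$; but $w_{j-1}$ and $w_{j+1}$ are two distinct such neighbours, a contradiction. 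Hence $EM(x)$ is acyclic, i.e.\ it induces a forest.

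I expect the main obstacle to be the uniqueness lemma, and within it the rerouting step: one must verify carefully that the substitute path genuinely avoids $e$, which is exactly where the layer bookkeeping (the prefix never reaching layer $\ell(b)$ and the suffix never dropping below it) is essential. Everything after that — the reduction to consecutive-layer edges and the local-maximum contradiction — is routine.
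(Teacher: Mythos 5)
Your proof is correct. There is no in-paper argument to compare against: the paper states this result (Theorem~\ref{Th-forest}) without proof, importing it from Foucaud et al.~\cite{FKKMR21}. Note, however, that your ``local uniqueness lemma'' is precisely the forward direction of the characterization the paper does quote as Theorem~\ref{th-EM} ($uv \in EM(x)$ if and only if $u \in N_i(x)$ and $v$ is the only neighbour of $u$ in $N_{i-1}(x)$), and your rerouting argument --- prefix confined to layers $\le \ell(b)-1$, the edge $a'b$, suffix confined to layers $\ge \ell(b)$, total length $(\ell(b)-1)+1+d_G(b,v)=d_G(x,v)$ --- is a valid derivation of it. Given that lemma, the completion is sound: an intra-layer edge lies on no shortest path from $x$ and so belongs to no $EM(x)$, hence every $EM(x)$-edge changes $\ell$ by exactly $\pm 1$; a cycle in $EM(x)$ would then have an $\ell$-maximal vertex $w_j$ whose two distinct cycle-neighbours both lie in $L_{\ell(w_j)-1}$, contradicting uniqueness. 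Two cosmetic remarks: the maximum need not be \emph{strict} (any $\ell$-maximal vertex of the cycle works, since the $\pm 1$ steps already force both cycle-neighbours into the layer below), and the witness $v$ need not lie \emph{strictly} above $b$ (one may have $v=b$, in which case the suffix is empty and the rerouting goes through unchanged).
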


For a graph $G$ and a vertex $x \in V(G)$, one can derive the edge set $EM(x)$ from $G$ by Algorithm~\ref{algorithm:EM_v}. This algorithm is based on
the breadth-first spanning tree algorithm.
In the process of finding breadth-first spanning trees, we delete some edges that cannot be monitored by vertex $x$, and obtain the edge set $EM(x)$
when the algorithm terminates.
The time complexity of the breadth-first search tree algorithm is $O(|V(G)|+|E(G)|)$.
In Algorithm \ref{algorithm:EM_v}, we only add
the steps of deleting specific edges and checking neighbor vertex
shown in Lines 17--26.

\medskip
\begin{algorithm}[!htbp]
\small
\caption{The algorithm of finding an edge set $EM(x)$ in $G$}
\begin{multicols}{2}\label{algorithm:EM_v}
\begin{algorithmic}[1]
\Require a  graph $G$ and  a vertex $x\in V(G)$;
\Ensure A edge set  $EM(x)$ in $G$;
\For{ each vertx $u\in V(G)-\{x\}$}
\State colour[u] $\gets$ White
\State d[u] $\gets \infty$
\EndFor
\State $EM(x)\gets E(G)$
\State d[x] $\gets 0$
\State Q $\gets \emptyset$
\State  $\textbf{Enqueue}[Q,x]$
\While{$ Q \neq \emptyset $}
\State
$ u \gets \textbf{Dequeue}[Q]$
\State $ N'[u] \gets \emptyset $
\For{ each vertx $v \in Adj[u]$}
\If {colour[v] $\gets$ White}
\State $ N'[u]  \gets N'[u] \cup \{v\} $
\State colour[v] $\gets$ Gray
\State $ d[v] \gets d[u]+1$
\State Enqueue[Q,v]
\EndIf
\For {$v_i,v_j \in N'[u] $}
\If {$ v_iv_j\in E(G)$}\\
~~~~~~~~~~~~~~~~~$EM(x)=EM(x)-v_iv_j$
\EndIf
\EndFor
\State Dv $\gets \emptyset$
\For{each vertx $v_o \in Adj[v]$}
\If{$\textrm{colour}[v_o]=\textrm{ Gray}$}
\State $D_v \gets D_v  \cup \{v_o\} $
\EndIf
\EndFor
\If{$|D_v| \geq 1$ }
\For{$v_o\in D_v$}\\
~~~~~~~~~~~~~~~~~~~~$EM(x)=EM(x)-vv_o$
\EndFor
\EndIf
\EndFor
\State colour[u] $\gets$ DarkGary
\EndWhile
\State \Return$EM(x)$
\end{algorithmic}
\end{multicols}
\end{algorithm}

\medskip
We now give upper and lower bounds on $EM(x)$ in terms of the order $n$.
\begin{proposition}
Let $G$ be a connected graph with $|V(G)|\geq 2$. For any $v \in V(G)$, we have
$$
1\leq |EM(v)| \leq |V(G)|-1.
$$
Moreover, the bounds are sharp.
\end{proposition}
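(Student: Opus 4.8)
The plan is to read both inequalities straight off the structural results already established for $EM(\cdot)$, and then to supply two explicit extremal graphs for the sharpness claim. Neither direction requires any new machinery, so the write-up will be short.

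For the lower bound I would argue as follows. Since $G$ is connected and $|V(G)|\geq 2$, the vertex $v$ has at least one neighbour $y$. Theorem~\ref{Th-Ncover} then yields $vy\in EM(v)$, so $EM(v)\neq\emptyset$ and hence $|EM(v)|\geq 1$. (Equivalently, one may quote Corollary~\ref{cor-3-4}, which gives $|EM(v)|\geq|N_G(v)|\geq 1$.)

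For the upper bound I would invoke Theorem~\ref{Th-forest}, which states that the edges of $EM(v)$ induce a forest. The vertices incident to these edges form a subset of $V(G)$, so the induced forest has at most $|V(G)|$ vertices; as a forest on $k$ vertices has at most $k-1$ edges, this gives $|EM(v)|\leq |V(G)|-1$. The one point to state carefully here is that the forest need not span all of $V(G)$, but it can certainly span no more than $|V(G)|$ vertices, so $|V(G)|-1$ is the worst case; this is the only step where a moment's thought is needed, and it is hardly an obstacle.

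Finally, for sharpness I would exhibit two graphs. Taking $G=K_2$ with $v$ either endpoint, the unique edge is a bridge, so $EM(v)=E(K_2)$ and $|EM(v)|=1$, attaining the lower bound. For the upper bound I would take $G=K_n$: by Lemma~\ref{lem:COM}, every edge $vw$ with $w\neq v$ satisfies $vw\in EM(v)$, while no edge avoiding $v$ is monitored by $v$, so $EM(v)=\{vw : w\in V(G),\, w\neq v\}$ has exactly $n-1=|V(G)|-1$ edges. (One could equally use any tree $T$, since each of its edges is a bridge and Theorem~\ref{Th-Bridge} forces $EM(v)=E(T)$, again of size $n-1$.) This establishes that both bounds are attained.
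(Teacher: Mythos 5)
Your proof is correct and follows essentially the same route as the paper: the lower bound via Corollary~\ref{cor-3-4} (equivalently Theorem~\ref{Th-Ncover}) and the upper bound via the forest structure of $EM(v)$ from Theorem~\ref{Th-forest}, with at most $|V(G)|$ vertices giving at most $|V(G)|-1$ edges. In fact you go slightly beyond the paper's own write-up by supplying explicit extremal examples ($K_2$ for the lower bound, $K_n$ or any tree for the upper bound), which the paper asserts but does not exhibit.
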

\begin{proof}
For any vertex $v \in V(G)$,
it follows from Theorem \ref{Th-forest} that
the set of edges $EM(x)$ induces a forest $F$
in $G$, and hence $|EM(v)|\leq |E(F)|\leq |E(T)|=|V(G)|-1$, where $T$ is a spanning tree of $G$.
Since $G$ is a connected graph, it follows from
Corollary \ref{cor-3-4} that
$|EM(x)| \geq \delta(G)\geq 1$,
and hence $|EM(v)| \geq 1$.
\end{proof}

Given a vertex $x$ of a graph $G$ and an integer $i$, let $N_i(x)$ denote the set of vertices at distance $i$ of $x$ in $G$.
Is there a way to quickly determine whether $e \in EM(v)$ or $e \notin EM(v)$?
Foucaud {\it et al.}~\cite{FKKMR21} gave the following characterization about edge $uv$ in $EM(x)$.

\begin{theorem}{\upshape\cite{FKKMR21}} \label{th-EM}
Let $x$ be a vertex of a connected graph $G$. Then,
$uv \in EM(x)$ if and only if
$u \in N_i(x)$ and $v$ is the
only neighbor of $u$ in $N_{i-1}(x)$,
for some integer $i$.
\end{theorem}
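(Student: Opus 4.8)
My plan is to first reformulate membership in $EM(x)$ in a workable form. Since deleting an edge can never decrease a distance, for every vertex $w$ we have $d_{G-uv}(x,w)\ge d_G(x,w)$; hence $uv\in EM(x)$ if and only if there is a vertex $w$ with $d_{G-uv}(x,w)>d_G(x,w)$, which happens exactly when every shortest path from $x$ to $w$ in $G$ uses the edge $uv$. Throughout I would use the standard fact that along any shortest path starting at $x$ the quantity $d_G(x,\cdot)$ increases by exactly $1$ at each step, so consecutive vertices of such a path lie in consecutive layers $N_0(x),N_1(x),\dots$. In particular the two endpoints of any edge lie in the same layer or in two consecutive layers.

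The first step is to dispose of same-layer edges. If $u,v\in N_i(x)$, then no shortest path from $x$ can traverse $uv$, since its two endpoints would have to differ in distance by $1$; thus $d_{G-uv}(x,w)=d_G(x,w)$ for all $w$ and $uv\notin EM(x)$, while on the other side the stated condition also fails because $v\notin N_{i-1}(x)$. So I may assume the endpoints lie in consecutive layers and label them so that $u\in N_i(x)$ and $v\in N_{i-1}(x)$; this fixes the roles of $u$ and $v$ in the statement.

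For the ``if'' direction, suppose $v$ is the unique neighbor of $u$ in $N_{i-1}(x)$, and take the witness $w=u$. Every shortest path from $x$ to $u$ has length $i$ and reaches $u$ from a neighbor lying in $N_{i-1}(x)$, which must be $v$; hence every such path ends with the edge $vu$. After deleting $uv$ the vertex $u$ has no neighbor in $N_{i-1}(x)$, so every remaining neighbor of $u$ is at distance $\ge i$ from $x$ in $G-uv$, forcing $d_{G-uv}(x,u)\ge i+1>d_G(x,u)$. Therefore $uv\in EM(x)$.

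For the ``only if'' direction, assume $uv\in EM(x)$ with witness $w$, so every shortest path from $x$ to $w$ uses $uv$; since $v$ is the lower endpoint, such a path traverses $uv$ in the order $v\to u$, giving $d_G(x,w)=d_G(x,u)+d_G(u,w)=i+d_G(u,w)$. Suppose for contradiction that $u$ had a second neighbor $v'\in N_{i-1}(x)$ with $v'\ne v$. I would then assemble a walk from $x$ to $w$ in $G-uv$ by concatenating a shortest path from $x$ to $v'$, the edge $v'u$, and a shortest path from $u$ to $w$. The main obstacle, and the step I expect to treat most carefully, is verifying that this walk avoids $uv$ and has length exactly $d_G(x,w)$: the shortest path from $x$ to $v'$ stays in the layers $N_0(x),\dots,N_{i-1}(x)$ and so cannot contain $u$ (hence not the edge $uv$), while a shortest path from $u$ to $w$ cannot begin with $u\to v$, since $q_1=v$ would give $d_G(v,w)=d_G(u,w)-1$ and, combined with $d_G(x,v)=i-1$, contradict $d_G(x,w)=i+d_G(u,w)$; as $u$ occurs only as the initial vertex of that path, $uv$ appears nowhere on it. The concatenation therefore has length $(i-1)+1+d_G(u,w)=d_G(x,w)$ and avoids $uv$, so $d_{G-uv}(x,w)\le d_G(x,w)$, contradicting $d_{G-uv}(x,w)>d_G(x,w)$. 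Hence $v$ is the only neighbor of $u$ in $N_{i-1}(x)$, which completes the proof.
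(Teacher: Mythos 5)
The paper states this theorem only as a quoted result from Foucaud et al.~\cite{FKKMR21} and reproduces no proof of its own, so there is no internal proof to compare against; your blind proof is the natural BFS-layer argument that the cited source's characterization rests on. Your argument is correct and complete: the reformulation of $uv\in EM(x)$ as a strict distance increase from $x$, the elimination of same-layer edges, the witness $w=u$ in the ``if'' direction, and, in the ``only if'' direction, the detour through a hypothetical second neighbor $v'\in N_{i-1}(x)$ --- including the two checks that matter, namely that the shortest $x$--$v'$ path stays in layers below $N_i(x)$ and so avoids $uv$, and that a shortest $u$--$w$ path cannot start with the step $u\to v$ --- are all sound and handle the edge cases (disconnection after deletion, $i=1$ with $v=x$) correctly.
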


The following results are immediate from Theorem \ref{th-EM}. These results show that it is easy to determine $e\notin EM(v)$ for $v\in V(G)$.

\begin{corollary}\label{cor-EM}
Let $G$ be a connected graph, and $x\in V(G)$. Let $\mathcal{P}_{x,y}$ denote the set of
shortest paths from $x$ to $y$. Suppose that $uv$ is an edge of
$G_b$ satisfying one of the following conditions.

$(1)$ there exists an odd cycle $C_{2k+1}$ containing the vertices $x',u,v$ such
that $V(\mathcal{P}_{x,x'})\cap V(C_{2k+1})=x'$
and $d_G(x',u)=d_G(x',v)=k$.

$(2)$ there exists an even cycle $C_{2k}$ containing the vertices $x',u,v$ such that $V(\mathcal{P}_{x,x'})\cap V(C_{2k})=x'$, $d_G(x',u)=k-1$ and $d_G(x',v)=k$.

Then $uv \notin EM(x)$.
\end{corollary}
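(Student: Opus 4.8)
The plan is to deduce Corollary~\ref{cor-EM} directly from the characterization in Theorem~\ref{th-EM}, which tells us that $uv\in EM(x)$ precisely when, writing $u\in N_i(x)$, the vertex $v$ is the \emph{unique} neighbor of $u$ lying in $N_{i-1}(x)$. Thus to prove $uv\notin EM(x)$ it suffices, for each of the two configurations, to exhibit a second neighbor of $u$ in the next-closer distance layer, or else to show that $v$ itself is not in the layer $N_{i-1}(x)$ at all (so that $u$ has \emph{no} neighbor in $N_{i-1}(x)$ realizing the edge $uv$ as the monitored one). The key preliminary fact I would establish is that, since $V(\mathcal{P}_{x,x'})\cap V(C)=\{x'\}$ in both cases, every shortest path from $x$ to any vertex $w$ of the cycle $C$ must pass through $x'$, so that $d_G(x,w)=d_G(x,x')+d_G(x',w)$. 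This lets me transfer the distance conditions stated relative to $x'$ into statements about the layers $N_i(x)$.

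First I would handle case~$(1)$, the odd cycle $C_{2k+1}$. Set $r=d_G(x,x')$. Using the additivity above, $d_G(x,u)=r+d_G(x',u)=r+k$ and $d_G(x,v)=r+d_G(x',v)=r+k$, so $u$ and $v$ lie in the \emph{same} layer $N_{r+k}(x)$. Hence $uv$ is an edge joining two vertices of one layer; in the language of Theorem~\ref{th-EM}, if $u\in N_{i}(x)$ with $i=r+k$ then $v\in N_i(x)$, not $N_{i-1}(x)$, and symmetrically, so neither endpoint sees the other as its neighbor in the next-inner layer. Therefore $uv\notin EM(x)$.

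Next I would treat case~$(2)$, the even cycle $C_{2k}$. Here $d_G(x,u)=r+(k-1)$ and $d_G(x,v)=r+k$, so $u\in N_{r+k-1}(x)=N_{i-1}(x)$ and $v\in N_{r+k}(x)=N_i(x)$ with $i=r+k$; thus for the edge $uv$ the relevant endpoint to test against Theorem~\ref{th-EM} is $v$, and I must show $v$ is \emph{not} the unique neighbor of $u$ in $N_{i-1}(x)$ — equivalently, traversing the edge the other way, that $u$ has a second neighbor in $N_{i-1}(x)$ besides the one that would monitor $uv$. The decisive observation is that in an even cycle the two arcs from $x'$ to the antipodal-type pair have equal length: going around $C_{2k}$ the other way from $x'$ reaches a neighbor $v'$ of $v$ at distance $k-1$ from $x'$, so $v$ has \emph{two} distinct neighbors on the cycle (namely $u$ and $v'$) both lying in $N_{i-1}(x)$. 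By Theorem~\ref{th-EM} applied to $v\in N_i(x)$, $v$ fails to have a unique neighbor in $N_{i-1}(x)$, so $uv\notin EM(x)$.

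The main obstacle is bookkeeping the two arcs of each cycle correctly and confirming that the ``second'' neighbor genuinely lies in the claimed layer, which hinges entirely on the hypothesis $V(\mathcal{P}_{x,x'})\cap V(C)=\{x'\}$ forcing shortest paths into the cycle to enter only through $x'$; without that, a chord or an alternative entry point could shorten a distance and spoil the layer computation. I would therefore isolate that additivity lemma first and invoke it uniformly, after which both cases reduce to the parity arithmetic on the cycle and a single application of Theorem~\ref{th-EM}.
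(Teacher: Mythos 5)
Your proposal is correct and takes essentially the same route as the paper: both arguments transfer the distance hypotheses at $x'$ to statements about $x$ via the additivity $d_G(x,w)=d_G(x,x')+d_G(x',w)$ (the same reading of the condition $V(\mathcal{P}_{x,x'})\cap V(C)=\{x'\}$ that the paper uses, whose necessity you rightly flag) and then conclude via Theorem~\ref{th-EM}. The only differences are cosmetic: for case $(1)$ the paper checks directly that $d_{G-uv}(x,u)=d_G(x,u)$ and $d_{G-uv}(x,v)=d_G(x,v)$ while you observe that $u,v$ lie in a common layer $N_{r+k}(x)$, and for case $(2)$, which the paper dismisses as immediate from Theorem~\ref{th-EM}, you actually supply the needed detail that the second cycle-neighbor $v'$ of $v$ also lies in $N_{r+k-1}(x)$, so uniqueness fails in both orientations of the edge.
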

\begin{proof}
Since $d_G(x',u)=d_G(x',v)=k$, it follows that
$d_G(x',u)=d_{G-uv}(x',u)=k$
and
$d_G(x',v)=d_{G-uv}(x',v)=k$. Since $V(\mathcal{P}_{x,x'})\cap
 V(C_{2k+1})=x'$, it follows that
$d_G(x,u)=d_G(x,x')+d_G(x',u)$
and
$d_G(x,v)=d_G(x,x')+d_G(x',v)$,
and so
$d_G(x,u)=d_{G-uv}(x,u)$ and
$d_G(x,v)=d_{G-uv}(x,v)$. Clearly,
$uv\notin EM(x)$, and so $(1)$ holds.
From Theorem \ref{th-EM},
the results are immediate,
and hence $(2)$ holds.
\end{proof}

\begin{theorem}
For any $k \ (1\leq k\leq n-1)$, there exists a graph of order $n$ and a vertex $v\in V(G)$ such that $|EM(v)|=k$.
\end{theorem}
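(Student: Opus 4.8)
The plan is to prove the statement by an explicit construction: for each pair $(n,k)$ with $1\le k\le n-1$ I will exhibit \emph{one} graph of order $n$ carrying a vertex $v$ with $|EM(v)|=k$, rather than a single graph realizing all values at once. The construction I have in mind is to take a path $P_{k+1}$ on the vertices $v=u_0,u_1,\dots,u_k$ together with $n-k-1$ isolated vertices, i.e.\ $G=P_{k+1}\cup\overline{K_{n-k-1}}$ with $v$ an end-vertex of the path. This graph has exactly $n$ vertices, and its only edges are the $k$ path edges $u_0u_1,\dots,u_{k-1}u_k$, so the upper bound $|EM(v)|\le|E(G)|=k$ is immediate; the real content is to check that every one of these $k$ edges is actually monitored by $v$.

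First I would compute $EM(v)$ directly from the definition. Fix an edge $e=u_iu_{i+1}$. In $G$ one has $d_G(v,u_{i+1})=i+1$, whereas in $G-e$ the vertices $u_{i+1},\dots,u_k$ are cut off from $v$, so $d_{G-e}(v,u_{i+1})=\infty\neq i+1$; hence $(v,u_{i+1})\in P(\{v\},e)$ and $e\in EM(v)$. (Each $u_iu_{i+1}$ is a cut edge of its component, so this is simply the disconnected analogue of Theorem \ref{Th-Bridge}.) Since the isolated vertices are incident to no edge, they contribute nothing further, and therefore $EM(v)=\{u_0u_1,\dots,u_{k-1}u_k\}$, giving $|EM(v)|=k$. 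The extreme case $k=n-1$ recovers the path $P_n$, for which $EM(v)=E(P_n)$ because every edge of a tree is a bridge (Theorem \ref{Th-Bridge}); this is a useful sanity check.

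The one genuinely delicate point — and the reason the construction uses isolated vertices — is the value $k=1$. By Corollary \ref{cor-3-4} we always have $|EM(v)|\ge\deg_G(v)$, and a short argument shows that in \emph{any connected} graph of order $n\ge3$ one in fact has $|EM(v)|\ge 2$ for every $v$: if $\deg_G(v)=1$ with neighbour $c$, then each other neighbour of $c$ lies at distance $2$ from $v$ with $c$ as its unique closer neighbour, so by Theorem \ref{th-EM} every such edge is monitored, forcing $|EM(v)|\ge\deg_G(c)\ge2$. Thus $k=1$ is impossible for a connected graph on $n\ge3$ vertices, and some disconnection is unavoidable; the isolated vertices provide it uniformly for all $k$. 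The main obstacle is therefore not a calculation but this modelling point, namely that one must permit (possibly) disconnected host graphs and read inter-component distances as $\infty$. If one instead insists on a connected example in the range $2\le k\le n-1$, the plan adapts by replacing the isolated vertices with an \emph{absorbing gadget}: attach two vertices $a,b$ to the far end of a suitably shortened path and join each of the remaining vertices to both $a$ and $b$, so that by Theorem \ref{th-EM} every added vertex has two neighbours in the preceding BFS level and contributes no monitored edge; re-counting to confirm that exactly $k$ edges remain monitored is then routine.
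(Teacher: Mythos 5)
Your proof is correct, but it takes a genuinely different route from the paper's. The paper's construction is connected: take $v$ joined to all $k$ vertices of a graph $F_1$, and attach the remaining $n-k-1$ vertices (a graph $F_3$) so that each of them sends at least two edges into $V(F_1)$; then Theorem \ref{Th-Ncover} puts the $k$ edges at $v$ into $EM(v)$, while Theorem \ref{th-EM} (via Corollary \ref{cor-EM}) excludes every edge inside $F_1$, inside $F_3$, and between $F_1$ and $F_3$, because each relevant endpoint either lies in the same distance level as the other or has two neighbours in the preceding level. Your main construction $G=P_{k+1}\cup\overline{K_{n-k-1}}$ is more elementary --- every path edge is a bridge of its component, so monitoring is immediate as the disconnected analogue of Theorem \ref{Th-Bridge} --- at the price of leaving the class of connected graphs; your connected fallback gadget for $2\le k\le n-1$ (shortened path, two vertices $a,b$ at its far end, all remaining vertices joined to both) uses exactly the same killing mechanism as the paper's $F_3$ (two neighbours in the preceding BFS level, so Theorem \ref{th-EM} does not apply), and the count $(k-2)+2=k$ checks out. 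What your write-up buys that the paper's does not is the case $k=1$, $n\ge 3$: your argument that $|EM(v)|\ge\deg_G(c)\ge 2$ in every connected graph of order at least $3$ is sound (and is confirmed by the paper's own subsequent theorem that $|EM(v)|=1$ forces $G=K_2$), and it pinpoints a genuine defect in the paper's proof, which for $k=1$ and $n\ge 3$ would require at least two edges from each vertex of $F_3$ into the one-vertex set $V(F_1)$ --- impossible in a simple graph. So the paper's construction in fact only realizes $2\le k\le n-1$ (plus the trivial $k=1$, $n=2$), and the theorem as stated holds for all $1\le k\le n-1$ only if, as in your proposal, disconnected host graphs with infinite inter-component distances are admitted; your explicit flagging of this modelling point makes your treatment more careful than the paper's on precisely the boundary case where its construction breaks.
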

\begin{proof}
Let $F_1$ be a graph of order $k$ and $F_2$ be a graph obtained from $F_1$ by adding a new vertex $v$ and then adding all edges from $v$ to $V(F_1)$. Let $H$ be a graph obtained from $F_2$ and a graph $F_3$ of order $n-k-1$ such that there are at least two edges from each vertex in $F_3$ to $V(F_1)$.

From Corollary \ref{cor-3-4}, we have $|EM(v)|\geq |N_G(v)|=k$. To show $|EM(v)|\leq k$, it suffices to prove that $EM(v)=E_H[v,V(F_1)]$. Clearly, $E_H[v,V(F_1)]\subseteq EM(v)$. We need to prove that $EM(v)\subseteq E_H[v,V(F_1)]$, that is, $EM(v)\cap (E(H)\setminus E_H[v,V(F_1)])=\emptyset$. It suffices to show that for any $xy\in E(H)\setminus E_H[v,V(F_1)]$, we have $d_G(v,x)=d_{H-xy}(v,x)$ or $d_H(v,y)=d_{H-xy}(v,y)$.
Note that $E(H)\setminus E_H[v,V(F_1)]=E(F_1)\cup E(F_3)\cup E_{H}[V(F_1),V(F_3)]$. If $xy\in E(F_1)$, then $d_H(v,x)=d_{H}(v,y)=1$, and it follows from Corollary \ref{cor-EM} $(1)$ that $xy \notin EM(v)$. If $xy\in E(F_3)$, then $d_H(v,x)=d_{H}(v,y)=2$, and it follows from Corollary \ref{cor-EM} $(1)$ that $xy \notin EM(v)$. Suppose that $xy\in E_{H}[V(F_1),V(F_3)]$. Without loss of generality, let $x\in V(F_1)$ and $y\in V(F_3)$. Since there are at least two edges from $y$ to $V(F_1)$, it follows that there exists a vertex $z\in V(F_1)$ such that $zy\in E(H)$. Then $d_H(v,z)=d_{H}(v,x)=2$ and $d_{H}(v,y)=3$. From Corollary \ref{cor-EM} $(2)$, we have $xy \notin EM(v)$. From the above argument, $|EM(v)|\leq k$, and hence $|EM(v)|=k$.
\end{proof}

Graphs with small values of $|EM(v)|$ can be characterized in the following.
\begin{theorem}
For a connected graph $G$ and $v\in V(G)$, we have
$|EM(v)|=1$ if and only if $G=K_2$.
\end{theorem}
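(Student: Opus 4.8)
The plan is to establish the two implications separately, with the forward direction essentially immediate and the converse requiring a short structural argument.

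The forward implication is straightforward: if $G=K_2$ with edge $uv$, then $uv$ is a bridge, so Theorem~\ref{Th-Bridge} gives $uv\in EM(v)$, and since $uv$ is the only edge of $G$ we conclude $EM(v)=\{uv\}$, whence $|EM(v)|=1$.

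For the converse I would first pin down the degree of $v$. By Theorem~\ref{Th-Ncover}, every edge incident with $v$ lies in $EM(v)$, so $|N_G(v)|\le |EM(v)|=1$. On the other hand, $|EM(v)|=1$ forces $E(G)\neq\emptyset$, so the connected graph $G$ has at least two vertices and $v$ has at least one neighbour; hence $\deg(v)=1$. Writing $u$ for the unique neighbour of $v$, this forces $EM(v)=\{uv\}$ and $N_1(v)=\{u\}$. I would then rule out any further vertex by contradiction: suppose $n=|V(G)|\ge 3$. Since $G$ is connected and $v$ has degree $1$, the vertex $u$ must possess a neighbour $w\neq v$, and then $w\in N_2(v)$. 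Because $N_1(v)=\{u\}$, the vertex $u$ is the only neighbour of $w$ in $N_1(v)$, so Theorem~\ref{th-EM} (applied with $i=2$) yields $uw\in EM(v)$. As $uw\neq uv$, this exhibits a second edge in $EM(v)$, contradicting $|EM(v)|=1$. Hence $n=2$, and the only connected graph on two vertices is $K_2$.

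The argument is short, and its only delicate point --- which I view as the main step rather than a genuine obstacle --- is the final paragraph: one must guarantee both that a neighbour $w\neq v$ of $u$ exists (this is exactly where connectivity together with $\deg(v)=1$ is used) and that the resulting edge $uw$ is certified to be monitored by $v$. The certification is handled cleanly by Theorem~\ref{th-EM}; alternatively one can argue directly, noting that every $v$--$w$ path passes through $u$, so $d_G(v,w)=1+d_G(u,w)$ while $d_{G-uw}(v,w)=1+d_{G-uw}(u,w)$ with $d_{G-uw}(u,w)>1=d_G(u,w)$, which already shows $(v,w)\in P(\{v\},uw)$ and hence $uw\in EM(v)$.
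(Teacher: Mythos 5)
Your proof is correct and follows essentially the same route as the paper: both directions use the degree bound forced by the edges incident with $v$ (the paper invokes Corollary~\ref{cor-3-4}, you invoke its source, Theorem~\ref{Th-Ncover}) to get $\deg(v)=1$, and then Theorem~\ref{th-EM} to certify a second monitored edge $uw$ if any further vertex exists, which is exactly the paper's Claim that $d_G(u)=1$. Your alternative direct verification that $uw\in EM(v)$ is a nice bonus but not a departure from the paper's argument.
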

\begin{proof}
If $|EM(v)|=1$, then it follows from Corollary \ref{cor-3-4} that $d_G(v)\leq 1$. Since $G$ is connected, it follows that $d_G(v)\geq 1$ and hence $d_G(v)=1$. Let $u$ be the vertex such that $vu\in E(G)$.
\begin{claim}\label{claim-EM-1}
$d_G(u)=1$.
\end{claim}
\begin{proof}
Assume, to the contrary, that $d_G(u)\geq 2$. For any vertex $y\in N_G(u)-v$, we have
$y\in N_2(v)$, and hence $d_G(y,v)=2$, and so $N_1(v)=\{u\}$. From Theorem \ref{th-EM},
$uy\in EM(v)$, and hence $|EM(v)|\geq 2$, a contradiction.
\end{proof}
From Claim \ref{claim-EM-1}, we have $d_G(u)=1$. Since $G$ is connected, it follows that
$G=K_2$.

Conversely, let $G=K_2$. For any $v\in V(K_2)$,
we have $|EM(v)|=\{uv\}$,
and hence $|EM(v)|=1$.
\end{proof}

We now define a new graph $A_d \ (d\geq 3)$ such that the eccentricity of $v$ in $A_d$ is $d$ and all of the following conditions are true.
\begin{itemize}
\item[] For each $i \ (2\leq i \leq d)$, let $B_i$ be a graph such that $|B_i|\geq 2$ for $2\leq i \leq d-1$.

\item[] $V(A_d)=\{v,u_1,u_2\}\cup (\bigcup_{2\leq i \leq d}V(B_i))$, where $B_1$ is a graph with vertex set $\{u_1,u_2\}$.

\item[] $E(A_d)=\{vu_1,vu_2\}\cup (\bigcup_{2\leq i \leq d}E_{A_d}(B_i)\,\,)\cup (\bigcup_{2\leq i\leq d}E_{A_d}[v^{i},V(B_{i-1})]$ with $|E_{A_d}[v^{i},V(B_{i-1})]|\geq 2$, where $v^{i}\in V(B_i)$ for $2\leq i \leq d$.
\end{itemize}

Note that for each vertex in $B_i$, there are at least two edges from this vertex to $B_{i-1}$, where $2\leq i \leq d$.

For $d=2$, let $D$ be a graph of order $n-3$, $D_1(n)$ be a graph with $V(D_1(n))=\{v,u_1,u_2\}\cup V(D)$ and $E(D_1(n))=\{u_1w,u_2w\,|\,w\in V(D)\}\cup \{u_1v,u_2v,uv\}
\cup E(D)$, and $D_2(n)$ be a graph with $V(D_2(n))=\{v,u_1,u_2\}\cup V(D)$ and $E(D_2(n))=\{u_1w,u_2w\,|\,w\in V(D)\}\cup \{u_1v,u_2v\}\cup E(D)$.

\begin{theorem}
Let $G$ be connected graph with at least $3$ vertices. Then
there exists a vertex $v\in V(G)$ such that
$|EM(v)|=2$ if and only if $=D_1(n)$ or $G=D_2(n)$ or $G=A_d$ for $d\geq 3$.
\end{theorem}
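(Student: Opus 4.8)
The plan is to prove the characterization of connected graphs $G$ (with $n \geq 3$ vertices) that admit a vertex $v$ with $|EM(v)| = 2$. The strategy is to use Theorem~\ref{th-EM} as the main engine, since it gives a clean criterion: $uv \in EM(x)$ exactly when $u \in N_i(x)$ and $v$ is the \emph{unique} neighbor of $u$ in $N_{i-1}(x)$. I would first establish the easy direction. For each of the three candidate graph families ($D_1(n)$, $D_2(n)$, and $A_d$ for $d \geq 3$), I would verify that $|EM(v)| = 2$ by exhibiting exactly two edges satisfying the Theorem~\ref{th-EM} condition. For $D_1(n)$ and $D_2(n)$ the vertex $v$ has degree two with neighbors $u_1, u_2$, so $vu_1, vu_2 \in EM(v)$ by Theorem~\ref{Th-Ncover}; then I would show that every other edge fails the uniqueness condition of Theorem~\ref{th-EM} (equivalently, satisfies the hypotheses of Corollary~\ref{cor-EM}), because the ``at least two edges to the previous layer'' structure guarantees each vertex at distance $i$ has at least two neighbors at distance $i-1$. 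For $A_d$ the argument is the layered analogue: the layers are $N_i(v) = V(B_i)$, and the defining condition $|E_{A_d}[v^i, V(B_{i-1})]| \geq 2$ ensures no vertex beyond the first layer has a unique back-neighbor.

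For the forward (harder) direction, I would assume $|EM(v)| = 2$ and reconstruct the graph's structure. By Corollary~\ref{cor-3-4}, $|EM(v)| \geq \delta(G) \geq d_G(v)$ is not directly the bound I need, but more usefully $|EM(v)| \geq |N_G(v)|$, so $v$ has degree at most $2$; since $G$ is connected with $n \geq 3$, I would argue $d_G(v) = 2$ (degree $1$ would force, by the kind of argument in Claim~\ref{claim-EM-1}, a neighbor contributing extra monitored edges unless $G$ is too small). Write $N_1(v) = \{u_1, u_2\}$, so the two edges $vu_1, vu_2$ already account for $EM(v)$. The key consequence is then: \emph{every} edge not incident to $v$ must fail the Theorem~\ref{th-EM} criterion, which means for every $i \geq 2$ and every $u \in N_i(v)$, $u$ has at least two neighbors in $N_{i-1}(v)$. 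This is precisely the structural condition built into the definitions of $A_d$, $D_1(n)$, $D_2(n)$.

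The main obstacle will be organizing the case analysis by the eccentricity $d = \mathrm{ecc}(v)$ and matching the resulting layered structure to exactly the three named families, while making sure the boundary cases are handled honestly. The natural split is $d = 2$ versus $d \geq 3$. When $d \geq 3$, the ``every vertex in $N_i(v)$ has $\geq 2$ back-neighbors'' condition for all $2 \leq i \leq d$ reproduces the definition of $A_d$ directly (setting $B_i = G[N_i(v)]$), and I must also confirm the $|B_i| \geq 2$ requirement for intermediate layers, which follows because a single vertex in an intermediate layer could not supply two forward-neighbors while the graph stays connected down to layer $d$. When $d = 2$, there is only one nontrivial layer $N_2(v) = V(D)$, and every vertex there must have both $u_1$ and $u_2$ as neighbors; the only freedom left is whether the edge $u_1 u_2$ is present, and this dichotomy is exactly what distinguishes $D_1(n)$ (which includes an edge incident to $v$ via the extra vertex $u$, per its edge set $\{u_1 v, u_2 v, uv\}$) from $D_2(n)$. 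I would need to check carefully that adding the edge $u_1u_2$ (or analogous chords) does not create a new edge satisfying Theorem~\ref{th-EM}, using Corollary~\ref{cor-EM}$(1)$ for same-layer edges. The delicate point throughout is verifying that chords \emph{within} a layer never become monitored, which is where the even/odd cycle conditions of Corollary~\ref{cor-EM} must be invoked precisely rather than waved at.
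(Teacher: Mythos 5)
Your proposal is correct and follows essentially the same route as the paper's own proof: the degree bound $d_G(v)\leq 2$ from Corollary~\ref{cor-3-4}, the key observation that every edge not incident to $v$ must fail the uniqueness criterion of Theorem~\ref{th-EM} (the paper's Claim~\ref{CLM_w_i}, i.e., every vertex in $N_i(v)$ with $i\geq 2$ has at least two back-neighbors), the split by eccentricity $d=2$ versus $d\geq 3$ matching $D_1(n)/D_2(n)$ versus $A_d$, and Corollary~\ref{cor-EM} to rule out within-layer edges. One cosmetic remark: the edge ``$uv$'' in the paper's definition of $D_1(n)$ is a typo for $u_1u_2$ (there is no extra vertex $u$), so the dichotomy you identify --- presence or absence of the edge $u_1u_2$ --- is exactly what distinguishes $D_1(n)$ from $D_2(n)$ in the paper's argument as well.
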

\begin{proof}
Suppose that $G=D_1(n)$ or $G=D_2(n)$. Then there is a vertex $v\in V(G)$. Let $d$ be the eccentricity of $v$ in $G$.
For $w\in V(D)$, the subgraph induced by the vertices in $\{w,u_1,u_2,v\}$ is an even
cycle $C_4$, and hence $d_G(v,u_1)=1$ and $d_G(v, w)=2$. It follows from
Corollary \ref{cor-EM} that $wu_1\notin EM(v)$. Similarly, we have $wu_2\notin EM(v)$. If $u_1u_2\in E(G)$, then the subgraph induced by the vertices in
$\{u_1,u_2, v\}$ is a $3$-cycle, and hence $d_G(v,u_1)=1$
and $d_G(v, u_2)=1$. From Corollary \ref{cor-EM}, we have $u_1u_2\notin EM(v)$.
Similarly, we have
$d_G(v, w_i)=2$ and $d_G(v, w_j)=2$ for $w_iw_j \in E(D)$. From Corollary \ref{cor-EM},
we have $w_iw_j\notin EM(v)$, and hence
$|EM(v)|=\{u_1v, u_1v\}$, and so $|EM(v)|=2$.

Suppose that $G=A_d$, where $d \geq 3$. Note that $d$ is the eccentricity of $v$ in $G$.
Then
$$
E(A_d)=\{vu_1,vu_2\}\cup \left(\bigcup_{2\leq i \leq d}E_{A_d}(B_i)\,\,\right)\cup \left(\bigcup_{2\leq i\leq d}E_{A_d}[v^{i},V(B_{i-1})]\right)
$$
with $|E_{A_d}[v^{i},V(B_{i-1})]|\geq 2$, where $v^{i}\in V(B_i)$ for $2\leq i \leq d$.
Since $d_G(v, u_{is})=i$
and $d_G(v, u_{it})=i$
for any $u_{is}u_{it}\in E(B_i)$, it follows from
Corollary \ref{cor-EM} that $u_{is}u_{it} \notin EM(v)$.
Similarly,
let
$\mathcal{C}_i=E_{A_d}[v^{i},V(B_{i-1})]$ with $|E_{A_d}[v^{i},V(B_{i-1})]|\geq 2$, where $v^{i}\in V(B_i)$ for $2\leq i \leq d$.
If $ yx \in \mathcal{C}_i$, then
$x\in N_{i-1}(v)$, $y\in N_i(i)$
and there exists a vertex $x_1 \in N_{i-1}(v)$
such that $yx_1\in E(G)$.
From Corollary \ref{cor-EM},
we have $yx \notin EM(v)$, and so
$|EM(v)|=\{u_1v, u_1v\}$,
and hence $|EM(v)|=2$.

Conversely,
if $|EM(v)|=2$, then it follows from
Corollary \ref{cor-3-4} that $d_G(v) \leq 2$.
If $d_G(v) = 1$,
without loss of generality,
let $uv\in E(G)$ and $y\in N_G(u)$,
then $uy\in EM(v)$,
and hence $|N_G(u)-v|=1$, and so $G\cong P_3$,
and hence $G\cong B_2(3)$.
Suppose that $d_G(v) = 2$.
Without loss of generality, let
$N_G(v)=\{u_1, u_2\}$. Suppose that $n=3$. If $u_1u_2\notin E(G)$,
then $G=D_2(3)$.
If $u_1u_2 \in E(G)$,
then the subgraph induced by the vertices in
$\{v,u_1,u_2\}$ is a $3$-cycle,
and hence
$d_G(v,u_1)=d_G(v,u_2)$. From
Corollary \ref{cor-EM}, we have
$u_1u_2 \notin EM(v)$, and hence
$G=D_1(3)$.

Suppose that $n\geq 4$. Since
$|EM(v)|=2$, it follows that $\{vu_1, vu_2\} \subseteq EM(v)$, and hence $e\notin EM(v)$
for any $e\in E(G)-\{vu_1, vu_2\}$.
\begin{claim}\label{CLM_w_i}
For any $i\geq 2$,
$y\in N_i(v)$
and $x \in N_{i-1}(v)$,
if $yx \in E(G)$,
then there exists a vertex $x_1 \in N_{i-1}(v)$
with $yx_1 \in E(G)$.
\end{claim}
\begin{proof}
Assume, to the contrary,
that there exists no $x_1\in N_{i-1}$
such that $yx_1 \notin E(G)$.
Then $d_G(v,y)=i$ but
$d_{G-yx}(v,y) \geq i+1$,
and so $yx\in EM(v)$, and hence $|EM(v)|\geq 3$,
a contradiction.
\end{proof}

If $d=2$, then for any $w \in V(G)-\{v,u_1,u_2\}$, it follows from Claim \ref{CLM_w_i} that
if $w\in N_2(v)$ and
$wu_1\in E(G)$,
then $wu_2 \in E(G)$.
For any $w_{s},w_{t}\in N_2(v)$, we assume that $w_{s}w_{t}\in E(G)$. Since $d_G(v,w_{s})=2$ and
$d_G(v,w_{t})=2$, it follows from
Corollary \ref{cor-EM} that
$w_{s}w_{t}\notin EM(v)$, and hence
$G=B_1(n)$ or $G=B_2(n)$.

If $d \geq 3$, then
$$
V(G^{d*})=\{v, u_1,u_2\}\cup \{u_{ij}\,|\,2\leq i\leq d,1\leq j\leq t_d \}
=\{v, u_1, u_2\}\cup
\{u_{21},\ldots,u_{2t_2}\}
\cup \cdots \cup
\{u_{d1},\ldots,u_{dt_d}\},
$$
where
$v\in N_0(v)$,
$u_1,u_2\in N_1(v)$,
$u_{21},\ldots u_{2t_2}\in N_2(v)$,
$\ldots$
$u_{d1},\ldots u_{dt_d}\in N_d(v)$
and $\sum_{i=2}^{i=d}{t_s}=n-3$.

By Claim \ref{CLM_w_i},
if $y\in N_i(v)$ and
$yx\in E(G)$,
then there exists a vertex $x_1\in N_{i-1}(v)$
and $x_1\neq x$ such that $yx_1 \in E(G)$, and hence
$yx \in E_{A_d}[v^{i},V(B_{i-1})]$ with $|E_{A_d}[v^{i},V(B_{i-1})]|\geq 2$, where $v^{i}\in V(B_i)$ for $2\leq i \leq d$.

For any $u_{is},u_{it}\in B_i(v)$
and $u_{is}u_{it}\in E(B_i)$,
since $d_G(v,u_{is})=i$ and
$d_G(v,u_{it})=i$, it follows from
Corollary \ref{cor-EM} that
$u_{is}u_{it}\notin EM(v)$,
and hence $B_i$ is a
graph with order at least $2$, and so
$$
E(A_d)=\{vu_1,vu_2\}\cup \left(\bigcup_{2\leq i \leq d}E_{A_d}(B_i)\,\,\right)\cup \left(\bigcup_{2\leq i\leq d}E_{A_d}[v^{i},V(B_{i-1})]\right)
$$
with $|E_{A_d}[v^{i},V(B_{i-1})]|\geq 2$, where $v^{i}\in V(B_i)$ for $2\leq i \leq d$.
Therefore, $G=A_d$.
\end{proof}

\begin{theorem}
Let $G$ be a connected graph of order $n$. Then there exists a vertex $v\in V(G)$ such that
$|EM(v)|=n-1$ if and only if for any $w\in V(G)$,
there are no $w_1,w_2\in N_G(w)$ such that $d_G(w_1,v)=d_G(w_2,v)=d_G(w,v)-1$.
\end{theorem}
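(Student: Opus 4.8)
The plan is to fix a vertex $v$ and prove the per-vertex equivalence
$$
|EM(v)|=n-1 \iff \text{no } w\in V(G) \text{ has two neighbors } w_1,w_2 \text{ with } d_G(w_1,v)=d_G(w_2,v)=d_G(w,v)-1,
$$
after which the existential statement in the theorem follows by applying the equivalence to the vertex realizing either side. The entire argument rests on the layer characterization of $EM(v)$ provided by Theorem~\ref{th-EM}, so I would begin by recording its consequence for the shape of $EM(v)$: every edge of $EM(v)$ joins some vertex $u\in N_i(v)$ to its \emph{unique} neighbor in $N_{i-1}(v)$. In particular no edge of $EM(v)$ lies inside a single layer $N_i(v)$, so each edge runs between two consecutive layers and has a well-defined \emph{higher endpoint}, namely the vertex $u$ in the farther layer $N_i(v)$.

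The key combinatorial step I would carry out is to show that the map sending each edge of $EM(v)$ to its higher endpoint is an injection into $V(G)\setminus\{v\}$. Indeed, if $uw_1,uw_2\in EM(v)$ both have higher endpoint $u\in N_i(v)$, then by Theorem~\ref{th-EM} both $w_1$ and $w_2$ are ``the only neighbor of $u$ in $N_{i-1}(v)$'', which forces $w_1=w_2$. Since the higher endpoint always lies at distance at least $1$ from $v$, this injection lands in the $(n-1)$-element set $V(G)\setminus\{v\}$ and yields $|EM(v)|\le n-1$ directly (an alternative to invoking the forest bound of Theorem~\ref{Th-forest}).

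Next I would pin down the equality case. Because the higher-endpoint map is injective into an $(n-1)$-element set, $|EM(v)|=n-1$ holds if and only if this map is surjective, i.e.\ if and only if every vertex $u\neq v$ is the higher endpoint of some edge of $EM(v)$. Applying Theorem~\ref{th-EM} once more, a vertex $u\in N_i(v)$ is a higher endpoint exactly when $u$ has precisely one neighbor in $N_{i-1}(v)$. Hence $|EM(v)|=n-1$ if and only if every vertex $u\neq v$ has exactly one neighbor in $N_{d_G(u,v)-1}(v)$.

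Finally I would translate this into the stated neighbor condition. Every vertex $u\neq v$ automatically has \emph{at least} one neighbor in the previous layer (the penultimate vertex on a shortest $v$–$u$ path), so ``exactly one neighbor in $N_{d_G(u,v)-1}(v)$'' is equivalent to ``at most one'', i.e.\ to the nonexistence of two distinct $w_1,w_2\in N_G(u)$ with $d_G(w_1,v)=d_G(w_2,v)=d_G(u,v)-1$; for $u=v$ the previous layer $N_{-1}(v)$ is empty and the condition holds vacuously. Renaming $u$ as $w$ gives the statement verbatim. I expect the only delicate points to be the bookkeeping for this higher-endpoint map — in particular the observation that $EM(v)$ has no intra-layer edges, so that ``higher endpoint'' is genuinely well defined and the map is both injective and, under equality, surjective — and the care needed to pass between the at-least-one guarantee from shortest paths and the at-most-one condition forbidden in the hypothesis.
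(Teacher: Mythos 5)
Your proof is correct, but it takes a genuinely different route from the paper's. The paper proves the forward direction by invoking Theorem~\ref{Th-forest}: since $EM(v)$ induces a forest, $|EM(v)|=n-1$ forces $EM(v)$ to be a spanning tree, and connectivity of that tree is then used to argue (in its Claim) that every vertex $w$ has an edge of $EM(v)$ to the previous layer, whose endpoint is unique by Theorem~\ref{th-EM}; the converse builds the $n-1$ edges directly from the unique down-neighbors. You instead extract everything from Theorem~\ref{th-EM} alone: edges of $EM(v)$ never lie inside a layer, the higher-endpoint map is injective into $V(G)\setminus\{v\}$ (two edges sharing a higher endpoint $u$ would make both lower endpoints ``the'' unique neighbor of $u$ in the previous layer), so $|EM(v)|\le n-1$ without the forest theorem, and equality becomes surjectivity of this map, which translates directly into ``exactly one down-neighbor'' for every $u\neq v$ and then, via the at-least-one guarantee from shortest paths, into the stated forbidden configuration. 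Your version buys two things: it is self-contained (no appeal to Theorem~\ref{Th-forest}), and it actually closes a gap in the paper's argument --- the paper's Claim justifies the existence of a down-edge at $w$ only by the terse assertion that otherwise ``$EM(v)$ is disconnected,'' which as written is incomplete, since a priori the spanning tree could reach $w$ through a path that first ascends and then descends; ruling that out requires exactly the uniqueness/injectivity of the higher-endpoint assignment that you make explicit, and your surjectivity argument replaces the connectivity appeal with a clean counting step. Your handling of the vacuous case $u=v$ and of the per-vertex reading of the (somewhat loosely quantified) theorem statement is also appropriate.
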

\begin{proof}
Suppose that $|EM(v)|=n-1$. Since $G$
is a connected graph of order $n$, it follows from
Theorem \ref{Th-forest} that
$EM(v)$ forms a spanning tree of $G$.
\begin{claim}\label{Clm:chEMn-1}
For any vertex $w\in V(G)$, there exists a vertex $w_i\in N_{d_G(v,w)-1}(v)$ with
$w_iw\in EM(v)$.
\end{claim}
\begin{proof}
Assume, to the contrary, that there is no $w_i \in N_{d_G(v,w)-1}(v)$
with $w_iw\in EM(v)$.
it follows that $EM(v)$ is disconnected,
which contradicts to the fact that the subgraph induced by the edges in $EM(v)$ is connected.
\end{proof}

By Claim \ref{Clm:chEMn-1}, for any vertex $w\in V(G)$, there exists a vertex $w_i\in N_{d_G(v,w)-1}(v)$ with $w_iw\in EM(v)$. From Theorem \ref{th-EM},
$w_i$ is the unique neighbor of $w$ in $N_{d_G(v,w)-1}(v)$,
and hence for any $w\in V(G)$,
there are no two vertices $w_1,w_2\in N_G(w)$ such that $d_G(w_1,v)=d_G(w_2,v)=d_G(w,v)-1$.

Conversely,
we suppose that for any $w\in V(G)$,
there are no $w_1,w_2\in N_G(w)$ such that $d_G(w_1,v)=d_G(w_2,v)=d_G(w,v)-1$.
Since $G$ is connected,
it follows that there is only one vertex
$w_i\in N_{d_G(v,w)-1}(v)$.
From Theorem
\ref{th-EM}, we have $w_iw\in EM(v)$,
and hence $ |EM(v)|=n-1$.
\end{proof}

The existence of $dem(G)$ is obvious,
because $V(G)$ is always a distance-edge-monitoring set.
Thus, the definition of $dem(G)$ is  meaningful.
The \emph{arboricity} $arb(G)$ of a graph $G$ is the smallest number of sets into which $E(G)$ can be partitioned and such that each
set induces a forest. The \emph{clique number} $\omega(G)$ of $G$ is the order of a largest clique in $G$.

\begin{theorem}{\upshape\cite{FKKMR21}}\label{Theorem:Lowerbond}
For any graph $G$ of order $n$ and size $m$, we have $dem(G)\geq arb(G)$, and thus $dem(G) \geq \frac{m}{n-1}$ and $dem(G) \geq \frac{\omega(G)}{2}$.
\end{theorem}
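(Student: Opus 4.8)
The plan is to obtain all three inequalities from the single structural fact recorded in Theorem \ref{Th-forest}, that $EM(x)$ induces a forest for every vertex $x$; the first inequality $dem(G)\geq arb(G)$ carries the real content, and the other two follow as quantitative consequences. First I would fix a minimum distance-edge-monitoring set $M$, so that $|M|=dem(G)$ and $\bigcup_{x\in M}EM(x)=E(G)$. By Theorem \ref{Th-forest}, each of the $|M|$ sets $EM(x)$ induces a forest, so $E(G)$ is covered by $dem(G)$ forests. A cover of $E(G)$ by forests can always be refined to a partition into the same number of forests---assign each edge to exactly one of the forests containing it, since any subgraph of a forest is again a forest---so $E(G)$ partitions into at most $dem(G)$ forests, and the definition of arboricity yields $arb(G)\leq dem(G)$.

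For $dem(G)\geq m/(n-1)$ I would reuse this forest cover together with the elementary bound that a forest on $n$ vertices has at most $n-1$ edges: then $m=|E(G)|\leq\sum_{x\in M}|EM(x)|\leq dem(G)\,(n-1)$, which rearranges to the claim. For $dem(G)\geq\omega(G)/2$ I would invoke monotonicity of arboricity under subgraphs. If $K$ is a largest clique, with $|K|=\omega(G)$, then $arb(G)\geq arb(K_{\omega})$ because any forest partition of $G$ restricts to one of $G[K]$. Since $K_{\omega}$ has $\binom{\omega}{2}$ edges and every forest on $\omega$ vertices has at most $\omega-1$ of them, $arb(K_{\omega})\geq\binom{\omega}{2}/(\omega-1)=\omega/2$, and combining with the first inequality gives $dem(G)\geq arb(G)\geq\omega(G)/2$.

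The main obstacle is conceptual and lies entirely in the first step: one must recognize that a distance-edge-monitoring set is, via Theorem \ref{Th-forest}, nothing but an edge cover of $G$ by forests, and that arboricity does not distinguish between covering and partitioning. Once this observation is made, the two remaining inequalities are routine edge-counting estimates requiring no case analysis, and the whole argument reduces to a few short applications of the principle that a forest on $n$ vertices has at most $n-1$ edges.
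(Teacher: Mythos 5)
Your proof is correct. Note, however, that this paper does not prove the statement at all: it is quoted with a citation from Foucaud \emph{et al.}~\cite{FKKMR21}, so the only meaningful comparison is with the proof in that source, and your argument essentially reconstructs it. The key step is identical: a minimum distance-edge-monitoring set $M$ covers $E(G)$ by $|M|$ forests via Theorem~\ref{Th-forest}, and a forest cover refines to a forest partition, giving $arb(G)\leq dem(G)$. Where you diverge slightly is in the two consequences: the original derives them from the Nash-Williams formula $arb(G)=\max_{H\subseteq G}\left\lceil \frac{m_H}{n_H-1}\right\rceil$, taking $H=G$ and $H=K_{\omega(G)}$, whereas you argue by direct edge counting ($m\leq \sum_{x\in M}|EM(x)|\leq dem(G)(n-1)$) and by monotonicity of arboricity under subgraphs. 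Your route is more elementary, since it needs nothing beyond the fact that a forest on $n$ vertices has at most $n-1$ edges; Nash-Williams buys the marginally stronger integer bounds $dem(G)\geq\left\lceil \frac{m}{n-1}\right\rceil$ and $dem(G)\geq\left\lceil \frac{\omega(G)}{2}\right\rceil$, though since $dem(G)$ is an integer your bounds imply these anyway. One pedantic caveat: your computation $arb(K_{\omega})\geq\binom{\omega}{2}/(\omega-1)=\omega/2$ presumes $\omega\geq 2$, which is harmless here since the theorem is vacuous-or-degenerate for edgeless graphs.
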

We next see that \emph{distance-edge-monitoring sets} are relaxations of vertex covers. A  vertex set $M$  is called a \emph{vertex cover}
of $G$ if every edge of $G$ has one of its endpoints in $M$. The minimum cardinality of a vertex cover $M$ in $G$ is the \emph{vertex covering number} of $G$, denoted by $\beta(G)$.

\begin{theorem}{\upshape\cite{FKKMR21}}
\label{Theorem:Upperbond}
In any graph $G$ of order $n$, any vertex cover of $G$ is a distance-edge-monitoring set, and thus $dem(G) \leq \beta(G)$.
\end{theorem}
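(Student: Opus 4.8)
The plan is to show directly that every vertex cover is a distance-edge-monitoring set; the bound $dem(G) \leq \beta(G)$ then follows by taking $M$ to be a minimum vertex cover. Let $M$ be any vertex cover of $G$, and fix an arbitrary edge $e = uv \in E(G)$. The defining property of a vertex cover is that at least one endpoint of $e$ lies in $M$; without loss of generality, assume $u \in M$. It then suffices to exhibit a single witness pair certifying that $e$ is monitored by $u$.

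The key (and essentially only) step is to verify that $e \in EM(u)$. I would do this by taking $y = v$ and comparing distances. Since $uv \in E(G)$ we have $d_G(u,v) = 1$, whereas in $G - e$ the direct edge is no longer available, so any surviving $u$–$v$ path has length at least $2$ (and if $e$ is a bridge, $v$ becomes unreachable, which we regard as distance strictly greater than $1$). In either case $d_G(u,v) \neq d_{G-e}(u,v)$, so $(u,v) \in P(\{u\}, e)$, and hence $u$ monitors $e$. Alternatively, this is exactly the conclusion of Theorem~\ref{Th-Ncover}, since $v \in N_G(u)$ immediately gives $uv \in EM(u)$; I would cite that theorem to keep the argument short.

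It remains to lift this from the single vertex $u$ to the whole set $M$. Because $u \in M$, monotonicity (Proposition~\ref{Pro-P-1}) yields $P(\{u\}, e) \subseteq P(M, e)$, so $P(M, e) \neq \emptyset$; equivalently, $e \in EM(u) \subseteq \bigcup_{x \in M} EM(x)$. As $e$ was an arbitrary edge, every edge of $G$ is monitored by some vertex of $M$, which is precisely the statement that $M$ is a distance-edge-monitoring set. Applying this to a vertex cover of minimum size gives $dem(G) \leq |M| = \beta(G)$.

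There is no real obstacle here: the entire content is the observation that a vertex cover touches every edge, combined with the elementary fact that a vertex monitors each of its incident edges (Theorem~\ref{Th-Ncover}). The only point requiring a word of care is the bridge case, where $d_{G-e}(u,v)$ is infinite rather than merely larger; I would simply note that in every case the inequality $d_G(u,v) \neq d_{G-e}(u,v)$ holds, so the witness pair $(u,v)$ is valid regardless of whether $e$ is a bridge.
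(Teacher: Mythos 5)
Your proof is correct, and it is exactly the intended argument: the paper states this theorem as a citation to Foucaud et al.~\cite{FKKMR21} without reproving it, and the standard proof there is precisely your combination of the vertex-cover property with the fact that a vertex monitors each incident edge (Theorem~\ref{Th-Ncover}). Your handling of the bridge case and the reduction $(u,v)\in P(\{u\},e)\subseteq P(M,e)$ are both sound (indeed, $(u,v)\in P(M,e)$ follows directly from the definition since $u\in M$, so the appeal to monotonicity is not even needed).
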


An \emph{independent set} is a set of vertices of $G$ such that no two vertices are adjacent.
The largest cardinality of an \emph{independent set} is the \emph{independence number} of $G$, denoted by $\alpha(G)$.

The following well-known theorem 
was introduced by Galla\'{i} in 1959.

\begin{theorem}[Galla\'{i} Theorem]{\upshape\cite{Char15}}
\label{The-Gal}
In any graph $G$ of order $n$, we have
$$
\beta(G)+\alpha(G)=n.
$$
\end{theorem}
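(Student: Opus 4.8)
The plan is to exploit the exact complementary relationship between independent sets and vertex covers, which is the conceptual heart of this identity. The single key observation I would establish first is the following: a vertex set $S \subseteq V(G)$ is an independent set if and only if its complement $V(G) \setminus S$ is a vertex cover of $G$. To see the forward direction, if $S$ is independent then no edge of $G$ has both endpoints in $S$, so every edge has at least one endpoint outside $S$, i.e.\ in $V(G) \setminus S$; hence $V(G) \setminus S$ is a vertex cover. For the converse, if $V(G) \setminus S$ is a vertex cover then every edge has an endpoint in $V(G) \setminus S$, so no edge lies entirely within $S$, meaning $S$ is independent. This equivalence is the only nontrivial ingredient, and it follows directly from the definitions of $\alpha(G)$ and $\beta(G)$ given just above the statement.

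With the complementarity in hand, I would derive the two opposing inequalities. First, let $S$ be a maximum independent set, so that $|S| = \alpha(G)$. By the observation, $V(G) \setminus S$ is a vertex cover of size $n - \alpha(G)$, whence $\beta(G) \leq n - \alpha(G)$. Second, let $C$ be a minimum vertex cover, so $|C| = \beta(G)$. Again by the observation, $V(G) \setminus C$ is an independent set of size $n - \beta(G)$, whence $\alpha(G) \geq n - \beta(G)$, which rearranges to $\beta(G) \geq n - \alpha(G)$.

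Combining the two inequalities $\beta(G) \leq n - \alpha(G)$ and $\beta(G) \geq n - \alpha(G)$ forces equality $\beta(G) = n - \alpha(G)$, and therefore $\beta(G) + \alpha(G) = n$, as claimed. I do not anticipate any genuine obstacle here: the proof is entirely elementary, and the only step requiring any care is stating and justifying the independent-set/vertex-cover complementation principle cleanly, since every subsequent line is just taking complements and reading off cardinalities. If anything, the subtlety worth flagging is that one must apply the equivalence once to a maximum independent set and once to a minimum vertex cover, so that both extremal quantities are pinned down from opposite directions rather than from a single choice of set.
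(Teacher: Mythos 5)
Your proof is correct and complete: the complementation principle (that $S$ is independent if and only if $V(G)\setminus S$ is a vertex cover) together with the two extremal applications is exactly the standard argument for Gallai's identity. Note that the paper itself gives no proof of this theorem, citing it from Chartrand, Lesniak and Zhang \cite{Char15}, so there is nothing to compare against; your argument is the classical textbook proof and fills the gap correctly.
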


\begin{corollary}
For a graph $G$ with order $n$, we have
$$
dem(G)\leq  n- \alpha(G).
$$
Moreover, the bound is sharp.
\end{corollary}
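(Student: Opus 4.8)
The plan is to derive this corollary immediately by chaining two results already established in the excerpt. First I would invoke Theorem \ref{Theorem:Upperbond}, which guarantees that every vertex cover of $G$ is a distance-edge-monitoring set and hence $dem(G) \leq \beta(G)$. Then I would apply the Gall\'{a}i Theorem (Theorem \ref{The-Gal}), which states $\beta(G) + \alpha(G) = n$, so that $\beta(G) = n - \alpha(G)$. Substituting this into the first inequality yields $dem(G) \leq n - \alpha(G)$, which is exactly the claimed bound. No intermediate lemmas or case analysis are needed; the entire upper bound is a one-line substitution.

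To establish sharpness, I would exhibit a single family of graphs for which equality holds. The natural candidate is the complete graph $K_n$. On one hand, Theorem \ref{th-dem-n} gives $dem(K_n) = n-1$. On the other hand, any two vertices of $K_n$ are adjacent, so the largest independent set is a single vertex, giving $\alpha(K_n) = 1$ and therefore $n - \alpha(K_n) = n-1$. Combining these, $dem(K_n) = n-1 = n - \alpha(K_n)$, so the bound is attained.

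I do not anticipate any genuine obstacle here, since both ingredients are quoted results. The only point that deserves a moment's care is confirming that the extremal example is correctly computed: one must cite Theorem \ref{th-dem-n} for the exact value $dem(K_n)=n-1$ rather than merely the general range $1 \le dem(G)\le n-1$, so that equality in the corollary is actually witnessed and not just bounded. With that citation in place, the sharpness claim is complete.
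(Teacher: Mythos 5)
Your proposal is correct and follows essentially the same route as the paper: chaining Theorem \ref{Theorem:Upperbond} with the Gall\'{a}i identity $\beta(G)+\alpha(G)=n$, then witnessing sharpness with the complete graph (the paper additionally mentions $K_{m,n}$, but $K_n$ alone suffices). Your explicit citation of Theorem \ref{th-dem-n} for $dem(K_n)=n-1$ actually makes the sharpness verification slightly more complete than the paper's one-line remark.
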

\begin{proof}
From Theorem \ref{The-Gal}, we have
$\beta(G)=n-\alpha(G)$.
From Theorem \ref{Theorem:Upperbond},
we have $dem(G) \leq \beta(G)$,
and hence $dem(G) \leq n-\alpha(G)$,
as desired.
For a complete graph $G=K_n$ or
complete bipartite graph $G=K_{m,n}$,
we have $dem(G)=n-\alpha(G)$.
\end{proof}

\begin{theorem}{\upshape\cite{FKKMR21}}\label{Theorem:JOINT_Operation}
For any graph $G$, we have
$\beta(G) \leq dem(G \vee  K_1) \leq \beta(G)+1$. Moreover, if $G$ has radius at least $4$,
then $\beta(G) = dem(G \vee  K_1)$.
\end{theorem}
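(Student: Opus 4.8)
The plan is to pin down the sets $EM(x)$ for every vertex $x$ of $H := G \vee K_1$ and then read off both bounds from them. Write $w$ for the apex vertex coming from $K_1$, so that $w$ is adjacent to every vertex of $G$ and $H$ has diameter at most $2$. First I would apply the layer characterization of Theorem \ref{th-EM} relative to each possible probe. Relative to $w$ we have $N_0(w)=\{w\}$ and $N_1(w)=V(G)$, so a spoke $wv$ satisfies the criterion (its endpoint $v$ has the unique lower-layer neighbour $w$) while an edge $uv$ of $G$ has both endpoints in $N_1(w)$ and fails it; hence $EM(w)$ is exactly the set of spokes $\{wv : v\in V(G)\}$. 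Relative to a vertex $x\in V(G)$ the layers are $N_0(x)=\{x\}$, $N_1(x)=N_G(x)\cup\{w\}$ and $N_2(x)=V(G)\setminus N_G[x]$; running Theorem \ref{th-EM} through the cases shows that $EM(x)$ consists of the $G$-edges incident to $x$, the spoke $xw$, and the spokes $wv$ for those $v$ with $d_G(x,v)\ge 3$ (these are precisely the $v$ at layer $2$ whose only layer-$1$ neighbour is $w$, i.e.\ having no common $G$-neighbour with $x$). The one structural fact I really need is the immediate corollary: an edge of $G$ is monitored by a probe only if that probe is one of its two endpoints.

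For the bounds, since no edge of $G$ is monitored from $w$ or from a non-incident vertex, any monitoring set $M$ of $H$ must have $M\cap V(G)$ be a vertex cover of $G$; hence $|M|\ge\beta(G)$ and $dem(H)\ge\beta(G)$. For the upper bound I would take a minimum vertex cover $C$ of $G$ and set $M=C\cup\{w\}$: the set $C$ monitors every edge of $G$ (each has an endpoint in $C$) and $w$ monitors every spoke, so $M$ is a distance-edge-monitoring set and $dem(H)\le\beta(G)+1$.

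For the ``moreover'' statement, under $\mathrm{rad}(G)\ge 4$ I would show that the single cover $M=C$ already monitors all of $E(H)$, which forces equality. The edges of $G$ are handled by $C$ as before, and a spoke $wv$ with $v\in C$ is monitored by $v$ itself. The remaining task is to monitor $wv$ for $v\notin C$, for which I need some $x\in C$ with $d_G(x,v)\ge 3$. Since every eccentricity is at least the radius, the eccentricity of $v$ is at least $4$, so I can choose $u$ with $d_G(v,u)\ge 4$ and a neighbour $u'$ of $u$ on a shortest $v$--$u$ path, giving $d_G(v,u')\ge 3$. Because $C$ covers the edge $uu'$, one of $u,u'$ lies in $C$, and that vertex is at distance $\ge 3$ from $v$; it therefore monitors $wv$. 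Hence $M=C$ monitors all of $E(H)$, giving $dem(H)\le\beta(G)$ and thus $dem(H)=\beta(G)$.

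The delicate part is the exact determination of which spokes a probe $x\in V(G)$ monitors---the clean equivalence ``$wv\in EM(x)$ iff $v=x$ or $d_G(x,v)\ge 3$''---since this is what links the abstract monitoring condition to the metric of $G$; everything else is bookkeeping with Theorem \ref{th-EM} together with the fact that internal edges are monitored only by their endpoints. The second genuinely nontrivial step is the radius-to-cover argument, where the vertex-cover property of $C$ is used to \emph{pull back} a far vertex to a cover vertex that is still at distance $\ge 3$ from $v$; this is exactly where the threshold $4$ (rather than $3$) is needed, so that stepping one vertex back along the path still leaves distance at least $3$.
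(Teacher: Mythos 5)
Your proposal is correct, but note that this paper does not actually prove the statement: Theorem \ref{Theorem:JOINT_Operation} is imported from \cite{FKKMR21} without proof (the paper only adapts it in the subsequent corollary on $G \vee mK_1$, whose sketch is considerably less careful than your argument). So there is no internal proof to compare against; measured against the original argument in \cite{FKKMR21}, yours follows the same strategy and all the key steps check out. In particular, your exact computation of the monitoring sets in $H = G \vee K_1$ via Theorem \ref{th-EM} is right: $EM(w)$ is precisely the set of spokes (any $G$-edge has both ends in $N_1(w)$), and for $x \in V(G)$ the apex $w$ sits in $N_1(x)$ adjacent to every vertex of $N_2(x)$, which is exactly what kills monitoring of non-incident $G$-edges and yields the clean criterion $wv \in EM(x) \Leftrightarrow v = x$ or $d_G(x,v) \geq 3$. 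That criterion gives the lower bound (the trace $M \cap V(G)$ of any monitoring set is a vertex cover), the upper bound ($C \cup \{w\}$), and the radius-$4$ step, where your pullback along a shortest path --- choosing $u$ with $d_G(v,u) \geq 4$, stepping to $u'$ with $d_G(v,u') \geq 3$, and invoking the cover property on the edge $uu'$ --- correctly explains why the threshold is $4$ and not $3$. The only cosmetic caveat is the degenerate case where $G$ is disconnected or edgeless (where $\beta(G) = 0$ but $dem(G \vee K_1) = 1$); this is excluded by the standard convention that a graph of radius at least $4$ is connected, which your shortest-path argument implicitly and legitimately uses.
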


Similarly to the proof of Theorem~\ref{Theorem:JOINT_Operation}, we can obtain the following result.
\begin{corollary}
For any graph $G$ and integer $m$, we have
$$
\beta(G) \leq dem(G \vee  mK_1) \leq \beta(G)+m.
$$
Moreover, the bounds are sharp.
\end{corollary}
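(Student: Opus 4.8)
The statement generalizes Theorem~\ref{Theorem:JOINT_Operation} from joining a single vertex $K_1$ to joining $m$ isolated vertices $mK_1$. The plan is to reduce the problem to the $m=1$ case as much as possible and then argue the two inequalities separately, exactly as one expects from the phrase ``similarly to the proof of Theorem~\ref{Theorem:JOINT_Operation}.'' For the upper bound $dem(G\vee mK_1)\le \beta(G)+m$, I would exhibit an explicit distance-edge-monitoring set: take a minimum vertex cover $S$ of $G$ (so $|S|=\beta(G)$) and adjoin all $m$ of the new apex vertices, giving a candidate set of size $\beta(G)+m$. One then verifies that every edge of $G\vee mK_1$ is monitored. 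The edges fall into three types: edges inside $G$, which are monitored because $S$ is a vertex cover of $G$ (here I would invoke Theorem~\ref{Theorem:Upperbond}, that a vertex cover is always a distance-edge-monitoring set, applied within $G$); and edges joining an apex vertex to a vertex of $G$, which are monitored by the incident apex vertex itself, since every apex vertex is placed in the monitoring set and an edge incident to a monitor lies in $EM$ of that monitor by Theorem~\ref{Th-Ncover}. Since $mK_1$ is edgeless, there are no apex–apex edges to worry about, so this accounts for all of $E(G\vee mK_1)$.

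For the lower bound $dem(G\vee mK_1)\ge \beta(G)$, the cleanest route is to show that the restriction of any distance-edge-monitoring set to its action on the edges of $G$ forces a vertex cover of $G$, mirroring the lower-bound half of Theorem~\ref{Theorem:JOINT_Operation}. Because the diameter of $G\vee mK_1$ is small (every two vertices of $G$ are at distance at most $2$ through any apex vertex), an edge $xy$ inside $G$ can only be monitored by a vertex that sees $xy$ as the unique shortest-path continuation, which—via Theorem~\ref{th-EM}—pins down the monitor to lie essentially among the endpoints/neighbors in $G$; aggregating this over all edges of $G$ yields a vertex cover, hence at least $\beta(G)$ monitors are needed. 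I would lift the structural argument from the $m=1$ proof almost verbatim, noting that adding further isolated apex vertices does not create new short detours that would let a distant vertex monitor an internal edge.

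The main obstacle is the lower bound, specifically controlling the effect of having $m>1$ apex vertices simultaneously. With several apexes, distances between vertices of $G$ can be realized through many parallel length-$2$ paths, which tends to \emph{destroy} monitorability of internal edges and could in principle change which vertices are forced into the set; one must check that this only ever helps the vertex-cover lower bound (by making internal edges harder to monitor, thus forcing monitors onto the cover) rather than introducing some cheaper monitoring scheme that undercuts $\beta(G)$. The upper bound is routine. For sharpness, I would record that the lower bound is attained by graphs of large radius (as in the ``radius at least $4$'' clause of Theorem~\ref{Theorem:JOINT_Operation}, where the single-apex bound is already tight and adding isolated vertices preserves tightness), while the upper bound $\beta(G)+m$ is attained by a complete graph $G=K_t$, for which $G\vee mK_1=K_{t+m}$ minus a clique on the $m$ apexes and one computes $dem$ directly from Theorem~\ref{Theorem:Lowerbond} and Theorem~\ref{Theorem:Upperbond}.
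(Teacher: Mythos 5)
Your proofs of the two inequalities follow essentially the paper's route and are sound in substance. For the lower bound you argue, as the paper does, that any distance-edge-monitoring set $M$ of $G\vee mK_1$ must contain a vertex cover of $G$: for an edge $uv$ of $G$ with $u,v\notin M$, every other vertex $x$ has $d(x,u),d(x,v)\in\{1,2\}$, and if, say, $d(x,v)=1$ and $d(x,u)=2$, the apex vertices give $u$ a second neighbor at distance $1$ from $x$, so $uv\notin EM(x)$ by Theorem~\ref{th-EM}; your remark that additional apexes only destroy monitorability of internal edges, and hence only help this bound, is exactly the right observation. One repair is needed in your upper bound: you cannot invoke Theorem~\ref{Theorem:Upperbond} ``within $G$,'' because monitoring does not transfer from $G$ to the join (a single vertex of a tree $T$ monitors all of $E(T)$, but not in $T\vee K_1$, where new length-$2$ detours appear). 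The fix is immediate and is what the paper does: $S\cup V(mK_1)$ is a vertex cover of the whole graph $G\vee mK_1$, so Theorem~\ref{Theorem:Upperbond} applied to the join (equivalently, Theorem~\ref{Th-Ncover} applied edge by edge, since every edge of the join is incident with the candidate set) yields $dem(G\vee mK_1)\leq \beta(G\vee mK_1)\leq \beta(G)+m$.

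The genuine gap is in your sharpness claims, both of which fail for $m\geq 2$. If $m\geq 2$, an apex edge $w_iy$ can be monitored only by $w_i$ or $y$: any other vertex $x$ keeps all of its distances after deleting $w_iy$, because every distance-$2$ pair is still realized through a second apex $w_j$. Consequently a distance-edge-monitoring set must either contain all of $V(G)$, or contain all $m$ apexes together with a vertex cover of $G$; this gives $dem(G\vee mK_1)=\min\{|V(G)|,\beta(G)+m\}>\beta(G)$. So the lower bound is never attained for $m\geq 2$, and your claim that ``adding isolated vertices preserves tightness'' for radius-$\geq 4$ graphs is false; those graphs witness tightness only at $m=1$, which is precisely how the paper states it (via Theorem~\ref{Theorem:JOINT_Operation}). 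Your upper-bound example fails for the same reason: for $G=K_t$ the formula above gives $dem(K_t\vee mK_1)=t<t-1+m=\beta(K_t)+m$ whenever $m\geq 2$; the clique works only at $m=1$, where $K_t\vee K_1=K_{t+1}$ and Theorem~\ref{th-dem-n} gives $dem=t=\beta(K_t)+1$ --- again exactly the paper's example. (If one wants upper-bound sharpness for every $m$, one should instead take $G$ edgeless on $n\geq m$ vertices, so that $G\vee mK_1=K_{n,m}$ and $dem(K_{n,m})=\min\{n,m\}=m=\beta(G)+m$.)
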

\begin{proof}
For any graph $G$ and integer $m$, we have $dem(G \vee  mK_1) \leq \beta(G \vee  mK_1)$
by Theorem \ref{Theorem:Upperbond}. Clearly, $\beta(G \vee  mK_1)\leq \beta(G)+m$, and hence $dem(G \vee  mK_1) \leq \beta(G)+m$.
It suffices to show that an edge monitoring
set $M$ of $G \vee  mK_1$ also is cover set
of $G$. Without loss of generality,
suppose that $V(mK_1)=\{w_1,\cdots,w_m\}$.
If there exists an edge $uv\in E(G)$ with $u,v \notin M$, then
$uv$ is monitored by
$M \cap V(G)$ in $G \vee  mK_1$.
For any $x\in M$, we have
$ d_G(x,u)\in \{1, 2\}$. Similarly, $d_G(x,v)\in \{1, 2\}$.
By Corollary \ref{cor-EM},
we have $d_G(x,v) \neq d_G(x,u)$. Without loss of generality, let
$d_G(x,v)=1$ and $d_G(x,u)=2$, and hence
$xw_iv$ is a shortest path from
$x$ to $v$. From Corollary \ref{cor-EM},
$uv$ is not monitored by $M$, a contraction.
Then $x\in M$ or $y\in M$,
and hence $\beta(G) \leq  dem(G \vee  mK_1)$.
By Theorem \ref{Theorem:JOINT_Operation},
if $G$ has radius at least $4$ and $m=1$,
then $\beta(G) = dem(G \vee  K_1)$.
If $m=1$ and $G=K_n$, then
$dem(K_n \vee  K_1) = \beta(K_n)+1=n$, and hence the bound is sharp.
\end{proof}

\begin{proposition}
For any $r$-regular graph $G$ of order $n\geq 5$, we have
$$
\frac{rn}{2n-2} \leq dem(G)\leq n-1.
$$
Moreover, the bounds are sharp.
\end{proposition}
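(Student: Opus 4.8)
The plan is to establish the two inequalities separately, using tools already developed in the excerpt. For the lower bound, I would invoke Theorem~\ref{Theorem:Lowerbond}, which states $dem(G)\geq \frac{m}{n-1}$ for any graph of order $n$ and size $m$. Since $G$ is $r$-regular, the handshake lemma gives $m=\frac{rn}{2}$, so substituting directly yields $dem(G)\geq \frac{rn/2}{n-1}=\frac{rn}{2n-2}$. This half is essentially immediate once the edge count is plugged in.

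For the upper bound $dem(G)\leq n-1$, I would appeal to the general result of Foucaud {\it et al.} that $dem(G)\leq n-1$ holds for every connected graph of order $n$ (stated in the excerpt via Theorem~\ref{th-dem-n}, where equality characterizes complete graphs). An $r$-regular connected graph on $n\geq 5$ vertices is in particular a connected graph of order $n$, so the bound applies. Alternatively, one can take any spanning structure giving a vertex cover of size at most $n-1$ together with Theorem~\ref{Theorem:Upperbond}; but the cleanest route is the universal $n-1$ upper bound. The hypothesis $n\geq 5$ is not needed for the inequalities themselves—it is used only to guarantee the existence of regular graphs realizing the extremal cases for sharpness.

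The remaining—and genuinely more delicate—task is verifying sharpness of both bounds, and I expect this to be the main obstacle. For the upper bound, sharpness is witnessed by a regular graph with $dem(G)=n-1$; by Theorem~\ref{th-dem-n} this forces $G=K_n$, which is $(n-1)$-regular, so $K_n$ attains $dem(G)=n-1$ exactly. For the lower bound, I would seek a regular graph where $dem(G)$ equals $\frac{rn}{2n-2}$; a natural candidate is a graph whose edge set partitions into exactly $\frac{m}{n-1}$ spanning trees, so that $arb(G)=\frac{m}{n-1}$ and the arboricity bound in Theorem~\ref{Theorem:Lowerbond} is tight. Concretely, one looks for an $r$-regular graph on $n$ vertices admitting a decomposition into $r/2$ Hamiltonian cycles or $\frac{rn}{2(n-1)}$ edge-disjoint spanning trees, and then must argue that no distance-edge-monitoring set smaller than this arboricity value exists (which is automatic from the lower bound). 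The constraint $n\geq 5$ presumably ensures such a regular example exists and is well-defined. I would exhibit an explicit family—for instance a suitable circulant or a complete bipartite graph $K_{n/2,n/2}$ when $r=n/2$—and check by direct computation that its $dem$ value meets the lower bound, completing the sharpness argument.
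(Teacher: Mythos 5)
Your two inequalities are proved exactly as in the paper: the lower bound comes from Theorem~\ref{Theorem:Lowerbond} after substituting $m=\frac{rn}{2}$, and the upper bound from the universal bound $dem(G)\leq n-1$ (the paper routes this through Theorem~\ref{Theorem:Upperbond}, via $dem(G)\leq\beta(G)\leq n-1$, which is the same in substance as your argument). Your observation that $K_n$ is $(n-1)$-regular and attains the upper bound is correct, and in fact goes beyond the paper, whose proof never addresses sharpness of the upper bound at all.

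The genuine gap is the sharpness of the lower bound, and none of your proposed candidates can close it. For $K_{m,m}$, Theorem~\ref{th-EM} shows that $EM(x)$ is exactly the set of edges incident with $x$ (every vertex at distance $2$ from $x$ has all $m\geq 2$ vertices of the opposite part as neighbors in $N_1(x)$), so any distance-edge-monitoring set is a vertex cover and $dem(K_{m,m})=m$, whereas the bound evaluates to $\frac{m^2}{2m-1}<m$. Hamiltonian or spanning-tree decompositions do not help either: tight arboricity only yields $dem(G)\geq arb(G)$, not equality with $\frac{rn}{2n-2}$. Worse, equality $dem(G)=\frac{rn}{2n-2}$ forces $2(n-1)\mid rn$; since $\gcd(n-1,n)=1$, an odd $n$ would require $2(n-1)\mid r$, which is impossible for $r\leq n-1$, and an even $n$ requires $(n-1)\mid r\cdot\frac{n}{2}$ with $\gcd(n-1,\frac{n}{2})=1$, hence $r=n-1$, i.e.\ $G=K_n$, where $dem(K_n)=n-1>\frac{n}{2}$ for $n\geq 3$. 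So under the stated hypothesis $n\geq 5$ the lower bound is never attained with equality; contrary to your guess, the assumption $n\geq 5$ rules sharpness out rather than enabling it. You are in good company here: the paper's own proof claims tightness only via $dem(K_2)=1$ with $r=1$, $n=2$, which violates its own hypothesis $n\geq 5$, so the sharpness clause is not actually established there either.
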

\begin{proof}
For any $r$-regular graph graph $G$ of order $n$, since $e(G)=\frac{rn}{2}$, it
follows from Theorem \ref{Theorem:Lowerbond} that
$dem(G) \geq \frac{m}{n-1}$, and hence $ dem(G) \geq \frac{rn}{2n-2}$.
From Theorem \ref{Theorem:Upperbond},
we have $dem(G)\leq n-1$.
From Theorem \ref{th-dem-n}, if $r=1$ and $n=2$, then
$dem(K_2)=1$, and hence the lower bound is tight.
\end{proof}

\section{Graphs with distance-edge-monitoring number three}

For three vertices $u,v,w$ of a graph $G$ and non-negative integers $i,j,k$, let $B_{i,j,k}$ be the set of vertices at distance $i$ from $u$ and distance $j$ from $v$
and distance $k$ from $w$ in $G$, respectively.
\begin{lemma}\label{lem-char}
Let $G$ be a graph with $u,v,w \in V(G)$, and
$i,j,k$ be three non-negative integers such
that $B_{i,j,k} \neq \emptyset$. If $x \in B_{i,j,k} $, $xy\in E(G)$, and
$$
T=\left\{(i',j',k')\,|\,i' \in \{i-1,i,i+1\},
j' \in \{j-1,j,j+1\},
k' \in \{k-1,k,k+1\}\right\},
$$
then
$y \in B_{i',j',k'} $, where
$(i',j',k') \in T$.
\end{lemma}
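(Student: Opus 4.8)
The plan is to reduce the entire statement to the triangle inequality for the graph metric $d_G$. By the definition of $B_{i,j,k}$, the hypothesis $x \in B_{i,j,k}$ means precisely that $d_G(x,u)=i$, $d_G(x,v)=j$, and $d_G(x,w)=k$. The hypothesis $xy \in E(G)$ gives $d_G(x,y)=1$, since adjacent vertices are at distance exactly one. These two facts are the only inputs I would need.

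First I would apply the triangle inequality with respect to $u$: because $d_G$ is a metric on $V(G)$, we have $|d_G(y,u)-d_G(x,u)| \leq d_G(x,y)=1$, so that $d_G(y,u) \in \{i-1,\,i,\,i+1\}$. Repeating the identical argument with $v$ and then with $w$ in place of $u$ yields $d_G(y,v) \in \{j-1,\,j,\,j+1\}$ and $d_G(y,w) \in \{k-1,\,k,\,k+1\}$. Each of these is the same one-line estimate applied to a different reference vertex.

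Finally I would set $i' = d_G(y,u)$, $j' = d_G(y,v)$, and $k' = d_G(y,w)$. By the three containments just established, the triple $(i',j',k')$ satisfies $i' \in \{i-1,i,i+1\}$, $j' \in \{j-1,j,j+1\}$, $k' \in \{k-1,k,k+1\}$, which is exactly the condition defining membership in $T$; hence $(i',j',k') \in T$. Since $y$ is by construction at distance $i'$ from $u$, $j'$ from $v$, and $k'$ from $w$, we conclude $y \in B_{i',j',k'}$ with $(i',j',k') \in T$, as required.

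I do not anticipate any genuine obstacle here: the lemma is a three-fold application of the single observation that distances to a fixed vertex can change by at most one when one moves along an edge. The set $T$ is defined precisely to record the $3 \times 3 \times 3$ Cartesian product of the admissible shifts $\{-1,0,+1\}$ in each of the three coordinates, so the conclusion matches the estimate coordinate by coordinate. The only point worth stating carefully is that $y$ has well-defined (unique) distances to $u$, $v$, and $w$, so it lies in exactly one class $B_{i',j',k'}$, whose index triple we have shown belongs to $T$.
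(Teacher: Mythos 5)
Your proof is correct and follows essentially the same route as the paper: the paper's Claim inside the proof of Lemma 5.1 establishes $d_G(y,u)\in\{i-1,i,i+1\}$ by ruling out $d_G(y,u)\leq i-2$ and $d_G(y,u)\geq i+2$ via the triangle inequality, which is just the contrapositive phrasing of your one-line estimate $|d_G(y,u)-d_G(x,u)|\leq d_G(x,y)=1$, and then invokes symmetry for $v$ and $w$ exactly as you do.
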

\begin{proof}
Since $x \in B_{i,j,k} $ and $xy\in E(G)$, it follows that $d_G(x,u)=i$, $d_G(x,v)=j$
and $d_G(x,w)=k$. We have the following claim.

\begin{claim}\label{claim-dem-lem}
$d_G(y,u)\in \{i-1,i, i+1\}$.
\end{claim}
\begin{proof}
Assume, to the contrary, that $d_G(y,u)\leq i-2$ or $d_G(y,u)\geq i+2$. If $d_G(y,u)\leq i-2$, then $d_G(x,u)\leq d_G(u,y)+d_G(y,x)=d_G(u,y)+1\leq i-1$, which contradicts to the fact that $d_G(x,u)=i$. If $d_G(y,u)\geq i+2$, then $i+2\leq d_G(y,u)\leq d_G(u,x)+d_G(x,y)=i+1$, a contradiction.
\end{proof}

From Claim \ref{claim-dem-lem}, we have $d_G(y,u)\in \{i-1,i, i+1\}$. Similarly, $d_G(y,v)\in \{j-1,j, j+1\}$ and $d_G(y,w)\in \{k-1,k, k+1\}$.
\end{proof}

\begin{theorem}\label{th-dem-3}
For a graph $G$, $dem(G)=3$ if and only if
there exists three vertices $u,v,w$ in $G_b$ such that all of the following conditions $(1)$-$(8)$ hold in $G_b$:
\begin{description}
\item[] $(1)$ For any $i,j,k\in \{0,1,2,\ldots,diam(G)\}$, $B_{i,j,k}$ is an independent set.

\item[] $(2)$ For any $i,j,k\in \{0,1,2,\ldots,diam(G)\}$ and any $xy,xy'\in E(G_b)$, if $y\in V(B_{i',j',k'})$, then $y'\not \in V(B_{i',j',k'})$, where $i'\in \{i-1, i\}$, $j'\in \{j-1, j\}$, and $k'\in \{k-1, k\}$.

\item[] $(3)$ For any $i,j,k\in \{0,1,2,\ldots,diam(G)\}$ and any $xy,xy'\in E(B_{i,j,k})$, if $y \in B_{i_1,j_1,k_1}$, then $y'\notin B_{i_2,j_2,k_2}$,
where $(i_1,j_1,k_1)$ and $(i_2,j_2,k_2)$ satisfy all the following conditions:
\begin{itemize}
\item[] $(3.1)$ if $(i_1,j_1,k_1)=(i,j-1,k)$,
then
$(i_2,j_2,k_2) \notin \{(i_2,j-1,k_2)\,|\,i_2\in \{i-1, i, i+1\},~k_2\in \{k-1,k,k+1\}\}$.

\item[] $(3.2)$ if
$(i_1,j_1,k_1) =(i-1,j-1,k-1)$,
then
$(i_2,j_2,k_2) \notin \{(i_2,j_2,k_2)\,|
\, i_2\in \{i-1,i\},
\, j_2\in \{j-1,j\},
\, k_2\in \{k-1,k\}\}$.

\item[] $(3.3)$
if $(i_1,j_1,k_1)=(i-1,j+1,k-1)$,
then
$(i_2,j_2,k_2)
\notin
\{
(i-1, j, k-1),
(i-1, j, k),
(i-1, j, k-1),
(i-1, j, k-1),
(i, j, k-1)
\}$.

\item[] $(3.4)$ if $(i_1,j_1,k_1)=(i,j-1,k-1)$,
then
$(i_2,j_2,k_2)\notin
\{
(i-1, j-1, k-1 ),
(i, j-1, k-1),
(i, j, k-1),
(i, j-1, k),
(i+1, j-1, k-1)\}$.

\item[] $(3.5)$
if $(i_1,j_1,k_1) = (i,j-1,k+1)$,
then
$(i_2,j_2,k_2) \notin
\{(i, j-1, k),
(i, j-1, k)\}$.
\end{itemize}

\item[] $(4)$ For any $i,j,k\in \{0,1,2,\ldots,diam(G)\}$, there is no $4$-path satisfying the following conditions.

\begin{itemize}
\item[] $(4.1)$ $z_1xyz_2$ is the $4$-path
      with $x \in  B_{i,j,k} $,
and $y\in  B_{i-1,j+1,k+1}$,
$z_1 \in  B_{i-1,a, b} $, and
$z_2\in B_{c,j, k}$, where $a \in \{j-1, j + 1\}$, $b \in \{k-1, k + 1\}$, $c\in \{i-2, i\}$.

\item[] $(4.2)$ $4$-vertex path
      $z_1xyz_2$ with
    $z_1 \in  B_{i-1,a, k-1}$,
    $z_2 \in B_{c,j, b}$,
    $x   \in  B_{i,j,k}$, and
    $y   \in  B_{i-1,j+1,k+1}$, where
$a \in \{j-1, j + 1\}$,
$b \in \{k-2, k \}$,
$c\in \{i-2, i\}$.

\item[] $(4.3)$ $4$-vertex path
      $z_2xyz_3$ with $x=B_{i,j,k}$, $y=B_{i,j-1,k+1}$ and
\begin{align*}
z_2 \in &
B_{i-1,j-1,k-1}
\cup B_{i-1,j-1,k}
\cup B_{i-1,j-1,k+1}
\cup B_{i,j-1,k-1}
\cup B_{i,j-1,k+1}
\cup B_{i+1,j-1,k-1}
\cup B_{i+1,j-1,k}\\
& \cup B_{i+1,j-1,k+1},
\\
z_3 \in &
B_{i-1,j-2,k}
\cup B_{i-1,j-1,k}
\cup B_{i-1,j,k}
\cup B_{i,j-2,k}
\cup B_{i,j,k}
\cup B_{i+1,j-2,k}\\
& \cup B_{i+1,j-1,k}
\cup B_{i+1,j,k}.
\end{align*}

\end{itemize}

\item[] $(5)$ For any $i,j,k\in \{0,1,2,\ldots,diam(G)\}$ and any $x \in  B_{i,j,k} $, $x$ has at most two neighbors in two of $B_{i-1,j-1,k-1} ,B_{i+1,j-1,k-1}(u, v,w),B_{i-1,j+1,k'} $,
      where $k'\in \{k-1,k,k+1\}$.

\item[] $(6)$ For any $i,j,k\in \{0,1,2,\ldots,diam(G)\}$ and any $x \in  B_{i,j,k} $,  there is no $4$-star $K_{1,4}$ with edge set
$E(K_{1,4})= \{yx, z_1x, z_2x,z_3x\}$
such that $y\in B_{i-1,j-1,k-1}$,
\begin{align*}
z_1 \in &
B_{i-1,j-1,k+1}
\cup B_{i-1,j,k+1}
\cup B_{i-1,j+1,k-1}\cup B_{i-1,j+1,k}
\cup B_{i-1,j+1,k+1}, \\
z_2 \in&
B_{i-1,j-1,k+1}
\cup B_{i,j-1,k+1}
\cup B_{i+1,j-1,k-1}
\cup B_{i+1,j-1,k}
\cup B_{i+1,j-1,k+1}, \\
z_3  \in&
B_{i-1,j+1,k-1}
\cup B_{i,j+1,k-1}
\cup B_{i+1,j-1,k-1}
\cup B_{i+1,j,k-1}
\cup B_{i+1,j+1,k-1},
\end{align*}

\item[] $(7)$  There is a no $P_4^{+}$ satisfying the following conditions:

$(7.1)$ $V(P_4^{+})=\{z_1, z_2, x, y, z_3\}$
and $E(P_4^{+})=\{z_1x, z_3x, xy, yz_2\}$
such that $x=B_{i,j,k}$,
$y=B_{i-1,j+1,k-1}$, and
\begin{align*}
z_1\in&
B_{i-1,j-1,k-1}
\cup B_{i-1,j-1,k}
\cup B_{i-1,j-1,k+1}
\cup B_{i-1,j,k+1}
\cup B_{i-1,j+1,k-1}
\cup B_{i-1,j+1,k}
\cup B_{i-1,j+1,k+1},\\
z_2 \in&
B_{i-2,j,k-2}
\cup B_{i-2,j,k-1}
\cup B_{i-2,j,k}
\cup B_{i-1,j,k-2}
\cup B_{i-1,j,k}
\cup B_{i,j,k-2}
\cup B_{i,j,k-1}
\cup B_{i,j,k},\\
z_3\in&
B_{i-1,j-1,k-1}
\cup B_{i-1,j+1,k-1}
\cup B_{i,j-1,k-1}
\cup B_{i,j+1,k-1}
\cup B_{i+1,j-1,k-1}
\cup B_{i+1,j,k-1}
\cup B_{i+1,j+1,k-1}.
\end{align*}

$(7.2)$
$V(P_4^{+})=\{z_2, z_3, x, y, z_1\}$
and $E(P_4^{+})=\{z_2x, z_3x, xy, yz_1\}$
such that $x=B_{i,j,k}$,
$y=B_{i+1,j-1,k-1}$, and
\begin{align*}
z_1 \in&
B_{i,j-2,k-2}
\cup B_{i,j-2,k-1}
\cup B_{i,j-2,k}\cup B_{i,j-1,k-2}
\cup B_{i,j-1,k}\cup B_{i,j,k-2}\cup B_{i,j,k-1},\\
z_2\in&
B_{i-1,j-1,k-1}
\cup B_{i-1,j-1,k}
\cup B_{i-1,j-1,k+1}\cup B_{i,j-1,k-1}
\cup B_{i,j-1,k}
\cup B_{i+1,j-1,k+1} \cup B_{i+1,j-1,k-1}
\\
&\cup B_{i+1,j-1,k}
\cup B_{i+1,j-1,k+1},\\
z_3 \in&
B_{i-1,j-1,k-1}
\cup B_{i-1,j,k-1}
\cup B_{i-2,j+1,k-1} \cup B_{i,j-1,k-1}
\cup B_{i,j,k-1}
\cup B_{i,j+1,k-1}\cup B_{i+1,j-1,k-1} \\
&
\cup B_{i+1,j,k-1}
\cup B_{i+1,j+1,k-1}.
\end{align*}

\item[] $(8)$ There is no $3$-star $K_{1,3}$ with edge set $E(K_{1,3})=\{ xy, xz_1, x z_2\}$
such that $x=B_{i,j,k}$,
$y=B_{i,j-1,k-1}$, and
\begin{align*}
z_2 \in &
B_{i-1,j-1,k}
\cup  B_{i-1,j-1,k+1}
\cup  B_{i,j-1,k+1}
\cup  B_{i+1,j-1,k}
\cup  B_{i+1,j-1,k+1},\\
z_3 \in &
B_{i-1,j,k-1}
\cup B_{i-1,j+1,k-1}
\cup  B_{i,j+1,k-1}
\cup  B_{i+1,j,k-1}
\cup  B_{i+1,j+1,k-1},
\end{align*}

\end{description}
\end{theorem}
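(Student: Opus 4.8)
The plan is to turn the global statement $dem(G)=3$ into a local, box-by-box statement about the sets $B_{i,j,k}$ via the edge-monitoring criterion of Theorem~\ref{th-EM}. First I would pass to the base graph: since $dem(G)=dem(G_b)$ and every edge meeting a degree-one vertex is a bridge, hence monitored by every probe (Theorem~\ref{Th-Bridge}), it suffices to decide whether three vertices $u,v,w$ of $G_b$ monitor all of $E(G_b)$. The single fact I would use throughout is the reformulation of Theorem~\ref{th-EM}: for an edge $xy$ and a probe $t$, writing the endpoints so that $d_G(x,t)=d_G(y,t)+1$, we have $xy\in EM(t)$ iff $y$ is the \emph{only} neighbour of $x$ in $N_{d_G(y,t)}(t)$; equivalently $xy\notin EM(t)$ iff $d_G(x,t)=d_G(y,t)$, or the endpoint farther from $t$ has a second neighbour in the nearer sphere. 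By Lemma~\ref{lem-char} each neighbour of $x\in B_{i,j,k}$ sits in one of the $27$ boxes obtained by shifting each of $i,j,k$ by at most one, and $t$ can monitor $xy$ only in a coordinate where the $t$-distance actually changes. I would therefore organise everything by how many of the three coordinates change along the edge, the case of no change being exactly an intra-box edge, monitored by nobody; this is condition~$(1)$.

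For necessity I would assume $M=\{u,v,w\}$ monitors $E(G_b)$ and prove each of $(1)$--$(8)$ contrapositively. A violation of any condition names a short subgraph — an edge, a path $z_1xyz_2$, a star $K_{1,3}$ or $K_{1,4}$, or a $P_4^{+}$ — whose central edge $xy$ I would show lies outside $EM(u)\cup EM(v)\cup EM(w)$: for each probe $t$ the listed box memberships force either a distance collision $d_G(x,t)=d_G(y,t)$, or provide the very witness vertex (the $z_i$ or $y'$) that gives the farther endpoint a second neighbour in the nearer $t$-sphere, so by the reformulation $t$ fails to monitor $xy$. Simultaneous failure for all three probes contradicts that $M$ is a monitoring set, so the condition must hold.

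For sufficiency I would assume $u,v,w$ satisfy $(1)$--$(8)$ and take an arbitrary edge $xy$ with $x\in B_{i,j,k}$. By $(1)$ some coordinate changes, so some probe $t$ has $d_G(x,t)\ne d_G(y,t)$; taking $x$ farther, either $y$ is the unique nearer neighbour of $x$ and $t$ monitors $xy$ by the reformulation, or $x$ has a second nearer neighbour $y'$. In the latter case I would chase this non-uniqueness through the other two probes: each remaining coordinate either collides or again demands a witness, and tracking the boxes that $y$, $y'$ and any further witnesses must occupy reproduces exactly one of the forbidden configurations of $(2)$--$(8)$. Since all of these are excluded, no edge can escape all three probes, so $\{u,v,w\}$ is a distance-edge-monitoring set and $dem(G)\le 3$. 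To pin the value to $3$ I would add that $G_b$ is not a tree and cannot meet the two-probe criterion of Theorem~\ref{th-dem-2} (otherwise $dem(G)\le 2$ by Theorems~\ref{th-dem-1} and~\ref{th-dem-2}), giving $dem(G)=3$.

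The hard part will be the bookkeeping, not any individual deduction. ``Escaping all three probes'' amounts to choosing, independently for each probe, one of two failure modes and then intersecting the resulting box constraints, which is what produces the long index unions in $(4.3)$, $(6)$, $(7.1)$--$(7.2)$ and $(8)$; an off-by-one in any coordinate would either admit a monitored edge (breaking necessity) or outlaw a harmless one (breaking sufficiency). The genuinely delicate point is exhaustiveness in both directions — that every simultaneous-escape pattern is captured by some condition, and that each listed configuration really does escape all three probes. I expect to tame this by a symmetry reduction: since the three coordinate slots are interchangeable up to relabelling $u,v,w$, it suffices to analyse one representative per symmetry class of ``which coordinates change and in which direction'' and transport the remaining subcases by permuting the slots, which is presumably why the conditions come grouped as they are.
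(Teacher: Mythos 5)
Your plan follows the paper's proof essentially step for step: necessity is established per condition by contradiction, showing via Theorem~\ref{th-EM} that the central edge $xy$ of each forbidden configuration escapes all of $u,v,w$, and sufficiency by a case analysis over the $27$ neighbour boxes of Lemma~\ref{lem-char}, where the witness vertices forced by non-monitoring reproduce one of the configurations excluded by $(2)$--$(8)$. Your one divergence is the closing step invoking Theorems~\ref{th-dem-1} and~\ref{th-dem-2} to rule out $dem(G)\le 2$: the paper silently omits this (it concludes $dem(G)=3$ directly from $\{u,v,w\}$ being a monitoring set), and since conditions $(1)$--$(8)$ alone do not exclude $dem(G)\le 2$ (any third vertex added to a two-vertex monitoring set still satisfies them), your instinct to flag the lower bound is sounder than the published argument, even though it cannot actually be derived from the stated hypotheses.
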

\begin{proof}
Assume that $dem(G) = 3$. Then $dem(G_b) = 3$.
Let $\{u, v,w\}$ be a distance-edgemonitoring set of $G_b$.

\setcounter{claim}{0}
\begin{claim}\label{claim1}
$B_{i,j, k} $ is an independent set.
\end{claim}
\begin{proof}
Assume, to the contrary, that $B_{i,j, k} $ is not an independent set.
Let $x,y\in B_{i,j, k}(u,v,w)$. Then $d_G(x,u)=d_G(y,u)=i$, $d_G(x,v)=d_G(y,v)=j$, and $d_G(x,w)=d_G(y,w)=k$, and hence $xy$ can not be monitored by $u,v,w$
by Theorem \ref{th-EM}, a contradiction.
\end{proof}

From Claim \ref{claim1}, $(1)$ holds.

For any $i,j,k\in \{0,1,2,\ldots,diam(G)\}$ and any $x\in B_{i,j,k} $ with $xy,xy'\in E(G_b)$, we assume that
$y\in B_{i',j',k'} $
and $(i',j',k')\neq (i,j,k)$.
Then we have the following claim.
\begin{claim}\label{claim2}
$y'\not \in B_{i',j',k'}$ for $i'\in \{i-1, i\}$, $j'\in \{j-1, j\}$, and $k'\in \{k-1, k\}$.
\end{claim}
\begin{proof}
Assume, to the contrary,
that $y'\in B_{i',j',k'} $.
We first suppose that
$y'\in B_{i-1 , j , k } $
(The case that $y, y' \in B_{i , j-1 , k } $ or $B_{i , j, k-1 } $ is symmetric). Then $d_G(y,u)=d_G(y',u)=i-1$. From Theorem \ref{th-EM},
$xy$ can not be monitored by $u$.
Since $x \in B_{i,j,k}(u, v,w)$ and
$y \in B_{i-1 , j , k } $, it follows that
$d_G(y,v)=d_G(x,v)=j$. From Theorem \ref{th-EM},
$xy$ can not be monitored by $v$. Similarly, since $d_G(y,w)=d_G(x,w)=k$,
it follows that $xy$ can not be monitored by $w$
by Theorem \ref{th-EM}, a contradiction.

Next, we suppose that $y, y' \in B_{i-1 , j-1 , k } $
(The case that $y, y' \in B_{i , j-1 , k-1 } $
or $B_{i-1 , j, k-1 } $ is symmetric).
Since $d_G(y,u)=d_G(y',u)=i-1$, it follows from Theorem \ref{th-EM} that $xy$ can not be monitored by $u$.
Similarly, $d_G(y,v)=d_G(y',v)=j-1$, $xy$ can not be monitored by $v$.
in addition,
$d_G(y, w)=d_G(x,w)=k$,
and hence
$xy$ is not monitored by $w$,
according to Theorem \ref{th-EM}.
So, $xy$ is not monitored by $u$, $v$, $w$,
a contradiction.

Finally, if $y, y' \in B_{i-1 , j-1 , k-1} $,
it follows that
$d_G(y, u)=d_G(y',u)=i-1$,
$d_G(y, v)=d_G(y',v)=j-1$,
similarly,
$d_G(y,w)=d_G(y',w)=k-1$,
by Theorem \ref{th-EM},
$xy$ is not monitored by $u$, $v$, $w$,
a contradiction.
\end{proof}
From Claim \ref{claim2}, $(2)$ holds.

For any $i,j,k\in \{0,1,2,\ldots,diam(G)\}$ and any $xy,xy'\in E(G_b)$, we suppose that $y\in B_{i,j-1,k}$. Then we have the following claim.
\begin{claim}\label{claim3}
$y'\notin B_{i_2,j_2,k_2}$ for
$(i_2,j_2,k_2) \in \{(i_2,j-1,k_2)\,|\,i_2\in \{i-1, i, i+1\},~k_2\in \{k-1,k,k+1\}\}.
$
\end{claim}
\begin{proof}
Assume, to the contrary, that $y' \in B_{i_2,j_2,k_2}$. Since $x \in B_{i,j,k}$
and $y,y'$ are both neighbors of $x$ and $y \in B_{i,j-1,k}$,
it follows that
$d_G(y, u)=d_G(x,u)=i$ and
$d_G(y, w)=d_G(x,w)=k$. From Theorem \ref{th-EM},
$xy$ is not monitored by $u$ and $w$.
Since $y' \in B_{i_2,j-1,k_2}$
and $y \in B_{i,j-1,k} $,
it follows that $d_G(y', v)=d_G(y,v)=j-1$, and hence
$xy$ is not monitored by $v$,
a contradiction.
\end{proof}

By Claim \ref{claim3}, $(3.1)$ holds. By the same method, we can prove that
$(3.2)$-$(3.6)$ all hold.

\begin{claim}\label{claim4}
For any $i,j,k\in \{0,1,2,\ldots,diam(G)\}$, there is no $4$-vertex path
$z_1xyz_2$ such that $x \in  B_{i,j,k} $, $y \in  B_{i-1,j+1,k+1} $,
$z_1 \in  B_{i-1,a, b} $, and $z_2\in B_{c,j, k} $, where $a \in \{j-1, j + 1\}$, $b\in \{k-1, k + 1\}$, $c\in \{i-2, i\}$.
\end{claim}
\begin{proof}
Assume, to the contrary, that there is a $4$-path satisfying the conditions of this claim.
Then $d_G(y,u)=d_G(z_1,u)=i-1$, $d_G(x,v)=d_G(z_2,v)=j$ and $d_G(x, w)=d_G(z_2,w)=k$, and from Theorem \ref{th-EM}, $xy$ can not be monitored by $u,v,w$, respectively, a contradiction.
\end{proof}

By Claim \ref{claim4}, $(4.1)$ holds.
Similarly, the conditions $(4.2)$ and $(4.3)$ can be easily proved.

\begin{claim}\label{claim5}
For any $i,j,k\in \{0,1,2,\ldots,diam(G)\}$ and any $x\in  B_{i,j,k}$, $x$ has at most two neighbors in two of $B_{i-1,j-1,k-1},B_{i+1,j-1,k-1}(u, v,w),B_{i-1,j+1,k'}$,
where $k'\in \{k-1,k,k+1\}$.
\end{claim}
\begin{proof}
Assume, to the contrary, that
$x\in B_{i,j,k}$ has three neighbors $y,y',y''$
such that
$y \in B_{i-1,j-1,k-1} $,
$y' \in B_{i+1,j-1,k-1}(u, v,w)$,
$y'' \in B_{i-1,j',k'} $
where $k'\in \{k-1,k,k+1\}$
and $j'\in \{j-1, j, j+1\}$.
Since $y\in B_{i-1,j-1,k-1} $ and
$y'\in B_{i+1,j-1,k-1}$, it follows that
$d_G(y, v)=d_G(y',v)=j-1$
and
$d_G(y, w)=d_G(y',w)=k-1$.
From Theorem \ref{th-EM},
$xy$ is not monitored by $v,w$.
Since
$y\in B_{i-1,j-1,k-1}$ and
$y''\in B_{i-1,j',k'}$,
it follows that
$d_G(y, u)=d_G(y'',u)=i-1$,
and hence $xy$ is not monitored by $u$, and so
$xy$ is not monitored by $u,v,w$,
a contradiction.
\end{proof}

From Claim \ref{claim5},
$(5)$ holds.

\begin{claim}\label{claim6}
For any $i,j,k\in \{0,1,2,\ldots,diam(G)\}$ and any $x \in  B_{i,j,k} $,  there is no $4$-star $K_{1,4}$ with edge set
$E(K_{1,4})= \{yx, z_1x, z_2x,z_3x\}$
such that $y\in B_{i-1,j-1,k-1}$,
\begin{align*}
z_1 \in &
B_{i-1,j-1,k+1}
\cup B_{i-1,j,k+1}
\cup B_{i-1,j+1,k-1}\cup B_{i-1,j+1,k}
\cup B_{i-1,j+1,k+1}, \\
z_2 \in&
B_{i-1,j-1,k+1}
\cup B_{i,j-1,k+1}
\cup B_{i+1,j-1,k-1}
\cup B_{i+1,j-1,k}
\cup B_{i+1,j-1,k+1}, \\
z_3  \in&
B_{i-1,j+1,k-1}
\cup B_{i,j+1,k-1}
\cup B_{i+1,j-1,k-1}
\cup B_{i+1,j,k-1}
\cup B_{i+1,j+1,k-1},
\end{align*}
\end{claim}
\begin{proof}
Assume, to the contrary,
that
$x \in B_{i ,j ,k } $ has four neighbors $y,z_1,z_2,z_3$ satisfying the conditions of this claim. Then $d_G(y,u)=d_G(z_1,u)=i-1$. From Theorem \ref{th-EM},
$xy$ can not be monitored by $u$.
Similarly, since
$d_G(y,v)=d_G(z_2,v)=j-1$, it follows from Theorem \ref{th-EM} that
$xy$ can not be monitored by $v$. Similarly, since $d_G(y,w)=d_G(z_3,w)=k-1$,
it follows that $xy$ can not be monitored by $w$, a contradiction.
\end{proof}

From Claim \ref{claim6},
$(6)$ holds.

\begin{claim}\label{claim7}
There is no $P_4^{+}$ with vertex set
$\{z_1, z_2, x, y, z_3\}$
and edge set $\{z_1x, z_2x, xy, yz_3\}$ such that $x\in B_{i,j,k}$, $y\in B_{i-1,j+1,k-1}$, and
\begin{align*}
z_1\in&
B_{i-1,j-1,k-1}
\cup B_{i-1,j-1,k}
\cup B_{i-1,j-1,k+1}
\cup B_{i-1,j,k+1}
\cup B_{i-1,j+1,k-1}
\cup B_{i-1,j+1,k}
\cup B_{i-1,j+1,k+1},\\
z_2 \in&
B_{i-2,j,k-2}
\cup B_{i-2,j,k-1}
\cup B_{i-2,j,k}
\cup B_{i-1,j,k-2}
\cup B_{i-1,j,k}
\cup B_{i-1,j,k-1}
\cup B_{i,j,k-2}
\cup B_{i,j,k}
\cup B_{i,j,k-1}\\
z_3\in&
B_{i-1,j-1,k-1}
\cup B_{i-1,j+1,k-1}
\cup B_{i,j-1,k-1}
\cup B_{i,j,k-1}
\cup B_{i,j+1,k-1}
\cup B_{i+1,j-1,k-1}
\cup B_{i+1,j,k-1}\\
&\cup B_{i+1,j+1,k-1}.
\end{align*}
\end{claim}
\begin{proof}
Assume, to the contrary, that there is $P_4^{+}$ satisfying the conditions of this claim.
Since $d_G(y,u)=d_G(z_1,u)=i-1$, $d_G(x,v)=d_G(z_2,v)=j$ and $d_G(y,w)=d_G(z_3,w)=k-1$, it follows from Theorem \ref{th-EM} that
$xy$ can not be monitored by $u,v,w$, respectively,
a contradiction.
\end{proof}

From Claim \ref{claim7},
$(7.1)$ holds. Similarly, we can prove that $(7.2)$ holds.

\begin{claim}\label{claim8}
There is no $3$-star $K_{1,3}$ with vertex set
$\{z_1, z_2, x, y\}$
and edge set $\{xy, xz_1, xz_2\}$
such that $x=B_{i,j,k}$,
$y=B_{i,j-1,k-1}$, and
\begin{align*}
z_1\in&
B_{i-1,j-1,k}
\cup B_{i-1,j-1,k+1}
\cup B_{i, j-1, k}
\cup B_{i,j-1,k+1}
\cup B_{i+1,j-1,k}
\cup B_{i+1,j-1,k}
\cup B_{i+1,j-1,k+1},\\
z_2\in&
B_{i-1,j-1,k-1}
\cup B_{i-1,j,k-1}
\cup B_{i-1,j+1,k-1}
\cup B_{i,j,k-1}
\cup B_{i,j+1,k-1}
\cup B_{i+1,j,k-1}
\cup B_{i+1,j+1,k-1}.
\end{align*}
\end{claim}
\begin{proof}
Assume, to the contrary, that there is a $K_{1,3}$ such that
$V(K_{1,3})=\{z_1, z_2, x, y\}$
and $E(K_{1,3})=\{xy, xz_1, xz_2\}$. Since
$d_G(x,u)=d_G(y,u)=i$, $d_G(y,v)=d_G(z_1,v)=j-1$ and $d_G(y,w)=d_G(z_2,w)=k-1$, it follows from Theorem \ref{th-EM} that $xy$ is not monitored by $u,v,w$,
a contradiction.
\end{proof}

From Claim \ref{claim8},
$(8)$ holds.

Conversely, we assume that there exists three vertices $u,v,w$ in $G_b$ such that all of the conditions $(1)$-$(8)$ holds in $G_b$.
It suffices to prove that $\{u,v,w\}$ is a distance-edge-monitoring set in $G_b$, and hence $dem(G) = 3$. Let $xy$ be any
edge of $G$ with $x\in B_{i, j, k} $.
Since $(1)$ holds, it follows that
$y\notin B_{i, j, k} $. Then we have the following cases:

\begin{case}\label{Case:1}
$y \in B_{i, j-1, k} $ or
$y \in B_{i-1, j, k} $ or
$y \in B_{i, j, k-1} $ or
$y \in B_{i, j+1, k} $ or
$y \in B_{i, j, k+1} $ or
$y \in B_{i+1, j, k} $.
\end{case}

For $x\in B_{i, j, k} $ and $y \in B_{i, j-1, k}$, we assume that $xy$ can not be monitored by $\{u,v,w\}$.
Then there is a path $P_j$ of length $j$ from $x$ to $v$ such that $xy\notin E(P_j)$.
Let $z_2$ be the neighbor of $x$ in $P_j$.
From Lemma \ref{lem-char}, we have $z_2\in B_{i', j-1, k'} $,
where $i' \in \{i-1, i, i+1\}$
and $k' \in \{k-1, k, k+1\}$, which contradicts to the condition $(3.1)$.
Suppose that $x\in B_{i, j, k} $ and $y \in B_{i, j-1, k} $. Then $xy$ can be monitored by $\{u,v,w\}$. Similarly, the edges $xy$ can be also monitored by $\{u,v,w\}$, where $y \in B_{i-1, j, k} $ or
$y \in B_{i, j-1, k} $ or
$y \in B_{i, j, k-1} $ or
$y \in B_{i, j+1, k+1} $ or
$y \in B_{i+1, j, k+1} $ or
$y \in B_{i+1, j+1, k} $.

\begin{case}\label{Case:2}
$y \in  B_{i-1, j-1, k-1} $ or $y \in B_{i+1, j+1, k+1} $.
\end{case}

For $x\in B_{i, j, k} $ and $y \in  B_{i-1, j-1, k-1}$, we assume that $xy$ is not monitored by $\{u,v,w\}$. Then there exists a path $P_i,P_j,P_k$ of length $i,j,k$ from $x$ to $u,v,w$ such that $xy\notin E(P_i)$ and $xy\notin E(P_j)$ and $xy\notin E(P_k)$, respectively.
Let $z_1,z_2,z_3$ be the neighbors of $x$ in $P_i,P_j,P_k$, respectively. Then $z_1 \in B_{i-1, j', k'} $ and $z_2 \in B_{i'', j-1, k''} $ and $z_3 \in B_{i''', j''', k-1} $.
where $i'',i'''\in \{i-1,i,i+1\}$,
$j',j'''\in \{j-1,j,j+1\}$ and
$k',k'' \in \{k-1, k, k+1\}$.
Since $x\in B_{i, j, k} $ and $y \in  B_{i-1, j-1, k-1}$, it follows from the conditions $(2)$ and $(3.2)$ that
for any $z_i\in N(x)$ ($1 \leq i \leq 3$), it follows that
$z_1 \notin \{
B_{i-1, j-1, k-1} ,
B_{i-1, j-1, k} ,
B_{i-1, j, k-1} ,
B_{i-1, j, k}
\}$,
$z_2 \notin \{$
$B_{i-1, j-1, k-1} $,
$B_{i-1, j-1, k} $,
$B_{i, j-1, k-1} $,
$B_{i, j-1, k} $
$\}$,
$z_3 \notin \{$
$B_{i-1, j-1, k-1} $,
$B_{i-1, j, k-1} $,
$B_{i, j-1, k-1} $,
$B_{i, j, k-1} $
$\}$,
and hence there is a $4$-star with edge set $\{yx, z_1x, z_2x,z_3x\}$
such that $y\in B_{i-1,j-1,k-1}$,
\begin{align*}
z_1 \in &
B_{i-1,j-1,k+1}
\cup B_{i-1,j,k+1}
\cup B_{i-1,j+1,k-1}\cup B_{i-1,j+1,k}
\cup B_{i-1,j+1,k+1}, \\
z_2 \in&
B_{i-1,j-1,k+1}
\cup B_{i,j-1,k+1}
\cup B_{i+1,j-1,k-1}
\cup B_{i+1,j-1,k}
\cup B_{i+1,j-1,k+1}, \\
z_3  \in&
B_{i-1,j+1,k-1}
\cup B_{i,j+1,k-1}
\cup B_{i+1,j-1,k-1}
\cup B_{i+1,j,k-1}
\cup B_{i+1,j+1,k-1},
\end{align*}
which contradicting to the condition $(6)$.

So, $xy$ can be monitored by $\{u,v,w\}$. Similarly, the edges $xy$ can be also monitored by $\{u,v,w\}$,
where $y \in B_{i+1, j+1, k+1} $.

\begin{case}\label{Case:3}
$y \in B_{i-1,j+1,k-1} $ or
$y \in B_{i+1,j-1,k-1} $ or
$y \in B_{i-1,j-1,k+1} $ or
$y \in B_{i+1,j-1,k+1} $ or
$y \in B_{i-1,j+1,k+1} $ or
$y \in B_{i+1,j+1,k-1} $.
\end{case}

For $x\in B_{i, j, k} $ and $y \in B_{i-1,j+1,k-1} $, we assume that $xy$ is not monitored by $\{u,v,w\}$.
Then there exists a path $P_i$ of length $i$ from $x$ to $u$ such that $xy\notin E(P_i)$, and
there exist two paths $P_{j+1},P_k$ of length $j+1,k$ from $y$ to $v,w$ such that $xy\notin E(P_{j+1})\cup E(P_{k})$, respectively. Let $z_1,z_3$ be the neighbors of $x$ on the $P_i,P_k$, respectively.
In addition,
let $z_2$ be the neighbors of $y$ on the $P_{j+1}$.

Thus, there is a $5$-vertex graph $P_4^+$
with
$z_1 \in  B_{i-1,a, b} $,
$z_2\in B_{a',j, b'} $,
$x \in  B_{i,j,k} $,
$y \in  B_{i-1,j+1,k+1} $,
$z_3\in B_{a'',b'', k-1} $,
$a, b''   \in \{j-1,j, j + 1\}$,
$b \in \{k-1,k, k + 1\}$,
$a' \in \{i-2,i-1, i\}$,
$b' \in \{k-2,k-1, k\}$,
$a'' \in \{i-1,i, i+1\}$.

Since $y \in B_{i-1,j+1,k-1}$, it follows from the condition $(2)$ and $(3.3)$ that for any  $z_i\in N(x)$ ($1 \leq i \leq 3$),
we have
$z_1 \notin \{
B_{i-1, j, k-1} ,
B_{i-1, j, k}
\}$,
$z_2 \notin \{
B_{i-1, j, k-1}
\}$,
$z_3 \notin \{
B_{i-1, j, k-1} ,
B_{i, j, k-1}
\}$.
Furthermore, we have
\begin{align*}
z_1\in&
B_{i-1,j-1,k-1}
\cup B_{i-1,j-1,k}
\cup B_{i-1,j-1,k+1}
\cup B_{i-1,j,k+1}
\cup B_{i-1,j+1,k-1}
\cup B_{i-1,j+1,k}
\cup B_{i-1,j+1,k+1},\\
z_2 \in&
B_{i-2,j,k-2}
\cup B_{i-2,j,k-1}
\cup B_{i-2,j,k}
\cup B_{i-1,j,k-2}
\cup B_{i-1,j,k}
\cup B_{i,j,k-2}
\cup B_{i,j,k-1}
\cup B_{i,j,k}.\\
z_3\in&
B_{i-1,j-1,k-1}
\cup B_{i-1,j+1,k-1}
\cup B_{i,j-1,k-1}
\cup B_{i,j+1,k-1}
\cup B_{i+1,j-1,k-1}
\cup B_{i+1,j,k-1}\\
&\cup B_{i+1,j+1,k-1}.
\end{align*}
which contradicts to the condition $(7.1)$.

\begin{case}\label{Case:4}
$y \in B_{i,j-1,k-1} $ or
$y \in B_{i-1, j-1, k} $ or
$y \in B_{i-1, j, k-1} $ or
$y \in B_{i, j+1, k+1} $ or
$y \in B_{i+1, j+1, k} $ or
$y \in B_{i+1, j, k+1} $.
\end{case}

For $x\in B_{i, j, k} $ and $y \in B_{i-1,j+1,k-1} $, we assume that $xy$ is not monitored by $\{u,v,w\}$.
Then there is a path of length $j$
from $x$ to $v$, say $P_{j}$,
such that $xy\notin E(P_{j})$.
Similarly,
there is a path of length $k$, say $P_{k}$,
from $y$ to $w$ such that
$xy\notin E(P_{k})$.
Let $z_2,z_3$ be the neighbors of $x$ on the $P_{j},P_k$, respectively.
Then there is a $3$-star $K_{1,3}$ with edge set $\{xy,xz_2,xz_3\}$ such that $x \in  B_{i,j,k}$,
$y \in  B_{i,j-1,k-1}$,
$z_2 \in  B_{a,j-1, c}$,
$z_3\in B_{a', b',k-1}$, where
$a, a' \in \{i-1,i, i+ 1\}$,
$c \in \{k-1,k, k + 1\}$,
$b' \in \{j-1, j, j+1\}$.
If $y \in B_{i,j-1,k-1}$, then
for any $z_i\in N(x)$
($2 \leq i \leq 3$), it follows from
the conditions $(2)$ and $(3.4)$,
that
\begin{align*}
z_2  \notin &
B_{i-1, j-1, k-1}\cup  B_{i, j-1, k-1} \cup  B_{i, j-1, k}\cup  B_{i+1, j-1, k-1},\\
 z_3  \notin &
B_{i-1, j-1, k-1}\cup  B_{i, j-1, k-1} \cup  B_{i, j, k-1}\cup  B_{i+1, j-1, k-1},
\end{align*}
and hence
\begin{align*}
x \in &  B_{i,j,k}, y \in  B_{i,j-1,k-1}, \\
z_2 \in &
B_{i-1,j-1,k}
\cup  B_{i-1,j-1,k+1}
\cup  B_{i,j-1,k+1}
\cup  B_{i+1,j-1,k}
\cup  B_{i+1,j-1,k+1},\\
z_3 \in &
B_{i-1,j,k-1}
\cup B_{i-1,j+1,k-1}
\cup  B_{i,j+1,k-1}
\cup  B_{i+1,j,k-1}
\cup  B_{i+1,j+1,k-1},
\end{align*}
which contradicts to the condition $(8)$.

Similarly,  if $x\in B_{i, j, k} $ and
$y \in B_{i,j-1,k-1} $,
then $xy$ can be monitored by $\{u,v,w\}$.
By the same method, we can prove that the edges $xy$ can be also monitored by $\{u,v,w\}$, where
$y \in B_{i-1, j-1, k} $ or
$y \in B_{i-1, j, k-1} $ or
$y \in B_{i, j+1, k+1} $ or
$y \in B_{i+1, j+1, k} $ or
$y \in B_{i+1, j, k+1} $.

\begin{case}\label{Case:5}
$y \in B_{i,j-1,k+1} $ or
$y \in B_{i-1, j, k+1} $ or
$y \in B_{i, j+1, k-1} $ or
$y \in B_{i+1, j-1, k} $ or
$y \in B_{i+1, j, k-1} $ or
$y \in B_{i-1, j+1, k} $.
\end{case}

For $x\in B_{i, j, k}$
and $y \in B_{i,j-1,k+1}$,
we assume that $xy$ is not monitored by $\{u,v,w\}$.
Then
there is a path of length $k+1$ from $y$ to $w$, say $P_{k+1}$, such that
$xy\notin E(P_{k+1})$, and
there is a path of length $j$ from $x$ to $v$, say $P_{j}$,
such that
$xy\notin E(P_{j})$.
Let $z_3,z_2$ be the neighbors of $y,x$ on the $P_{k+1},P_{j}$, respectively.
Thus, there is a $4$-path $P_4$ with
$V(P_4)=\{z_2, x, y, z_3\}$
and
$E(P_4)=\{z_2x, xy, yz_3\}$ such that $x \in  B_{i,j,k}$,
$y \in  B_{i,j-1,k+1}$,
$z_2\in B_{a,j-1, b} $, and
$z_3\in B_{a',b', k} $, where
$a, a'\in \{i-1,i, i+1\}$,
$b \in \{k-1,k, k+1\}$,
$b' \in \{j-2,j-1, j\}$.

From the conditions $(2)$
and $(3.5)$, we have
$y \in B_{i,j-1,k+1}$, and for any $z_i\in N(x)$
($2 \leq i \leq 3$), we have
$z_3 \notin \{
B_{i, j-1, k} ,
\}$,
$z_2 \notin \{
B_{i, j-1, k}
\}$, and hence $x=B_{i,j,k} $,
$y=B_{i,j-1,k+1}$,
\begin{align*}
z_2 \in &
B_{i-1,j-1,k-1}
\cup B_{i-1,j-1,k}
\cup B_{i-1,j-1,k+1}
\cup B_{i,j-1,k-1}
\cup B_{i,j-1,k+1}
\cup B_{i+1,j-1,k-1}
\cup B_{i+1,j-1,k}\\
& \cup B_{i+1,j-1,k+1},
\\
z_3 \in &
B_{i-1,j-2,k}
\cup B_{i-1,j-1,k}
\cup B_{i-1,j,k}
\cup B_{i,j-2,k}
\cup B_{i,j,k}
\cup B_{i+1,j-2,k}\\
& \cup B_{i+1,j-1,k}
\cup B_{i+1,j,k}.
\end{align*}
which contradicts the condition $(4.3)$.
If $x\in B_{i, j, k} $ and
$y \in B_{i,j-1,k+1} $,
then $xy$ can be monitored by $\{u,v,w\}$.
Similarly, the edges $xy$ can be also monitored by $\{u,v,w\}$, where
$y \in B_{i-1, j, k+1} $ or
$y \in B_{i, j+1, k-1} $ or
$y \in B_{i+1, j-1, k} $ or
$y \in B_{i+1, j, k-1} $ or
$y \in B_{i-1, j+1, k} $.

If $x\in B_{i,j,k}$,
from it follows Lemma \ref{lem-char},
that
$y \in T$,
where
$$
T=\left\{ B_{i',j',k'}\,|\,i' \in \{i-1,i,i+1\},
j' \in \{j-1,j,j+1\},
k' \in \{k-1,k,k+1\}\right\}.
$$

From the above cases, the vertex set $B_{i,j,k}(u,v,w)$ has the arbitrariness.
Then the $xy$ in $E(G_b)$
can be monitored by $\{u,v,w\}$, and hence
$\{u,v,w\}$ is a distance-edge-monitoring set in $G_b$, and so $dem(G) = 3$.
\end{proof}

\section{Conclusion}

In this paper, we have continued the study of {\it distance-edge-monitoring sets}, a new graph parameter recently introduced by Foucaud {\it et al.}~\cite{FKKMR21}, which is useful in the area of network monitoring. In particular, we have given upper and lower bounds on the parameters $P(M,e)$,
$EM(x)$, $dem(G)$, respectively, and extremal graphs attaining the bounds were characterized.
We also characterized the graphs with $dem(G)=3$.

For future work, it would be interesting to study distance-edge monitoring sets in further standard graph classes, including pyramids, Sierpi\'nki-type graphs, circulant graphs, graph products, or line graphs. In addition, characterizing the graphs with $dem(G)=n-2$ would be of interest, as well as clarifying further the relation of the parameter $dem(G)$
to other standard graph parameters, such as arboricity, vertex cover number and feedback edge set number.

\end{document}